\newcommand{\e}{\mathsf e}
\newcommand\TTTT{%
 \textsf{T\kern-0.15em\raisebox{-0.55ex}T\kern-0.15emT\kern-0.15em\raisebox{-0.55ex}2}%
}
\providecommand{\U}[1]{\protect\rule{.1in}{.1in}}
\title{
 On noncommutative generalisations of Boolean algebras}
\author[A. Bucciarelli]{A. Bucciarelli}\address{A. Bucciarelli, Universit\'e Paris Diderot}\email{buccia@irif.fr}
\author[A. Salibra]{A. Salibra}\address{A. Salibra, Universit\`a Ca'Foscari Venezia}\email{salibra@unive.it}
\date{\today}
\keywords{Skew Boolean algebras, Boolean-like algebras, Church algebras, Multideals.\\
\emph{2010 Mathematics Subject Classification.} Primary: 06E75; Secondary: 03G05, 08B05, 08A30.}
\begin{document}
\dedicatory{\large{In honor of Jonathan Leech}}
\maketitle
\theoremstyle{plain}
\newtheorem{theorem}{Theorem}[section]
\newtheorem{lemma}[theorem]{Lemma}
\newtheorem{corollary}[theorem]{Corollary}
\newtheorem{proposition}[theorem]{Proposition}
\newtheorem{claim}[theorem]{Claim}
\theoremstyle{remark}
\newtheorem{remark}{Remark}
\newtheorem{algorithm}{Algorithm}
\newtheorem{axiom}{Axiom}
\newtheorem{case}{Case}
\newtheorem{conclusion}{Conclusion}
\newtheorem{condition}{Condition}
\newtheorem{conjecture}{Conjecture}
\theoremstyle{definition}
\newtheorem{definition}{Definition}[section]
\newtheorem{notation}[definition]{Notation}
\newtheorem{criterion}[definition]{Criterion}
\newtheorem{example}{Example}
\newtheorem{problem}{Problem}
\newtheorem{exercise}{Exercise}
\newtheorem{solution}{Solution}
\newtheorem{acknowledgement}{Acknowledgement}
\newtheorem{summary}{Summary}

\newcommand{\FL}{{\text{FL}}}
\newcommand{\CRL}{{\text{CRL}}}
\newcommand{\RL}{{\text{RL}}}
\newcommand{\oor}{{\overline{\vee}}}

\newcommand{\impl}{\Rightarrow}
\newcommand{\Ra}{\Rightarrow}
\newcommand{\La}{\Leftarrow}
\newcommand{\rsa}{\rightsquigarrow}
\newcommand{\lsa}{\leftsquigarrow}
\newcommand{\thra}{\twoheadrightarrow}
\newcommand{\thla}{\twoheadleftarrow}
\newcommand{\rest}{\upharpoonleft}
\newcommand{\bigsupr}{\bigsqcup}
\newcommand{\biginfr}{\bigsqcap}
\newcommand{\supr}{\sqcup}
\newcommand{\infr}{\sqcap}
\newcommand{\sqleq}{\sqsubseteq}
\newcommand{\sub}{\subseteq}
\newcommand{\emp}{\emptyset}
\newcommand{\lam}{\mathbb{\leftthreetimes}}
\newcommand{\lint}{\llbracket}
\newcommand{\rint}{\rrbracket}

\newcommand{\intx}{\lint x \rint_{\rho}}
\newcommand{\intca}{\lint c_a \rint_{\rho}}
\newcommand{\intpq}{\lint PQ \rint_{\rho}}
\newcommand{\intmn}{\lint MN \rint_{\rho}}
\newcommand{\intm}{\lint M \rint_{\rho}}
\newcommand{\intn}{\lint N \rint_{\rho}}
\newcommand{\intp}{\lint P \rint_{\rho}}
\newcommand{\intq}{\lint Q \rint_{\rho}}
\newcommand{\intlxp}{\lint \lambda x.P \rint_{\rho}}
\newcommand{\intlxm}{\lint \lambda x.M \rint_{\rho}}
\newcommand{\intlxn}{\lint \lambda x.N \rint_{\rho}}
\newcommand{\intlyp}{\lint \lambda y.P \rint_{\rho}}
\newcommand{\last}{\lambda^*x}
\newcommand{\lx}{\lambda^*x}
\newcommand{\ly}{\lambda^*y}
\newcommand{\lxn}{\lambda^*x_1\ldots x_n}

\newcommand{\mfA}{\mathfrak{A}}
\newcommand{\mfB}{\mathfrak{B}}
\newcommand{\mfC}{\mathfrak{C}}
\newcommand{\mfD}{\mathfrak{D}}
\newcommand{\mfE}{\mathfrak{E}}
\newcommand{\mfF}{\mathfrak{F}}
\newcommand{\mfG}{\mathfrak{G}}
\newcommand{\mfH}{\mathfrak{H}}
\newcommand{\mfI}{\mathfrak{I}}
\newcommand{\mfJ}{\mathfrak{J}}
\newcommand{\mfK}{\mathfrak{K}}
\newcommand{\mfL}{\mathfrak{L}}
\newcommand{\mfM}{\mathfrak{M}}
\newcommand{\mfN}{\mathfrak{N}}
\newcommand{\mfO}{\mathfrak{O}}
\newcommand{\mfP}{\mathfrak{P}}
\newcommand{\mfQ}{\mathfrak{Q}}
\newcommand{\mfR}{\mathfrak{R}}
\newcommand{\mfS}{\mathfrak{S}}
\newcommand{\mfT}{\mathfrak{T}}
\newcommand{\mfU}{\mathfrak{U}}
\newcommand{\mfV}{\mathfrak{V}}
\newcommand{\mfW}{\mathfrak{W}}
\newcommand{\mfX}{\mathfrak{X}}
\newcommand{\mfY}{\mathfrak{Y}}
\newcommand{\mfZ}{\mathfrak{Z}}

\newcommand{\mcA}{\mathcal{A}}
\newcommand{\mcB}{\mathcal{B}}
\newcommand{\mcC}{\mathcal{C}}
\newcommand{\mcD}{\mathcal{D}}
\newcommand{\mcE}{\mathcal{E}}
\newcommand{\mcF}{\mathcal{F}}
\newcommand{\mcG}{\mathcal{G}}
\newcommand{\mcH}{\mathcal{H}}
\newcommand{\mcI}{\mathcal{I}}
\newcommand{\mcJ}{\mathcal{J}}
\newcommand{\mcK}{\mathcal{K}}
\newcommand{\lang}{\mathcal{L}}
\newcommand{\mcL}{\mathcal{L}}
\newcommand{\mcM}{\mathcal{M}}
\newcommand{\mcN}{\mathcal{N}}
\newcommand{\mcO}{\mathcal{O}}
\newcommand{\mcP}{\mathcal{P}}
\newcommand{\mcQ}{\mathcal{Q}}
\newcommand{\mcR}{\mathcal{R}}
\newcommand{\mcS}{\mathcal{S}}
\newcommand{\mcT}{\mathcal{T}}
\newcommand{\mcU}{\mathcal{U}}
\newcommand{\mcV}{\mathcal{V}}
\newcommand{\mcW}{\mathcal{W}}
\newcommand{\mcX}{\mathcal{X}}
\newcommand{\mcY}{\mathcal{Y}}
\newcommand{\mcZ}{\mathcal{Z}}

\newcommand{\mcp}{\mathcal{a}}
\newcommand{\mcq}{\mathcal{q}}
\newcommand{\mcr}{\mathcal{r}}
\newcommand{\mcs}{\mathcal{s}}

\newcommand{\mbA}{\mathbf{A}}
\newcommand{\mbB}{\mathbf{B}}
\newcommand{\mbC}{\mathbf{C}}
\newcommand{\mbD}{\mathbf{D}}
\newcommand{\mbE}{\mathbf{E}}
\newcommand{\mbF}{\mathbf{F}}
\newcommand{\mbG}{\mathbf{G}}
\newcommand{\mbH}{\mathbf{H}}
\newcommand{\mbI}{\mathbf{I}}
\newcommand{\mbJ}{\mathbf{J}}
\newcommand{\mbK}{\mathbf{K}}
\newcommand{\mbL}{\mathbf{L}}
\newcommand{\mbM}{\mathbf{M}}
\newcommand{\mbN}{\mathbf{N}}
\newcommand{\mbO}{\mathbf{O}}
\newcommand{\mbP}{\mathbf{P}}
\newcommand{\mbQ}{\mathbf{Q}}
\newcommand{\mbR}{\mathbf{R}}
\newcommand{\mbS}{\mathbf{S}}
\newcommand{\mbT}{\mathbf{T}}
\newcommand{\mbU}{\mathbf{U}}
\newcommand{\mbV}{\mathbf{V}}
\newcommand{\mbW}{\mathbf{W}}
\newcommand{\mbX}{\mathbf{X}}
\newcommand{\mbY}{\mathbf{Y}}
\newcommand{\mbZ}{\mathbf{Z}}

\newcommand{\mbbA}{\mathbb{A}}
\newcommand{\mbbB}{\mathbb{B}}
\newcommand{\mbbC}{\mathbb{C}}
\newcommand{\mbbD}{\mathbb{D}}
\newcommand{\mbbE}{\mathbb{E}}
\newcommand{\mbbF}{\mathbb{F}}
\newcommand{\mbbG}{\mathbb{G}}
\newcommand{\mbbH}{\mathbb{H}}
\newcommand{\mbbI}{\mathbb{I}}
\newcommand{\mbbJ}{\mathbb{J}}
\newcommand{\mbbK}{\mathbb{K}}
\newcommand{\mbbL}{\mathbb{L}}
\newcommand{\mbbM}{\mathbb{M}}
\newcommand{\mbbN}{\mathbb{N}}
\newcommand{\mbbO}{\mathbb{O}}
\newcommand{\mbbP}{\mathbb{P}}
\newcommand{\mbbQ}{\mathbb{Q}}
\newcommand{\mbbR}{\mathbb{R}}
\newcommand{\mbbS}{\mathbb{S}}
\newcommand{\mbbT}{\mathbb{T}}
\newcommand{\mbbU}{\mathbb{U}}
\newcommand{\mbbV}{\mathbb{V}}
\newcommand{\mbbW}{\mathbb{W}}
\newcommand{\mbbX}{\mathbb{X}}
\newcommand{\mbbY}{\mathbb{Y}}
\newcommand{\mbbZ}{\mathbb{Z}}

\newcommand{\mba}{\mathbf{a}}
\newcommand{\mbb}{\mathbf{b}}
\newcommand{\mbc}{\mathbf{c}}
\newcommand{\mbd}{\mathbf{d}}
\newcommand{\mbe}{\mathbf{e}}
\newcommand{\mbf}{\mathbf{f}}
\newcommand{\mbg}{\mathbf{g}}
\newcommand{\mbh}{\mathbf{h}}
\newcommand{\mbi}{\mathbf{i}}
\newcommand{\mbj}{\mathbf{j}}
\newcommand{\mbk}{\mathbf{k}}
\newcommand{\mbl}{\mathbf{l}}
\newcommand{\mbm}{\mathbf{m}}
\newcommand{\mbn}{\mathbf{n}}
\newcommand{\mbo}{\mathbf{o}}
\newcommand{\mbp}{\mathbf{p}}
\newcommand{\mbq}{\mathbf{q}}
\newcommand{\mbr}{\mathbf{r}}
\newcommand{\mbs}{\mathbf{s}}
\newcommand{\mbt}{\mathbf{t}}
\newcommand{\mbu}{\mathbf{u}}
\newcommand{\mbv}{\mathbf{v}}
\newcommand{\mbw}{\mathbf{w}}
\newcommand{\mbx}{\mathbf{x}}
\newcommand{\mby}{\mathbf{y}}
\newcommand{\mbz}{\mathbf{z}}

\newcommand{\msA}{\mathscr{A}}
\newcommand{\msB}{\mathscr{B}}
\newcommand{\msC}{\mathscr{C}}
\newcommand{\msD}{\mathscr{D}}
\newcommand{\msE}{\mathscr{E}}
\newcommand{\msF}{\mathscr{F}}
\newcommand{\msG}{\mathscr{G}}
\newcommand{\msH}{\mathscr{H}}
\newcommand{\msI}{\mathscr{I}}
\newcommand{\msJ}{\mathscr{J}}
\newcommand{\msK}{\mathscr{K}}
\newcommand{\msL}{\mathscr{L}}
\newcommand{\msM}{\mathscr{M}}
\newcommand{\msN}{\mathscr{N}}
\newcommand{\msO}{\mathscr{O}}
\newcommand{\msP}{\mathscr{P}}
\newcommand{\msQ}{\mathscr{Q}}
\newcommand{\msR}{\mathscr{R}}
\newcommand{\msS}{\mathscr{S}}
\newcommand{\msT}{\mathscr{T}}
\newcommand{\msU}{\mathscr{U}}
\newcommand{\msV}{\mathscr{V}}
\newcommand{\msW}{\mathscr{W}}
\newcommand{\msX}{\mathscr{X}}
\newcommand{\msY}{\mathscr{Y}}
\newcommand{\msZ}{\mathscr{Z}}

\newcommand{\nat}{\mathbb{N}}
\newcommand{\stdmod}{\underline{\underline{\mathbb{N}}}}
\newcommand{\rea}{\mathbb{R}}
\newcommand{\intg}{\mathbb{Z}}

\newcommand{\Po}{P\omega}
\newcommand{\ConA}{Con \ \mathbf{A}}
\newcommand{\Alg}{\mathbf{A}}
\newcommand{\KFree}{\mathbf{\overline{F}_K(\overline{X})}}
\newcommand{\AbFree}{\mathbf{\overline{F}(X)}}
\newcommand{\Tsig}{\mathbf{T_{\Sigma}(X)}}
\newcommand{\LT}{\mathbf{\lambda T}}
\newcommand{\TP}{\mathbf{T(P)}}
\newcommand{\FP}{\mathbf{F(P)}}
\newcommand{\TM}{\mathbf{\Lambda / \mcT}}
\newcommand{\TSV}{\mathbf{T_\Sigma(V)}}

\newcommand*{\sqsubsim}{%
\mathrel{\vcenter{\offinterlineskip 
\hbox{$\sqsubset$}\vskip-.130ex\hbox{$\sim$}}}}

\newcommand{\sso}{{\mathbf{1}}}
\newcommand{\osso}{{\overline{\mathbf{1}}}}
\newcommand{\ssz}{{\mathbf{0}}}
\newcommand{\ossz}{{\overline{\mathbf{0}}}}

\newcommand{\parr}{{\bindnasrepma}}
\newcommand{\tensor}{{\varotimes}}
\newcommand{\with}{{\binampersand}}
\newcommand{\plus}{{\oplus}}
\newcommand{\mm}{{\multimap}}

\newcommand{\ot}{\leftarrow}

\newcommand{\ga}{\alpha}
\newcommand{\gb}{\beta}
\newcommand{\gc}{\gamma}
\newcommand{\gd}{\delta}
\newcommand{\gep}{\varepsilon}
\newcommand{\gz}{\zeta}
\newcommand{\geta}{\eta}
\newcommand{\gth}{\theta}
\newcommand{\gi}{\iota}
\newcommand{\gv}{\nu}
\newcommand{\gk}{\kappa}
\newcommand{\gl}{\lambda}
\newcommand{\gm}{\mu}
\newcommand{\gn}{\nu}
\newcommand{\gx}{\xi}
\newcommand{\gp}{\pi}
\newcommand{\gr}{\rho}
\newcommand{\gs}{\sigma}
\newcommand{\gt}{\tau}
\newcommand{\gu}{\upsilon}
\newcommand{\gph}{\varphi}
\newcommand{\gch}{\chi}
\newcommand{\gps}{\psi}
\newcommand{\go}{\omega}
\newcommand{\gS}{\Sigma}
\newcommand{\gL}{\Lambda}

\newcommand{\no}{{\sim\!}}
\newcommand{\la}{{\langle}}
\newcommand{\ra}{{\rangle}}

\newcommand\oa{{\overline{a}}}
\newcommand\ob{{\overline{b}}}
\newcommand\oc{{\overline{c}}}
\newcommand\od{{\overline{d}}}
\newcommand\ox{{\overline{x}}}
\newcommand\oy{{\overline{y}}}
\newcommand\ou{{\overline{u}}}
\newcommand\ov{{\overline{v}}}
\newcommand\ow{{\overline{w}}}
\newcommand\oz{{\overline{z}}}
\newcommand\oZ{{\overline{Z}}}
\newcommand\oQ{{\overline{Q}}}
\newcommand\oN{{\overline{N}}}
\newcommand\oM{{\overline{M}}}
\newcommand\oR{{\overline{R}}}
\newcommand\oX{{\overline{X}}}
\newcommand\ogx{{\overline{\gx}}}

\begin{abstract}
Skew Boolean algebras (skew BA) and  Boolean-like algebras ($n$BA) are one-pointed and $n$-pointed noncommutative
  generalisation of Boolean algebras, respectively. We show that 
  any $n$BA is a cluster of $n$ isomorphic right-handed skew BAs, axiomatised here as the variety of skew star algebras. 
The variety of skew star algebras is shown to be term equivalent to the variety of $n$BAs. 
We use  skew BAs in order to develop
a general theory of multideals for $n$BAs.
We also provide a representation theorem for right-handed skew BAs in terms of $n$BAs of $n$-partitions.
\end{abstract}

\section{Introduction}

Boolean algebras are the main example of a well-behaved double-pointed variety - meaning a variety $\mathcal V$ whose type includes two distinct constants $0, 1$ in every nontrivial $A\in \mathcal V$.
Since there are other double-pointed varieties of algebras that have Boolean-like features, in \cite{first,LPS13}   the notion of Boolean-like algebra (of dimension $2$) was introduced as a generalisation of Boolean algebras to a double-pointed but otherwise arbitrary similarity type. The idea behind this approach was that a Boolean-like algebra of dimension $2$ is an algebra $A$ such that every $a\in A$ is $2$-central in the sense of Vaggione \cite{vaggione}, meaning that $\theta(a, 0)$ and $\theta(a, 1)$ are complementary factor congruences of $A$. Central elements can be given an equational characterisation through the ternary operator $q$ satisfying the fundamental properties of the if-then-else connective. It also turns out that some important properties of Boolean algebras are shared  by $n$-pointed algebras whose elements satisfy all the equational conditions of $n$-central elements through an operator $q$ of arity $n+1$ satisfying the fundamental properties of a generalised if-then-else connective. These algebras, and the varieties they form, were termed {\em Boolean-like algebras of dimension $n$} ($n$BA, for short) in \cite{BLPS18}.

Varieties of $n\mathrm{BA}$s share many remarkable
properties with the variety of Boolean algebras. In particular, any variety of  $n\mathrm{BA}$s 
 is generated by the $n\mathrm{BA}$s of finite
cardinality $n$.  In the pure case (i.e., when the type includes just the generalised
if-then-else $q$ and the $n$ constants $\e_1,\dots,\e_n$), the variety is generated by a unique
algebra $\mathbf{n}$ of universe $\{\e_1,\dots,\e_n\}$, 
so that any pure $n\mathrm{BA}$ is, up to isomorphism, a subalgebra  of $\mathbf{n}^I$, for a suitable set $I$. Another remarkable property of the $2$-element Boolean algebra is the definability of all finite Boolean functions in terms of the connectives {\sc and, or, not}. This property is inherited by the algebra $\mathbf{n}$:  all finite functions on $\{\e_1,\dots,\e_n\}$ are term-definable, so that the variety of pure $n\mathrm{BA}$s is primal. More generally, a variety of an arbitrary type with one generator  is primal if and only if it is a variety of $n\mathrm{BA}$s.
Just like Boolean algebras are the algebraic
counterpart of classical logic $\mathrm{CL}$,
 $n\mathrm{BA}$s are the algebraic
counterpart of a logic $n\mathrm{CL}$.
The complete symmetry of the truth values ${\e}_1,\dots,{\e}_n$, supports the idea that $n\mathrm{CL}$ is the right generalisation of classical logic from dimension $2$ to dimension $n$.
Any tabular logic with $n$ truth values can be conservatively embedded into  $n\mathrm{CL}$.
All the  results described above are shown in \cite{BLPS18}.

Lattices and boolean algebras have been generalized in other directions:
in the last decades
weakenings of lattices where the meet and join operations may fail to be commutative have attracted  the attention of various researchers.
A noncommutative generalisation of lattices, probably the most
interesting and successful, is the concept of \emph{skew lattice }\cite{Lee89} along with the related notion of \emph{skew Boolean algebra} (the interested reader is
referred to \cite{Lee,BL,Leech} or \cite{Spi} for a comprehensive account).
Skew Boolean algebras are non-commutative one-pointed generalisations of Boolean algebras.
The significance of skew Boolean algebras is revealed by a result of Leech \cite{Leech}, stating that any right-handed skew Boolean algebra can be embedded into  some skew Boolean algebra of partial functions. This result has been revisited and further explored in \cite{BKCV} and \cite{Kudr}, showing  that any skew Boolean algebra is dual to a sheaf over a locally-compact Boolean space.  
Skew Boolean algebras are also closely related to discriminator varieties (see \cite{BL,CS15} for the one-pointed case and \cite{first} for the double-pointed one).  


In this paper we establish connections between skew Boolean algebras and 
 Boolean-like algebras of dimension $n$.
We prove that any  $n$BA $\mathbf A$ contains a symmetric cluster of right-handed skew Boolean algebras $S_{1}(\mathbf A),\dots, S_{n}(\mathbf A)$, called its {\em skew reducts}.  
Interestingly, every permutation $\gs$ of the symmetric group $S_n$ determines a bunch of isomorphisms
$$\mathrm{S}_1(\mathbf A) \cong \mathrm{S}_{\gs 1}(\mathbf A)\qquad\dots\qquad \mathrm{S}_n(\mathbf A) \cong \mathrm{S}_{\gs n}(\mathbf A)$$
which shows the inner symmetry of the $n$BAs.
The skew reducts of a $n$BA are so deeply correlated that they allow to recover the full structure of the $n$BA. Then we  introduce a new variety of algebras, called {\em skew star} algebras, equationally axiomatising 
$n$ skew Boolean algebras and their relationships, and we prove that it is term equivalent to the variety of $n$BAs.
We also provide a representation theorem for right-handed skew Boolean algebras in terms of $n$BAs of $n$-partitions.

The notion of ideal plays an important role
in order theory and universal algebra. Ideals, filters
and congruences are interdefinable in Boolean algebras.
In the case of $n$BAs, the couple ideal-filter  is replaced by {\em multideals},
which are tuples 
$(I_1,\ldots,I_n)$ of disjoint skew Boolean ideals satisfying some compatibility conditions that extend in a conservative way those of the Boolean case.
We show that there exists a bijective correspondence between multideals and
congruences on  $n$BAs,  rephrasing the well known correspondence of the Boolean case.
The proof of this result makes an essential use of the notion of
a coordinate, originally defined in  \cite{BLPS18} and rephrased here in terms of the  operations of the skew reducts. Any element $x$ of a $n$BA $\mathbf A$ univocally determines a $n$-tuple
of elements of the canonical inner Boolean algebra $B$ of $\mathbf A$, its coordinates,
 codifying $x$  as a ``linear combination''.
In the Boolean case, there is a bijective correspondence between maximal ideals and
homomorphisms onto $\mathbf 2$. In the last section of the paper we show that every multideal can be extended
to an ultramultideal, and that there exists  a bijective correspondence between ultramultideals and homomorphisms onto $\mathbf n$. Moreover, ultramultideals are proved to be exactly the prime multideals.


\section{Preliminaries\label{benzina}}
The notation and terminology in this paper are pretty standard. For
concepts, notations and results not covered hereafter, the reader is
referred to \cite{BS,MMT87} for universal algebra, to \cite{Lee,Leech,Spi} for skew BAs and to \cite{first,LPS13,BLPS18} for $n$BAs. 
Superscripts that mark the difference between operations and operation symbols will be dropped whenever the context is sufficient for a disambiguation. 

If $\tau$ is an algebraic type, an algebra $\mathbf{A}$ of type $\tau $ is
called \emph{a }$\tau $\emph{-algebra}, or simply an algebra when $\tau $ is
clear from the context. An algebra is \emph{trivial} if its carrier set is a singleton set.

$\mathrm{Con}(\mathbf{A})$ is the lattice of all
congruences on $\mathbf{A}$, whose bottom and top elements are,
respectively, $\Delta =\{(x,x):x\in A\}$ and $\nabla =A\times A$. 
Given $a,b\in A$, we write $\theta(a,b)$ for the smallest congruence $\theta$
such that $( a,b) \in \theta $.

We say that an algebra $\mathbf{A}$ is: (i) \emph{subdirectly irreducible}
if the lattice $\mathrm{Con}(\mathbf{A})$ has a unique atom; (ii) \emph{simple} if $\mathrm{Con}(\mathbf{A})=\{\Delta ,\nabla \}$; (iii) \emph{directly indecomposable} if 
$\mathbf{A}$ is not isomorphic to a direct product of two nontrivial algebras.

A class $\mathcal{V}$ of $\tau $-algebras is a \emph{variety} (equational class) if it is closed
under subalgebras, direct products and homomorphic images. 
If $K$ is a class of $\tau$-algebras, the variety $\mathcal V(K)$ generated by $K$ is the smallest variety including $K$. If $K=\{\mathbf A\}$ we write $\mathcal V(\mathbf A)$ for $\mathcal V(\{\mathbf A\})$.

\subsubsection{Notations}\label{notation}
If $A$ is a set and $X\subseteq A$, then $\bar X$ denotes the set $A\setminus X$.

Let $\hat n=\{1,\dots,n\}$ and $q$ be an operator of arity $n+1$.
If $d_1,\ldots,d_k$ is a partition of $\hat n$ and $x,y_1\ldots,y_k$ are  elements,
then
$$q(x, y_1/d_1,\ldots,y_k/d_k)$$
denotes  $q(x, z_1,\ldots,z_n)$ where for all $1\leq i\leq n$, $z_i=y_j$ iff
$i\in d_j$.

\noindent If $d_j$ is a singleton $\{i\}$, then we write $y/i$ for $y/d_j$.

\noindent If $d_i=\hat n\setminus d_r$ is a the complement of $d_r$, then we may  write $y/\bar d_r$ for $y/d_i$.




\subsection{Factor Congruences and Decomposition}
Directly indecomposable algebras play an important role in the characterisation of the structure of a variety of
algebras.
For example, if the class
of  indecomposable algebras in a Church variety (see Section \ref{dobbiaco} and \cite{first}) is universal, then any algebra in the variety is
a weak Boolean product of  directly indecomposable algebras.
In this section we summarize the basic ingredients of factorisation:
tuples of complementary factor congruences
and  decomposition operators (see  \cite{MMT87}) .

\begin{definition}
\label{thm:cong} A sequence $(\phi_{1},\dots,\phi_n)$ of congruences on a $\tau$-algebra $\mathbf{A}$ is a $n$-tuple of complementary factor congruences exactly when:
\begin{enumerate}
\item $\bigcap_{1\leq i\leq n}\phi_{i}=\Delta$;

\item $\forall (a_1,\dots,a_n)\in A^n$, there is $u\in A$ such that $a_i\phi_{i}\,u$,
for all $1\leq i\leq n$.
\end{enumerate}
\end{definition}

If $(\phi_{1},\dots,\phi_n)$  is a $n$-tuple of complementary factor congruences on $\mathbf{A}
$, then the function
$f:\mathbf{A}\rightarrow \prod\limits_{i=1}^n\mathbf{A}/\phi _{i}$,
defined by $f(a) =(a/\phi _{1},\dots,a/\phi _{n})$, is an
isomorphism. Moreover, every factorisation of $\mathbf A$ in $n$ factors univocally determines a $n$-tuple of complementary factor congruences.

A pair  $(\phi_{1},\phi_{2})$ of congruences is a pair of complementary factor
congruences if and only if $\phi_{1}\cap\phi_{2}=\Delta$ and $\phi_{1}\circ\phi_{2}=\nabla$. 
The pair $(\Delta,\nabla)$ corresponds to the product $\mathbf{A}\cong\mathbf{A}\times\mathbf{1}$, where $\mathbf{1}$ is a trivial algebra; obviously $\mathbf{1}\cong\mathbf{A}/\nabla$ and $\mathbf{A}\cong\mathbf{A}/\Delta$. 

A \emph{factor congruence} is any congruence which belongs to a pair of
complementary factor congruences. 
The set of factor congruences of $\mathbf A$ is not, in general, a sublattice of $\mathrm{Con}(\mathbf{A})$.

Notice that, if $(\phi_{1},\dots,\phi_n)$ is a $n$-tuple of complementary factor congruences, then $\phi_i$ is a factor congruence for each $1\leq i\leq n$, because
the pair $(\phi_i, \bigcap_{j\neq i} \phi_j)$ is a pair of complementary factor congruences. 

It is possible to characterise $n$-tuple of complementary factor congruences in terms of certain algebra
homomorphisms called \emph{decomposition operators} (see \cite[Def.~4.32]{MMT87} for additional details). 

\begin{definition}
\label{def:decomposition} 
An \emph{$n$-ary decomposition operator} on an algebra $\mathbf{A}$ is a function $f:A^{n}\rightarrow A$ satisfying the following identities:
conditions: 
\begin{description}
\item[D1] $f( x,x,\dots,x) =x$;
\item[D2] $f( f( x_{11},x_{12},\dots,x_{1n}),\dots,f(x_{n1},x_{n2},\dots,x_{nn}))=f(x_{11},\dots,x_{nn})$;
\item[D3] $f$ is an algebra homomorphism from $\mathbf{A}^n$ to $ \mathbf{A}$:\\
$f(g(x_{11},x_{12},\dots,x_{1k}),\dots, g(x_{n1},x_{n2},\dots,x_{nk})) = g(f(x_{11},\dots,x_{n1}),\dots, f(x_{1k},\dots,x_{nk}))$, for every $g\in \gt$ of arity $k$.
\end{description}
\end{definition}

There is a bijective correspondence between $n$-tuples of complementary factor
congruences and $n$-ary decomposition operators, and thus, between $n$-ary decomposition
operators and factorisations of an algebra in $n$ factors.

\begin{theorem}
\label{prop:pairfactor} Any $n$-ary decomposition operator $f:\mathbf{A}^{n}\rightarrow \mathbf{A}$ on an algebra $\mathbf{A}$ induces a $n$-tuple of
complementary factor congruences $\phi _{1},\dots,\phi _{n}$, where each $\phi _{i}\subseteq A\times A$ is defined by: 
\begin{equation*}
a\ \phi _{i}\ b\ \ \text{iff}\ \ f(a,\dots,a,b,a,\dots,a)=a\qquad(\text{$b$ at position $i$}).
\end{equation*}
Conversely, any $n$-tuple $\phi _{1},\dots,\phi _{n}$ of complementary factor
congruences induces a decomposition operator $f$ on $\mathbf{A}$: 
$f(a_1,\dots,a_n)=u$ iff $a_{i}\,\phi _{i}\,u$, for all $i$, where such an element $u$ is unique.
\end{theorem}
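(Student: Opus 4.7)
The plan is to prove the two directions of the correspondence separately and then verify that the two constructions are mutually inverse. The workhorse for the ``operator to congruences'' direction will be a single identity obtained from D2 by applying it to a near-constant matrix: taking rows $k \ne i$ to be $\vec b = (b_1,\dots,b_n)$ and row $i$ to be $\vec a = (a_1,\dots,a_n)$, the diagonal is $(b_1,\dots,b_{i-1},a_i,b_{i+1},\dots,b_n)$, so D2 yields
\begin{equation*}
f(b_1,\dots,b_{i-1},a_i,b_{i+1},\dots,b_n) \;=\; f(f(\vec b),\dots,f(\vec b),f(\vec a),f(\vec b),\dots,f(\vec b))\tag{$\star$}
\end{equation*}
with $f(\vec a)$ in position $i$. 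Specialising $\vec b=(u,\dots,u)$ with $u=f(\vec a)$ and invoking D1 will produce the \emph{coordinate lemma}: $f(\vec a)\,\phi_i\,a_i$ for every $i$ and every tuple.

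For the first direction, given a decomposition operator $f$, I will check that each $\phi_i$ is a congruence and that the family is complementary. Reflexivity is D1. For symmetry, starting from $a\phi_i b$, I will feed $(\star)$ with $\vec a=(a,\dots,b,\dots,a)$ (so $f(\vec a)=a$ by the hypothesis) and $\vec b=(b,\dots,b)$ (so $f(\vec b)=b$ by D1): the left-hand side collapses to $b$ by D1 while the right-hand side is $f(b,\dots,a,\dots,b)$, yielding $b\phi_i a$. Transitivity will be an analogous two-step instance of $(\star)$ using both hypotheses, and compatibility with each fundamental operation $g$ is a one-line consequence of D3. To show $\bigcap_i\phi_i=\Delta$, I will apply D2 to the matrix whose $j$-th row is $(a,\dots,a,b,a,\dots,a)$ with $b$ at position $j$: every row evaluates to $a$ under the hypothesis $a\phi_j b$, yet the diagonal is $(b,\dots,b)$, forcing $a=b$. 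For the existence condition of Definition 2.1, $u=f(a_1,\dots,a_n)$ will witness $a_i\phi_i u$ for every $i$ via the coordinate lemma and symmetry.

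For the reverse direction, I will define $f(a_1,\dots,a_n)$ to be the unique $u$ supplied by conditions (1)--(2) of Definition 2.1. D1 is immediate (take $u=a$). D2 follows from uniqueness, since both sides are the unique element $\phi_i$-related to $x_{ii}$ for every $i$. D3 follows from uniqueness together with the fact that each $\phi_i$, being a congruence, commutes with $g$. Finally, I will verify that the constructions are mutually inverse: the coordinate lemma identifies the $f$ rebuilt from the derived $\phi_i$'s with the original $f$, and the reverse reconstruction is pinned down by the same uniqueness principle. The main obstacle will be spotting the correct specialisation of $(\star)$ that forces symmetry of $\phi_i$; once this instance is isolated, the remainder of the proof reduces to routine verifications.
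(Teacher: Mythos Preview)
The paper does not actually prove this theorem: it is stated without proof as a background result from universal algebra, with the surrounding discussion pointing to \cite{MMT87} for details. There is therefore nothing to compare against.

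Your argument is correct. The identity $(\star)$ is the right organising tool, and each specialisation you describe does what you claim: the coordinate lemma follows by taking $\vec b$ constant equal to $f(\vec a)$; symmetry goes through because with $\vec a=(a,\dots,b,\dots,a)$ the diagonal entry $a_i$ equals $b$, so the right-hand side of $(\star)$ is $f(b,\dots,b)=b$ while the left-hand side is $f(b,\dots,a,\dots,b)$; transitivity needs only a single application of $(\star)$ with $\vec a=(b,\dots,c,\dots,b)$ and $\vec b=(a,\dots,a)$ (not really ``two-step'', but your sketch is fine); and your diagonal-matrix instance for $\bigcap_i\phi_i=\Delta$ is exactly right. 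The converse direction and the verification that the two constructions are mutually inverse are routine, as you say, and your outline covers all the necessary points.
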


\subsection{Factor Elements}\label{sec:factor}
The notion of decomposition operator and  of factorisation can sometimes be internalised: some elements of the algebra,
the so called factor elements, 
can embody all the information codified by a decompostion operator.


Let $\mathbf A$ be a $\tau$-algebra, where we distinguish a $(n+1)$-ary term operation $q$.

\begin{definition}
  We say that an element $e$ of $\mathbf A$ is a \emph{factor element w.r.t. $q$} if the $n$-ary operation $f_e:A^n\to A$, defined by
$$f_e(a_1,\dots,a_n)= q^{\mathbf A}(e,a_1,\dots,a_n),\ \text{for all $a_i\in A$},$$
is a $n$-ary decomposition operator (that is, $f_e$ satisfies identities (D1)-(D3) of Definition \ref{def:decomposition}).
\end{definition}




An element $e$ of $\mathbf{A}$ is a factor element if, and only if, the tuple of relations $(\phi_1,\dots,\phi_n)$, defined by
$a\ \phi _{i}\ b$ iff $q(e,a,\dots,a,b,a,\dots,a)=a$ ($b$ at position $i$),
constitute a $n$-tuple of complementary factor congruences of $\mathbf{A}$.

By \cite[Proposition 3.4]{CS15} the set  of factor elements is closed under the operation $q$: if $a,b_1,\dots,b_n\in A$ are factor elements, then $q(a,b_1,\dots,b_n)$ is also a factor element.

We notice that 
\begin{itemize}
\item different factor elements may define the same tuple of complementary factor congruences.
\item there may exist $n$-tuples of complementary factor congruences that do not correspond to any factor element.
\end{itemize}
In Section \ref{trallallero} we describe a class of algebras, called Church algebras of dimension $n$, where the $(n+1)$-ary operator $q$ induces a bijective correspondence between a suitable subset of factor elements, the so-called $n$-central elements,  and the set of all $n$-ary decomposition operators. 
 
\subsection{Skew Boolean Algebras}\label{skewskew}
We  review here some basic definitions and results on  \emph{skew lattices} \cite{Lee89} and \emph{skew Boolean algebras} \cite{Leech}.

A \emph{skew lattice} is an algebra $\mathbf{A}=(A,\lor,\land)$ of type $(2,2)$,
where both $\lor$ and $\land$ are associative, idempotent binary operations, connected by the absorption law:
$x\lor (x\land y)= x= x\land (x\lor y)$; and  $(y\land x)\lor x=
x= (y\lor x)\land x$.
%
%
%
The absorption condition is equivalent to the
following pair of biconditionals:
$\text{$x\lor y= y$ iff $x\land y= x$; and $x\lor y= x$ iff $x\land y= y$%
}$.

In any skew lattice we define the following relations: 
\begin{enumerate}
\item $x\leq y$ iff $x\land y=x=y\land x$.
\item $x\preceq y$ iff $x\land y\land x=x$.
\item $x\preceq_{l}y$ iff $x\land y=x$.
\item $x\preceq_{r}y$ iff $y\land x=x$.
\end{enumerate}
The relation $\leq$ is a partial ordering, while the relations $\preceq, \preceq_{l}, \preceq_{r}$ are preorders. Observe that 
\begin{itemize}
\item[(i)]   The equivalence induced by $\preceq$, denoted as $\mathcal{D}$, is in
fact a congruence, and $\mathbf{A}/\mathcal{D}$ is the maximal lattice image of $\mathbf{A}$. 
\item[(ii)]  The equivalences $\mathcal{L}$ and $\mathcal{R}$, respectively induced by
$\preceq_l$ and $\preceq_r$, are again congruences; moreover, $\mathcal{L}$ is
the minimal congruence making $\mathbf{A}/\mathcal{L}$ a right-zero skew lattice (we recall that a skew lattice $\mathbf{A}$ is called \emph{right-zero} if  $a\land b=b$ for all $a,b\in A$). Similarly for $\mathbf{A}/\mathcal{R}$ and left-zero skew lattices.
\end{itemize}

A skew lattice is \emph{left-handed}  (\emph{right-handed}) if $\mathcal{L}=\mathcal{D}$ ($\mathcal{R}=\mathcal{D})$.
The following conditions are equivalent for a skew lattice $\mathbf{A}$:
(a) $\mathbf{A}$ is right-handed (left-handed); (b) for all $a,b\in A$, $a\land b\land a=b\land a$ ($a\land b\land a=a\land b$).



If we expand skew lattices by a subtraction operation and a constant $0$, we
get the following noncommutative variant of Boolean algebras (see \cite{Leech}).

\begin{definition}
A \emph{skew $\mathrm{BA}$} is an algebra $\mathbf{A}=(A,\lor ,\land ,\setminus,0)$ of
type $(2,2,2,0)$ such that:

\begin{itemize}
\item[(S1)] its reduct $(A,\lor ,\land)$ is a skew lattice satisfying the
identities 
\begin{itemize}
\item Normality: $x\land y\land z\land x= x\land z\land y\land x$;
\item Distributivity: $x\land (y\lor z)= (x\land y)\lor (x\land z)$ and $(y\lor z)\land x= (y\land x)\lor (z\land x)$;
\end{itemize}
\item[(S2)] $0$ is left and right absorbing w.r.t. multiplication;

\item[(S3)] the operation $\setminus $ satisfies the identities
\begin{itemize}
\item $(x\land y\land x)\lor (x\setminus y)= x=(x\setminus y)\lor (x\land y\land x)$; 
\item $x\land y\land x\land (x\setminus y)=0=(x\setminus y)\land x\land y\land x$.
\end{itemize}
\end{itemize}
\end{definition}

It can be seen that, for every $x\in A$,  the natural partial order of the subalgebra $x\land A \land x=\{x\land y\land x: y\in A\} = \{ y: y\leq x\}$ of $\mathbf A$ is a Boolean lattice. Indeed,  the algebra $(x\land A \land x, \lor,\land,0, x, \neg)$, where $\neg y=x\setminus y$ for every $y\leq x$, is a Boolean algebra with minimum $0$ and maximum $x$.

Notice that 
\begin{itemize}
\item The normal axiom implies the commutativity of $\land$ and $\lor$ in the interval  $x\land A \land x$.
\item Axiom (S2) expresses that $0$ is the minimum of the natural partial order on $A$.
\item Axiom (S3) implies that, for every $y\in x\land A \land x$, $x\setminus y$ is the complement of $y$ in the Boolean lattice $x\land A \land x$.
\end{itemize}

A nonempty subset $I$ of a skew BA $\mathbf A$ closed under $\lor$ is a \emph{$\preceq$-ideal} of $\mathbf A$ if it satifies one of the following equivalent conditions:
\begin{itemize}
\item $x\in A$, $y\in I$ and $x\preceq y$ imply $x \in I$;
\item $x\in A$ and $y\in I$ imply $x\land y, y\land x\in I$;
\item $x\in A$ and $y\in I$ imply $x\land y\land x\in I$.
\end{itemize} 
Given a congruence $\phi$ on a skew BA, $[0]_\phi$ is a  $\preceq$-ideal. Conversely,
every  $\preceq$-ideal $I$ is the $0$-class of a unique congruence $\phi$.

\subsection{A term equivalence result for skew $\mathrm{BA}$s} In \cite{CS15} Cvetko-Vah and the second author have introduced the variety of semicentral right Church algebras (SRCA) and have shown that the
variety of right-handed skew BAs is term equivalent to the variety of SRCAs.
It is worth noticing that, in SRCAs, a single ternary operator $q$ replaces all the binary operators of skew BAs.

 An algebra $\mathbf A=(A,q,0)$ of type $(3,0)$ is called a \emph{right Church algebra} (RCA, for short) if
 it satisfies the identity $q(0,x,y)= y$.

\begin{definition} Let $\mathbf A=(A,q,0)$ be a RCA. An element $x\in A$ is called \emph{semicentral} if it is a factor element (w.r.t. $q$) satisfying $q(x,x,0)=x$. 
\end{definition}

\begin{lemma} \cite[Proposition 3.9]{CS15} Let $\mathbf A=(A,q,0)$ be an $\mathrm{RCA}$. 
Every semicentral element $e\in A$ determines a pair of complementary factor congruences:
$$\text{$\phi_e= \{ (x,y) : q(e,x,y) = x\}$ and  $\bar\phi_e=\{ (x,y) : q(e,x,y) = y\}$}$$ 
such that $\phi_e= \theta(e,0)$, the least congruence of $\mathbf A$ equating $e$ and $0$.
\end{lemma}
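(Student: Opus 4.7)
The plan is to exploit the fact that $e$ being a factor element gives, essentially for free (via Theorem \ref{prop:pairfactor}), a pair of complementary factor congruences, and then to match these with $\phi_e,\bar\phi_e$ as defined in the statement and to check the characterisation $\phi_e=\theta(e,0)$.

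First I would apply Theorem \ref{prop:pairfactor} to the binary decomposition operator $f_e(x,y)=q(e,x,y)$, which exists because $e$ is a factor element. With $n=2$, this yields a pair of complementary factor congruences $(\psi_1,\psi_2)$ defined by
\[
a\,\psi_1\,b \iff f_e(b,a)=a, \qquad a\,\psi_2\,b \iff f_e(a,b)=a,
\]
that is, $a\,\psi_2\,b$ iff $q(e,a,b)=a$, and $a\,\psi_1\,b$ iff $q(e,b,a)=a$. Comparing with the statement, $\phi_e$ is manifestly $\psi_2$. For $\bar\phi_e$ one uses that $\psi_1$, being a congruence, is symmetric: $(x,y)\in\psi_1$ iff $(y,x)\in\psi_1$ iff $q(e,x,y)=y$, which is exactly the defining condition of $\bar\phi_e$. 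Hence $(\phi_e,\bar\phi_e)=(\psi_2,\psi_1)$ is a pair of complementary factor congruences, and in particular $\phi_e$ is a congruence.

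Next I would prove $\phi_e=\theta(e,0)$ in two steps. For containment $\theta(e,0)\subseteq\phi_e$, it suffices to check $(e,0)\in\phi_e$: by the semicentrality hypothesis $q(e,e,0)=e$, which is exactly the defining condition. For minimality, let $\psi$ be any congruence with $e\,\psi\,0$ and take $(x,y)\in\phi_e$, so that $q(e,x,y)=x$. Applying $\psi$ to the first argument gives
\[
x \;=\; q(e,x,y)\;\psi\; q(0,x,y)\;=\;y,
\]
where the final equality uses the RCA identity $q(0,x,y)=y$. Thus $\phi_e\subseteq\psi$, and $\phi_e$ is indeed the least congruence equating $e$ and $0$.

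I do not foresee any serious obstacle: the semicentrality of $e$ is only used to secure $(e,0)\in\phi_e$, while the bulk of the work (reflexivity, transitivity, compatibility with all operations, and complementarity with $\bar\phi_e$) is inherited wholesale from Theorem \ref{prop:pairfactor} applied to $f_e$. The only mildly delicate point is keeping the indexing conventions of Theorem \ref{prop:pairfactor} straight and invoking symmetry of $\psi_1$ to identify it with $\bar\phi_e$; this is routine.
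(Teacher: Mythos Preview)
Your argument is correct. Note, however, that the paper does not supply its own proof of this lemma: it is stated with a citation to \cite[Proposition~3.9]{CS15} and left unproved in the text. So there is no in-paper proof to compare against.

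That said, your approach is exactly the natural one and works as written. The only point worth flagging is a small subtlety you handled correctly but might state more explicitly: when invoking Theorem~\ref{prop:pairfactor}, the decomposition operator $f_e$ must be a homomorphism $\mathbf{A}^2\to\mathbf{A}$ with respect to the full type of $\mathbf{A}$ (here $\{q,0\}$), and this is precisely what the factor-element hypothesis guarantees via axiom (D3). Everything else---the identification $\phi_e=\psi_2$, the symmetry argument giving $\bar\phi_e=\psi_1$, and the two-inclusion proof of $\phi_e=\theta(e,0)$ using $q(e,e,0)=e$ and the RCA axiom $q(0,x,y)=y$---is clean and complete.
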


\begin{definition}
 An algebra $\mathbf A=(A,q,0)$ of type $(3,0)$ is called a \emph{semicentral} $\mathrm{RCA}$ ($\mathrm{SRCA}$, for short) if every element of $A$ is semicentral.
 \end{definition}


\begin{theorem}\label{thm:equiv}
 The variety of right-handed skew $\mathrm{BA}$s is term equivalent to the variety of $\mathrm{SRCA}$s. 
\end{theorem}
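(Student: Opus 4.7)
The plan is to establish the term equivalence by exhibiting explicit translations between the two signatures, verifying the target axioms on each side, and then checking that the round-trip compositions reduce to the identity on terms.

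For the direction skew $\mathrm{BA}$ to $\mathrm{SRCA}$, given a right-handed skew $\mathrm{BA}$ $\mathbf{A} = (A, \lor, \land, \setminus, 0)$, I set
$$q(x, y, z) := (x \land y) \lor (z \setminus x),$$
keeping $0$. The $\mathrm{RCA}$ axiom $q(0, y, z) = z$ follows by (S2) and the consequences of (S3) that $z \setminus 0 = z$ and $0 \lor z = z$. For each $x$, semicentrality breaks into two parts: $q(x, x, 0) = x$ (immediate from idempotency of $\land$, the annihilation $0 \setminus x = 0$, and absorption), and the fact that $f_x(a, b) := q(x, a, b)$ satisfies (D1)--(D3). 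The guiding intuition is that $f_x$ decomposes each element into its $\land$-part below $x$ (a Boolean interval by normality) and its complementary $\setminus$-part above; this gives the direct product factorisation required by (D1)--(D3). The verification exploits normality inside the interval below $x$, distributivity to push $q$ through compound terms, and the right-handed identity $a \land x \land a = x \land a$ whenever a product must be reshuffled around $x$.

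For the converse direction, given an $\mathrm{SRCA}$ $\mathbf{A} = (A, q, 0)$, I set
$$x \land y := q(x, y, 0), \qquad x \lor y := q(x, x, y), \qquad x \setminus y := q(y, 0, x).$$
The most economical way to verify the skew $\mathrm{BA}$ axioms is via the lemma already cited from \cite{CS15}: each $e$ determines complementary factor congruences $(\phi_e, \bar\phi_e)$ with $\phi_e = \theta(e, 0)$, giving an isomorphism $\mathbf{A} \cong \mathbf{A}/\phi_e \times \mathbf{A}/\bar\phi_e$. On the $\phi_e$-quotient, $e$ collapses to $0$ and identities involving $e$ reduce by the $\mathrm{RCA}$ axiom; on the $\bar\phi_e$-quotient, $q(e, a, b)$ acts as the first projection. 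This coordinate-wise strategy, applied with appropriate choices of $e$, handles idempotency, associativity, absorption, normality, distributivity, the annihilation of $0$, the two clauses of (S3), and the right-handed identity. The round-trip checks are then straightforward: starting from a skew $\mathrm{BA}$, composing gives $x \land_{\text{new}} y = q(x, y, 0) = (x \land y) \lor (0 \setminus x) = x \land y$, with analogous reductions for $\lor$ and $\setminus$; starting from an $\mathrm{SRCA}$ and expanding the defined $q'(x, y, z) = (x \land y) \lor (z \setminus x)$ yields $q(q(x, y, 0), q(x, y, 0), q(x, 0, z))$, which collapses to $q(x, y, z)$ by splitting at $x$ and invoking (D1)--(D2) together with the semicentrality of $q(x, y, 0)$ and $q(x, 0, z)$.

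The main obstacle is condition (D3) of Definition \ref{def:decomposition} in the first direction: verifying that $f_x$ commutes with $\lor$, $\land$, and $\setminus$ requires deploying normality, both distributivities, and the right-handed identity together, and is by far the most computation-heavy step. Once (D3) is secured, the remaining work is largely symbolic manipulation organised by the factor-congruence decomposition.
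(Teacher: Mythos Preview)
Your proposal is correct and uses exactly the same term correspondence that the paper records (the paper does not give a full proof here but cites \cite{CS15}, merely displaying the translations $q(x,y,z)\rightsquigarrow (x\land y)\lor(z\setminus x)$, $x\lor y\rightsquigarrow q(x,x,y)$, $x\land y\rightsquigarrow q(x,y,0)$, $y\setminus x\rightsquigarrow q(x,0,y)$). Your sketch of the verification via factor congruences and the identification of (D3) as the computational crux is a faithful elaboration of that approach.
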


The proof is based on the following correspondence between the algebraic similarity types
of skew BAs and of SRCAs:
\[%
\begin{array}
[c]{lll}%
q(x,y,z) & \rightsquigarrow & (x\land y) \lor (z\setminus x)\\
x\lor y & \rightsquigarrow & q(x,x,y)\\
x\land y & \rightsquigarrow & q(x,y,0)\\
y\setminus x & \rightsquigarrow & q(x,0,y).
\end{array}
\]

\begin{example}\label{partial-exa} (see \cite{CVLS13,CS15})
Let $\mathcal F(X,Y)$ be the set of all partial functions from $X$ into $Y$.  The algebra $\mathbf F= (\mathcal F(X,Y), q, 0)$ is a SRCA, where 
\begin{itemize}
\item $0=\emptyset$ is the empty function;
\item For all functions $f: F\to Y$, $g:G\to Y$ and $h:H\to Y$ ($F,G,H\subseteq X$),  
$$q(f,g,h) =g|_{G\cap F} \cup h|_{H\cap \bar F}.$$
\end{itemize} 
By Theorem \ref{thm:equiv} $\mathbf F$ is term equivalent to the right-handed skew BA of universe $\mathcal F(X,Y)$, whose operations are defined as follows:
$$f\land g = g|_{G\cap F};\qquad f\lor g = f\cup g|_{G\cap \bar F};\qquad g\setminus f = g|_{G\cap \bar F}.$$
\end{example}

\section{Boolean-like algebras of finite dimension\label{trallallero}}

Some important properties of Boolean algebras are shared  by $n$-pointed algebras whose elements satisfy all the equational conditions of $n$-central elements through an operator $q$ of arity $n+1$ satisfying the fundamental properties of a generalised if-then-else connective. These algebras, and the varieties they form, were termed Boolean-like algebras of dimension $n$ in \cite{BLPS18}.

\subsection{Church algebras of finite dimension\label{dobbiaco}}
In this section we recall from \cite{BLPS18} the notion of a Church algebra of dimension $n$. These algebras have $n$ designated elements $\e_1,\dots, \e_n$ ($n \geq 2$) and an operation $q$ of arity $n + 1$ (a sort of ``generalised if-then-else'') satisfying $q(\e_i, x^1,\dots, x^n) = x^i$. The operator $q$ induces, through the so-called $n$-central elements, a decomposition of the algebra into n factors. 

\begin{definition}
Algebras of type $\tau$ having $n$ designated elements $\e_1,\dots,\e_n$ ($n\geq 2$) and a term operation $q$ of arity $n+1$  satisfying $q(\e_i,x^1,\dots,x^n)=x^i$ are called \emph{Church algebras of dimension} $n$ ($n\mathrm{CA}$, for short); 
 $n$CAs admitting only the $(n+1)$-ary $q$ operator and the $n$ constants $\e_{1},\dots ,\e_{n}$ are called \emph{pure} $n$CAs.
\end{definition}

{If $\mathbf{A}$ is an }$n\mathrm{CA}${,} then $\mathbf{A}_{0}=(A,q,\e_{1},\dots ,\e_{n})$ is the \emph{pure reduct} of $\mathbf{A}$.

Church algebras of dimension $2$ were introduced as Church algebras in \cite{MS08} and studied in \cite{first}.  Examples of Church algebras of dimension $2$ are Boolean algebras (with $q(x,y,z) =(x\wedge y)\vee (\lnot x\wedge z)$) or rings with unit (with $q( x,y,z) =xy+z-xz$). Next, we present some examples of Church algebra having dimension greater than $2$.

\begin{example}
\label{cocchio} (\emph{Semimodules}) Let $R$ be a semiring and 
 $V$ be an $R$-semimodule generated by a finite set $E=\{\e_{1},\dots ,\e_n\}$. Then we define an operation $q$ of arity $n+1$ as
follows (for all $\mathbf{v}=\sum_{j=1}^nv_{j}\e_j$
and $\mathbf{w}^{i}=\sum_{j=1}^nw_{j}^{i}\e_j$):
$$
q(\mathbf{v},\mathbf{w}^{1},\dots ,\mathbf{w}^{n})=\sum_{i=1}^{n}v_{i}\mathbf{w}^{i}.
$$
Under this definition, $V$ becomes a $n\mathrm{CA}$. As a concrete example, if $B$ is a Boolean algebra, $B^n$ is a semimodule (over the Boolean ring $B$) with the following operations: $(a^1,\dots,a^n)+(b^1,\dots,b^n) = (a^1\lor b^1,\dots,a^n\lor b^n)$ and $b(a^1,\dots,a^n) = (b\land a^1,\dots,b\land a^n)$.  $B^n$ is also called a Boolean vector space (see \cite{GL,G14}).
\end{example}


\begin{example}
\label{exa:partition} (\emph{$n$-Sets}) Let $I$ be a set.
An \emph{$n$-subset} of $I$ is a sequence $(Y_{1},\dots ,Y_{n})$ of subsets 
$Y_{i}$ of $I$. We denote by $\mathrm{Set}_{n}(I)$ the family of all $n$-subsets of $I$.  $\mathrm{Set}_{n}(I)$ becomes a pure $n$CA if we define an $(n+1)$-ary operator $q$ and $n$ constants $\e_1,\dots,\e_n$ as follows, for all $n$-subsets $\mathbf{y}^{i}= (Y^i_{1},\dots,Y^i_{n})$:
$$
q( \mathbf{y}^0,\mathbf{y}^1,\dots ,\mathbf{y}^n) =(\bigcup\limits_{i=1}^{n}Y^0_{i}\cap Y_{1}^{i},\dots,\bigcup\limits_{i=1}^{n}Y^0_{i}\cap Y_{n}^{i});\quad \e_1=(I,\emptyset,\dots,\emptyset),\dots, \e_n=(\emptyset,\dots,\emptyset,I).
$$
\end{example}



{In \cite{vaggione}, Vaggione introduced the notion
of \emph{central element} to study algebras whose complementary factor
congruences can be replaced by certain elements of their universes. 
Central elements coincide with central idempotents in rings with
unit 
and with members of the centre in ortholattices. 
 


\begin{theorem} \cite{BLPS18}
\label{thm:centrale} If $\mathbf{A}$ is a $n\mathrm{CA}$ of type $\tau $
and $c\in A$, then the following conditions are equivalent:

\begin{enumerate}
\item $c$ is a factor element (w.r.t. $q$) satisfying the identity $q(c,\e_{1},\dots ,\e_{n})=c$;

\item the sequence of congruences $(\theta (c,\e_{1}),\dots
,\theta (c,\e_{n}))$ is a $n$-tuple of complementary factor congruences of $\mathbf{A}$;

\item for all $a^{1},\dots ,a^{n}\in A$, $q(c,a^{1},\dots ,a^{n})$ is the
unique element such that $a^{i}\ \theta (c,\e_{i})\ q(c,a^{1},\dots ,a^{n})$,
for all $1\leq i\leq n$;

%
%
\item The function $f_{c}$, defined by $f_{c}(x^{1},\dots
,x^{n})=q(c,x^{1},\dots ,x^{n})$, is a $n$-ary decomposition operator on $\mathbf{A}$ such that 
$f_{c}(\e_{1},\dots ,\e_{n})=c.$
\end{enumerate}
\end{theorem}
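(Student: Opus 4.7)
The strategy is a cyclic implication, with (1) $\Leftrightarrow$ (4) handled first by unpacking definitions, followed by (4) $\Rightarrow$ (2) $\Rightarrow$ (3) $\Rightarrow$ (4). Indeed, since $f_c(x^1,\dots,x^n) = q(c,x^1,\dots,x^n)$ by definition, the clause ``$f_c$ is a decomposition operator'' in (4) coincides with ``$c$ is a factor element w.r.t.\ $q$'' in (1), and $f_c(\e_1,\dots,\e_n) = c$ is literally $q(c,\e_1,\dots,\e_n) = c$, so (1) and (4) say the same thing.

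For (4) $\Rightarrow$ (2), I would invoke Theorem \ref{prop:pairfactor} applied to the decomposition operator $f_c$ to obtain a tuple $(\phi_1,\dots,\phi_n)$ of complementary factor congruences, with $a\,\phi_i\,b$ iff $q(c,a,\dots,a,b,a,\dots,a) = a$ (with $b$ at position $i$). The task is to identify each $\phi_i$ with $\theta(c,\e_i)$. The converse direction of Theorem \ref{prop:pairfactor} applied to $f_c(\e_1,\dots,\e_n) = c$ gives $\e_i\,\phi_i\,c$ for all $i$, whence $\theta(c,\e_i) \subseteq \phi_i$. For the reverse inclusion, if $a\,\phi_i\,b$ then reducing the defining equation modulo $\theta(c,\e_i)$ and using the Church axiom $q(\e_i,x^1,\dots,x^n) = x^i$ yields $a = q(c,a,\dots,b,\dots,a) \equiv q(\e_i,a,\dots,b,\dots,a) = b \pmod{\theta(c,\e_i)}$, so $(a,b) \in \theta(c,\e_i)$.

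For (2) $\Rightarrow$ (3), the Church axiom together with $c\,\theta(c,\e_i)\,\e_i$ yields $q(c,a^1,\dots,a^n)\,\theta(c,\e_i)\,q(\e_i,a^1,\dots,a^n) = a^i$, so $q(c,a^1,\dots,a^n)$ has the required property; uniqueness is then immediate from $\bigcap_i \theta(c,\e_i) = \Delta$ in Definition \ref{thm:cong}(1). For (3) $\Rightarrow$ (4), I would verify D1, D2, D3 of Definition \ref{def:decomposition} for $f_c$ by a uniqueness argument. D1 is immediate since $x$ itself trivially satisfies the compatibility conditions on the constant tuple $(x,\dots,x)$. For D2 and D3, each asserted identity $f_c(\mathrm{LHS}) = f_c(\mathrm{RHS})$ reduces to showing that both sides satisfy the same compatibility conditions modulo the $\theta(c,\e_i)$. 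Concretely, for D3 both $f_c(g(x_{11},\dots,x_{1k}),\dots,g(x_{n1},\dots,x_{nk}))$ and $g(f_c(x_{11},\dots,x_{n1}),\dots,f_c(x_{1k},\dots,x_{nk}))$ are characterised as the unique $u$ with $g(x_{i1},\dots,x_{ik})\,\theta(c,\e_i)\,u$ for every $i$, using only that each $\theta(c,\e_i)$ is a congruence. Finally, $f_c(\e_1,\dots,\e_n) = c$ is immediate from the uniqueness clause of (3), since $c\,\theta(c,\e_i)\,\e_i$ for each $i$.

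The main obstacle is the inclusion $\phi_i \subseteq \theta(c,\e_i)$ in the proof of (4) $\Rightarrow$ (2); once this is handled via the quotient computation leveraging the Church axiom, the remaining implications amount to bookkeeping through Theorem \ref{prop:pairfactor} and the uniqueness of solutions to the compatibility system $\{a^i\,\theta(c,\e_i)\,u\}_i$.
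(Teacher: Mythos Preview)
Your argument is correct. Note, however, that the paper does not supply its own proof of this theorem: it is quoted from \cite{BLPS18} and stated without proof, so there is nothing in the present paper to compare your approach against. Your cyclic scheme $(1)\Leftrightarrow(4)\Rightarrow(2)\Rightarrow(3)\Rightarrow(4)$, leaning on Theorem~\ref{prop:pairfactor} and the Church axiom $q(\e_i,x^1,\dots,x^n)=x^i$, is the natural route and goes through as written. One small remark on the step $(4)\Rightarrow(2)$: rather than appealing to the ``converse direction'' of Theorem~\ref{prop:pairfactor} and the bijective correspondence to get $\e_i\,\phi_i\,c$, you can verify $c\,\phi_i\,\e_i$ directly from (D1), (D2) and $f_c(\e_1,\dots,\e_n)=c$ by taking $y_{jk}=\e_k$ for $j\neq i$ and $y_{ik}=\e_i$ in (D2), which yields $f_c(c,\dots,c,\e_i,c,\dots,c)=f_c(\e_1,\dots,\e_n)=c$. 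This avoids relying on the unproved bijectivity statement preceding Theorem~\ref{prop:pairfactor}.
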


\begin{definition}
\label{def:ncentral} If $\mathbf{A}$ is a $n\mathrm{CA}$, then $c\in A$
is called \emph{$n$-central} if it satisfies one of the equivalent conditions of Theorem \ref{thm:centrale}.
A central element $c$ is \emph{nontrivial} if $c\notin\{\e_{1},\dots ,\e_{n}\}$.
\end{definition}

Every $n$-central element $c\in A$ induces a decomposition of $\mathbf{A}$ as a direct product of the 
algebras $\mathbf{A}/\theta (c,\e_{i})$, for $i\leq n$.


The set of all $n$-central elements of a $n\mathrm{CA}$ $\mathbf{A}$ is a subalgebra of the pure reduct of $\mathbf{A}$.
We denote by $\mathbf{Ce}_{n}(\mathbf{A})$ the algebra 
$(\mathrm{Ce}_{n}(\mathbf{A}),q,\e_{1},\dots ,\e_{n})$ 
of all $n$-central elements of an 
$n\mathrm{CA}$ $\mathbf{A}$.

\bigskip

Factorisations of arbitrary algebras in $n$ factors may be studied in terms of $n$-central elements of suitable $n$CAs of functions, as explained in the following example.

\begin{example} 
Let $\mathbf A$ be an arbitrary algebra of type $\tau$ and $F$ be a set of functions from $A^n$ into $A$, which includes the $n$ projections and all constant functions ($a_1,\dots,a_n\in A$):
\begin{enumerate}
\item $\e_i^\mathbf F(a_1,\dots,a_n)=a_i$;
\item $f_b(a_1,\dots,a_n)=b$, for every $b\in A$;
\end{enumerate}
and it is closed under the following operations (for all $f, h_i,g_j\in F$ and all $a_1,\dots,a_n\in A$):
\begin{enumerate}
\item[(3)] $q^\mathbf F(f,g_1\dots,g_n)(a_1,\dots,a_n) =f(g_1(a_1,\dots,a_n)\dots,g_n(a_1,\dots,a_n))$.
\item[(4)] $\sigma^\mathbf F(h_1,\dots,h_k)(a_1,\dots,a_n) = \sigma^\mathbf A(h_1(a_1,\dots,a_n),\dots, h_k(a_1,\dots,a_n))$, for every $\sigma\in\tau$ of arity $k$.
\end{enumerate}
The algebra $\mathbf F = (F,\sigma^\mathbf F,q^\mathbf F,\e_1^\mathbf F,\dots,\e_n^\mathbf F)_{\sigma\in\tau}$ is a $n$CA.
It is possible to prove that a function $f\in F$ is a $n$-central element of $\mathbf F$ if and only if 
 $f$ is a $n$-ary decomposition operator on the algebra $\mathbf A$ commuting with every function $g\in F$:
$$f(g(a_{11},\dots, a_{1n}),\dots, g(a_{n1},\dots, a_{nn})) = g(f(a_{11},\dots, a_{n1}),\dots, f(a_{1n},\dots, a_{nn})).$$
The reader may consult \cite{SLP18} for the case $n=2$.
\end{example}

\subsection{Boolean-like algebras}

Boolean algebras are Church algebras of dimension $2$ all of whose elements are 
$2$-central. It turns out that, among the $n$-dimensional Church algebras, those
algebras all of whose elements are $n$-central inherit many of the
remarkable properties that distinguish Boolean algebras. We now recall from \cite{BLPS18} the notion of
Boolean-like algebras of dimension $n$,  the main subject of study of this paper.

In \cite{BLPS18} $n$BAs are studied in the general case of an arbitrary similarity type.
Here, we restrict ourselves to consider the {\em pure} case, where $q$ is the unique operator of the
algebra.

\begin{definition}
\label{mezzucci}A pure $n\mathrm{CA}$ $\mathbf{A} = (A,q,\e_1,\dots,\e_n)$ is called a \emph{Boolean-like
algebra of dimension $n$} ($n\mathrm{BA}$, for short) if every element of $A$
is $n$-central.
\end{definition}

The class of all  $n\mathrm{BA}$s is a
variety axiomatised by the following identities:
\begin{itemize}
\item[$\qquad (\mathrm{B0})$]  $q(\e_i,x^1,\dots,x^n) = x^i$ ($i=1,\dots,n$).
\item[$\qquad (\mathrm{B1})$]    $q(y,x,\dots,x) = x$.
\item[$\qquad (\mathrm{B2})$] $q(y, q(y, x^{11},x^{12},\dots,x^{1n}),\dots,q(y,x^{n1},x^{n2},\dots,x^{nn}))= q(y,x^{11},\dots,x^{nn})$.
\item[$\qquad (\mathrm{B3})$]  $q(y,q(x^{10},x^{11},\dots,x^{1n}),\dots, q(x^{n0},x^{n1},\dots,x^{nn})) = q(q(y, x^{10},\dots,x^{n0}),\dots, q(y, x^{1n},\dots,x^{nn}))$.
\item[$\qquad (\mathrm{B4})$] $q(y,\e_1,\dots,\e_n) = y$.
\end{itemize}

\bigskip

Boolean-like algebras of dimension $2$ were introduced in \cite{first} with the
name \textquotedblleft Boolean-like algebras\textquotedblright . \emph{Inter
alia}, it was shown in that paper that the variety of Boolean-like algebras
of dimension $2$ is term-equivalent to the variety of Boolean algebras.

\begin{example}
The algebra $\mathbf{Ce}_{n}(\mathbf{A})$ of all $n$-central
elements of a $n\mathrm{CA}$ $\mathbf{A}$ of type $\tau$ is a canonical example of $n\mathrm{BA}$ (see the remark after Theorem \ref{thm:centrale}).
\end{example}

\begin{example}\label{exa:n}
The algebra 
$\mathbf{n}=( \{ \mathsf \e_{1},\dots,\mathsf \e_{n}\} ,q^{\mathbf{n}},\mathsf e^\mathbf{n}_{1},\dots,\mathsf e^\mathbf{n}_{n})$,
where $q^{\mathbf{n}}( \mathsf \e_{i},x^{1},\dots,x^{n}) =x^{i}$ for every $i\leq n$, is a $n\mathrm{BA}$.
\end{example}

\begin{example}\label{exa:parapa} (\emph{$n$-Partitions}) Let $I$ be a set. An \emph{$n$-partition} of $I$ is a $n$-subset $(Y^{1},\ldots ,Y^{n})$ of $I$ such that $\bigcup_{i=1}^{n}Y^{i}=I$ and $Y^{i}\cap Y^{j}=\emptyset $ for all $i\neq j$.
The set of $n$-partitions of $I$ is closed under the $q$-operator defined in 
Example \ref{exa:partition} and constitutes  the algebra of all $n$-central
elements of the pure $n$CA $\mathrm{Set}_{n}(I)$ of all $n$-subsets of $I$. 
Notice that the algebra  of $n$-partitions of $I$, denoted by $\mathrm{Par}_n(I)$, can be proved isomorphic to the $n\mathrm{BA}$ $\mathbf{n}^I$ (the Cartesian product of $I$ copies of the algebra $\mathbf{n}$).
\end{example}

The variety $\mathrm{BA}$ of Boolean algebras is semisimple as every $\mathbf{A}\in \mathrm{BA}$ is
subdirectly embeddable into a power of the $2$-element Boolean algebra, which
is the only subdirectly irreducible member of $\mathrm{BA}$. This property finds an analogue in the structure theory of $n\mathrm{BA}$s.

\begin{theorem}
\label{lem:subirr} \cite{BLPS18} The algebra $\mathbf n$ is the unique subdirectly irreducible $n\mathrm{BA}$ and it generates 
the variety of $n\mathrm{BA}$s.
\end{theorem}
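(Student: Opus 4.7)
The plan is a Birkhoff subdirect-representation argument: first verify that $\mathbf n$ is subdirectly irreducible (indeed simple), then show that any subdirectly irreducible $n\mathrm{BA}$ must be isomorphic to $\mathbf n$, and finally conclude that $\mathcal V(\mathbf n)$ coincides with the whole variety.

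To see $\mathbf n$ is simple, I would suppose $\e_i \mathrel{\theta} \e_j$ for some congruence $\theta$ and some $i\neq j$. Axiom $(\mathrm{B0})$ then gives, for every choice of $a_1,\dots,a_n \in \{\e_1,\dots,\e_n\}$,
\[
a_i = q(\e_i,a_1,\dots,a_n) \mathrel{\theta} q(\e_j,a_1,\dots,a_n) = a_j,
\]
and letting $a_i,a_j$ vary forces $\theta = \nabla$. So $\mathrm{Con}(\mathbf n) = \{\Delta,\nabla\}$, which makes $\mathbf n$ simple and thus subdirectly irreducible. The same computation, applied in an arbitrary nontrivial $n\mathrm{BA}$, shows that the constants $\e_1,\dots,\e_n$ must be pairwise distinct there (otherwise $(\mathrm{B0})$ forces $x^i = x^j$ identically and the algebra collapses).

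For uniqueness, let $\mathbf A$ be a nontrivial subdirectly irreducible $n\mathrm{BA}$. Any subdirectly irreducible algebra is directly indecomposable, since two nontrivial complementary factor congruences would both contain the monolith whereas their meet is $\Delta$. In a directly indecomposable algebra the only factor congruences are $\Delta$ and $\nabla$. Now fix $a \in A$: because every element of an $n\mathrm{BA}$ is $n$-central, Theorem~\ref{thm:centrale}(2) says the sequence $(\theta(a,\e_1),\dots,\theta(a,\e_n))$ is an $n$-tuple of complementary factor congruences, so each $\theta(a,\e_i) \in \{\Delta,\nabla\}$ while $\bigcap_i \theta(a,\e_i) = \Delta$. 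Since the $\e_i$ are distinct, the intersection cannot equal $\nabla$, so some $\theta(a,\e_i)$ must be $\Delta$, which forces $a = \e_i$. Hence $A = \{\e_1,\dots,\e_n\}$ and $\mathbf A \cong \mathbf n$. Applying Birkhoff's subdirect representation theorem inside the variety of $n\mathrm{BA}$s, every $n\mathrm{BA}$ is a subdirect product of copies of $\mathbf n$, whence $\mathcal V(\mathbf n)$ is the full variety of $n\mathrm{BA}$s.

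The only nontrivial step is the dichotomy $\theta(a,\e_i) \in \{\Delta,\nabla\}$ for an arbitrary $a$ in a directly indecomposable $\mathbf A$; everything else is bookkeeping. This is where Theorem~\ref{thm:centrale} does the essential work, by turning each element $a$ of an $n\mathrm{BA}$ into its own internal witness for a factorisation of $\mathbf A$ into $n$ pieces.
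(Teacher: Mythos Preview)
Your argument is correct. The paper itself does not supply a proof of this theorem: it is quoted from \cite{BLPS18} and stated without proof, so there is no ``paper's own proof'' to compare against here. That said, the route you take---show $\mathbf n$ is simple via $(\mathrm{B0})$, then use $n$-centrality of every element together with direct indecomposability to force $A=\{\e_1,\dots,\e_n\}$ in any subdirectly irreducible $n\mathrm{BA}$, and close with Birkhoff---is exactly the natural one, and it is the argument one finds in \cite{BLPS18}. One cosmetic point: your sentence ``Since the $\e_i$ are distinct, the intersection cannot equal $\nabla$'' is slightly misdirected; the intersection is $\Delta$ by Definition~\ref{thm:cong}(1), and what you really use is that $\Delta\neq\nabla$ (nontriviality), so not all $\theta(a,\e_i)$ can equal $\nabla$. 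The logic is fine, only the phrasing could be tightened.
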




The next corollary shows that, for any $n\geq 2$, the $n\mathrm{BA}$ $%
\mathbf{n}$ plays a role analogous to the Boolean algebra $\mathbf{2}$ of
truth values.

\begin{corollary}
\label{cor:stn} Every $n\mathrm{BA}$ $\mathbf{A}$ is isomorphic to a subdirect power of $\mathbf{n}^{I}$, for some set $I$.
\end{corollary}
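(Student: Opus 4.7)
The plan is to derive this statement as a direct consequence of Theorem \ref{lem:subirr} via Birkhoff's classical subdirect representation theorem. Recall that Birkhoff's theorem asserts that every algebra in a variety $\mathcal V$ embeds as a subdirect product into a product of subdirectly irreducible members of $\mathcal V$. Since the variety of $n\mathrm{BA}$s is, by hypothesis, the ambient variety in which $\mathbf A$ lives, this result applies verbatim.

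First I would invoke Birkhoff's theorem to obtain a family $\{\mathbf A_i : i \in I\}$ of subdirectly irreducible $n\mathrm{BA}$s and an embedding $\iota:\mathbf A \hookrightarrow \prod_{i\in I} \mathbf A_i$ such that each projection $\pi_i \circ \iota : \mathbf A \to \mathbf A_i$ is surjective. Next, by Theorem \ref{lem:subirr}, up to isomorphism the only subdirectly irreducible $n\mathrm{BA}$ is $\mathbf n$, so each $\mathbf A_i$ must be isomorphic to $\mathbf n$. Pulling these isomorphisms through the product, we can replace $\prod_{i\in I}\mathbf A_i$ by $\mathbf n^I$ and obtain the desired subdirect embedding $\mathbf A \hookrightarrow \mathbf n^I$.

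Since the heavy lifting has already been performed in Theorem \ref{lem:subirr}, essentially no obstacle remains: the corollary is a one-line application of Birkhoff, and the only bookkeeping is to note that one may take the index set $I$ to be any set indexing a subdirect decomposition (for instance, $I$ can be chosen as the set of congruences $\theta \in \mathrm{Con}(\mathbf A)$ such that $\mathbf A/\theta \cong \mathbf n$, with $\bigcap I = \Delta$). No additional combinatorial or algebraic obstruction is expected; the only mild subtlety is to remark explicitly that ``subdirect power of $\mathbf n$'' is what Birkhoff's theorem yields once the classification of subdirectly irreducibles is in hand.
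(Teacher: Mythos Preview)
Your proposal is correct and matches the paper's intent: the corollary is stated there without proof, as an immediate consequence of Theorem~\ref{lem:subirr} via Birkhoff's subdirect representation theorem, which is precisely the argument you give.
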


%
%

A subalgebra of the $n$BA $\mathrm{Par}_n(I)$ of the $n$-partitions on a set $I$, defined in Example \ref{exa:parapa},  is called a \emph{field of $n$-partitions on $I$}. The Stone representation theorem for $n$BAs follows.

\begin{corollary} \label{cor:field-partitions}
Any $\mathrm{nBA}$ is isomorphic to a field of $n$-partitions on a suitable set $I$.
\end{corollary}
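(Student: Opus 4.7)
The plan is to chain together two already-available results: Corollary \ref{cor:stn}, which embeds every $n$BA subdirectly into $\mathbf{n}^I$, and the identification $\mathbf{n}^I \cong \mathrm{Par}_n(I)$ recorded in Example \ref{exa:parapa}. The whole argument is essentially a bookkeeping step once those are in hand.

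First I would invoke Corollary \ref{cor:stn} to obtain, for a given $n$BA $\mathbf A$, a set $I$ and a subdirect embedding $\iota : \mathbf A \hookrightarrow \mathbf{n}^I$. Since a subdirect embedding is in particular an injective homomorphism, the image $\iota(\mathbf A)$ is a subalgebra of $\mathbf{n}^I$.

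Next I would make the isomorphism $\Phi : \mathbf{n}^I \to \mathrm{Par}_n(I)$ explicit: a function $f : I \to \{\e_1,\dots,\e_n\}$ is sent to the $n$-partition $(f^{-1}(\e_1),\dots,f^{-1}(\e_n))$ of $I$. One checks this is a bijection (the inverse sends a partition $(Y^1,\dots,Y^n)$ to the function $i \mapsto \e_j$ where $i \in Y^j$), that $\Phi$ preserves the constants $\e_k$ (the $k$-th constant function corresponds to the partition whose $k$-th block is all of $I$), and that $\Phi$ commutes with the $q$-operator, since the $q$ of $\mathrm{Par}_n(I)$ defined in Example \ref{exa:partition} precisely mirrors the pointwise action of $q$ on $\mathbf{n}^I$. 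This verification is routine from the defining formulas.

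Finally, composing $\iota$ with $\Phi$ exhibits $\mathbf A$ as isomorphic to the subalgebra $\Phi(\iota(\mathbf A))$ of $\mathrm{Par}_n(I)$, which by definition is a field of $n$-partitions on $I$. There is no real obstacle here; the only item needing attention is to state Example \ref{exa:parapa}'s isomorphism carefully enough that it transports subalgebras to subalgebras, which is automatic since $\Phi$ is an isomorphism of $n$BAs.
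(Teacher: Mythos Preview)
Your proposal is correct and follows exactly the approach the paper has in mind: the corollary is stated without proof precisely because it is an immediate consequence of Corollary~\ref{cor:stn} together with the isomorphism $\mathbf{n}^I\cong\mathrm{Par}_n(I)$ from Example~\ref{exa:parapa}, and your write-up spells out this chain cleanly.
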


One of the most remarkable properties of the $2$-element Boolean algebra, called \emph{primality} in universal algebra \cite[ Sec. 7 in Chap. IV]{BS}, is the definability of all finite Boolean functions in terms of the connectives {\sc and, or, not}. This property is inherited by $n$BAs. An algebra of cardinality $n$ is
primal if and only if it admits the $n\mathrm{BA}$ $\mathbf n$ as subreduct.

\begin{definition} Let $\mathbf{A}$ be a nontrivial algebra.
$\mathbf{A}$ is \emph{primal} 
if it is of finite cardinality and, for every function $f:A^{k}\rightarrow A$ ($k\geq 0$), there is a $k$-ary term $t$ such that for all $a^{1},\dots,a^{k}\in A$, 
$f( a^{1},\dots,a^{k}) =t^{\mathbf{A}}(a^{1},\dots,a^{k})$.
\end{definition}

A variety $\mathcal V$ is primal if $\mathcal V= \mathcal V(\mathbf A)$ for a  primal algebra $\mathbf A$.


\begin{theorem}\label{prop:nbaprim} \cite{BLPS18}
\begin{itemize}
\item[(i)] The variety $n\mathrm{BA}=\mathcal{V}(\mathbf n)$ is primal;
\item[(ii)] Let $\mathbf{A}$ be a finite algebra of cardinality $n$. Then $\mathbf A$ is primal if and only if it admits  the algebra $\mathbf n$ as subreduct. 
\end{itemize}
\end{theorem}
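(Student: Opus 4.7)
The plan is to prove (i) directly from the definition of primality, and then deduce (ii) as a clean corollary of (i).

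For part (i), by Theorem \ref{lem:subirr} we already have $n\mathrm{BA}=\mathcal V(\mathbf n)$, so primality of the variety reduces to primality of the single algebra $\mathbf n$. The plan is therefore to show that every function $f:\{\e_1,\dots,\e_n\}^k\to\{\e_1,\dots,\e_n\}$ is the interpretation of some term of the pure type $(q,\e_1,\dots,\e_n)$, and I would prove this by induction on the arity $k$. The base case $k=0$ is immediate since the four constants $\e_1,\dots,\e_n$ exhaust $\{\e_1,\dots,\e_n\}$. For the inductive step, given $f$ of arity $k\geq 1$, set
\[
f_j(x_2,\dots,x_k)\;=\;f(\e_j,x_2,\dots,x_k)\qquad(1\leq j\leq n),
\]
which are $(k-1)$-ary functions on $\{\e_1,\dots,\e_n\}$. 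By the induction hypothesis each $f_j$ is realised by some term $t_j$. Setting
\[
t(x_1,\dots,x_k)\;=\;q\bigl(x_1,\,t_1(x_2,\dots,x_k),\,\dots,\,t_n(x_2,\dots,x_k)\bigr),
\]
axiom (B0) yields $t^{\mathbf n}(\e_j,x_2,\dots,x_k)=t_j^{\mathbf n}(x_2,\dots,x_k)=f_j(x_2,\dots,x_k)=f(\e_j,x_2,\dots,x_k)$ for every $j$, so $t^{\mathbf n}=f$ as required.

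For part (ii), the $(\Rightarrow)$ direction is essentially tautological: if $\mathbf A$ is primal of cardinality $n$, enumerate $A=\{a_1,\dots,a_n\}$ and transport the operations of $\mathbf n$ along the bijection $\e_i\mapsto a_i$; primality of $\mathbf A$ guarantees that the resulting $(n+1)$-ary operation and the $n$ constants are all term operations of $\mathbf A$, so the algebra $(A,q,a_1,\dots,a_n)$ is a reduct of $\mathbf A$ isomorphic to $\mathbf n$, hence certainly a subreduct. For the $(\Leftarrow)$ direction, note that if $\mathbf A$ has exactly $n$ elements and admits $\mathbf n$ as a subreduct, then the carrier of that subreduct must be all of $A$ (it already has $n$ elements), so $\mathbf n$ is actually a reduct of $\mathbf A$ via some bijective relabelling. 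Every term operation of $\mathbf n$ is then a term operation of $\mathbf A$, and by part (i) every finitary function on $A$ is a term operation of $\mathbf n$; hence every finitary function on $A$ is a term operation of $\mathbf A$, which is exactly primality.

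The main obstacle is the inductive step of part (i), and more specifically checking that the recursive definition of $t$ really captures $f$ on all inputs (not just those with first coordinate equal to some $\e_j$) — but this is immediate because every element of $\{\e_1,\dots,\e_n\}$ \emph{is} one of the $\e_j$'s, so axiom (B0) applies in every case. No further properties beyond (B0) are needed for this direction, which neatly explains why $\mathbf n$ behaves like a generalised two-element Boolean algebra with respect to functional completeness.
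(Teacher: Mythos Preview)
The paper does not actually give its own proof of this theorem; it is simply quoted from \cite{BLPS18}. Your argument is correct and is the standard one: the induction on arity using only axiom (B0) is exactly how one establishes functional completeness for a selector-with-all-constants, and your treatment of (ii) via the cardinality observation (a subreduct with $n$ elements inside an $n$-element algebra must be a full reduct) is the expected route. One cosmetic slip: in the base case you wrote ``the four constants $\e_1,\dots,\e_n$'' where you clearly meant ``the $n$ constants''.
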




\section{Skew Boolean algebras and $n$BAs}
In this section we prove that any $n\mathrm{BA}$ $\mathbf A$ contains a symmetric cluster of right-handed skew BAs $S_{1}(\mathbf A),\dots, S_{n}(\mathbf A)$.  The algebra $S_{ i}(\mathbf A)$, called the skew $i$-reduct of $\mathbf A$, has $\e_i$ as a bottom element,  and the other constants $\e_1,\dots,\e_{i-1},\e_{i+1},\dots,\e_n$ as maximal elements. 
Rather interestingly, every permutation $\gs$ of the symmetric group $S_n$ determines a bunch of isomorphisms
$$\mathrm{S}_1(\mathbf A) \cong \mathrm{S}_{\gs 1}(\mathbf A)\qquad\dots\qquad \mathrm{S}_n(\mathbf A) \cong \mathrm{S}_{\gs n}(\mathbf A)$$
which shows the inner symmetry of the $n$BAs.
We conclude the section with  a general representation theorem for right-handed skew BAs in terms of $n$BAs of $n$-partitions.

\subsection{The skew reducts of a $n$BA}
In \cite{CS15} it is shown that the variety of skew BAs is term equivalent to the variety of
SRCAs, whose type contains only a ternary operator.
Here we use the $n+1$-ary operator $q$ of a $n$BA $\mathbf A$  to define ternary operators $t_1,\ldots,t_n$ such that
the reducts $(A,t_i,\e_i)$ are isomorphic SRCAs. Their term equivalent skew BAs are all isomorphic reducts of $\mathbf A$, too.

For every subset $d$  of $\hat n$ and $i\in \hat n$, we denote by $\bar d$ the set $\hat n\setminus d$ and by $\bar i$ the set $\hat n\setminus\{i\}$.

The $(n+1)$-ary operator $q$ determines an operator $t_d$ of arity $3$, for every nonempty set $d\subseteq \hat n$.

\begin{definition}
Given $d\subseteq \hat n$, we define $t_d(x,y,z)= q(x, y/\bar d,z/ d)$.
%
%
\end{definition}


\begin{lemma}\label{lem:dec0} Let $\mathbf A$ be a $n\mathrm{BA}$. The following conditions hold for every $x,y\in A$:
\begin{enumerate}
\item $t_d(\e_i,x,y)=y$ and $t_d(\e_j,x,y)=x$, for every $i\in d$ and $j\notin d$; 
\item For every $x\in A$, $t_d(x,-,-)$ is a $2$-ary decomposition operator on $\mathbf A$.
\end{enumerate}
\end{lemma}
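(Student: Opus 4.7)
The plan is to verify directly the two claims, reducing everything to the four axioms (B0)--(B3) of an $n\mathrm{BA}$ and to the bookkeeping inherent in the notation $y/\bar d,\,z/d$.

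For (1), this is a one-line check using (B0). By definition, $t_d(\e_i,x,y)=q(\e_i,z_1,\dots,z_n)$ where $z_k=x$ if $k\notin d$ and $z_k=y$ if $k\in d$. Applying (B0), $q(\e_i,z_1,\dots,z_n)=z_i$. If $i\in d$ then $z_i=y$, giving $t_d(\e_i,x,y)=y$; if $j\notin d$ then $z_j=x$, giving $t_d(\e_j,x,y)=x$.

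For (2), fix $x\in A$ and set $f(y,z):=t_d(x,y,z)$. We must check (D1), (D2), (D3) of Definition 3 for $f$. (D1) is the identity $f(y,y)=q(x,y,\dots,y)=y$, which is just (B1). For (D2), expand
\[
f(f(y_1,z_1),f(y_2,z_2)) \;=\; q\bigl(x,\,Z_1,\dots,Z_n\bigr),
\]
where $Z_k=t_d(x,y_1,z_1)=q(x,v^k_1,\dots,v^k_n)$ for $k\notin d$, with $v^k_j=y_1$ if $j\notin d$ and $v^k_j=z_1$ if $j\in d$; and $Z_k=t_d(x,y_2,z_2)=q(x,v^k_1,\dots,v^k_n)$ for $k\in d$, with $v^k_j=y_2$ if $j\notin d$ and $v^k_j=z_2$ if $j\in d$. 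Axiom (B2) collapses this to the ``diagonal'' $q(x,v^1_1,v^2_2,\dots,v^n_n)$; inspection of the diagonal entries shows $v^k_k=y_1$ for $k\notin d$ and $v^k_k=z_2$ for $k\in d$, which is precisely $t_d(x,y_1,z_2)=f(y_1,z_2)$, as required.

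For (D3), in the pure case the only nullary/non-nullary operations are the constants $\e_i$ and the $(n{+}1)$-ary $q$. Preservation of each constant reduces to $f(\e_i,\e_i)=\e_i$, a special case of (D1). Preservation of $q$ is the identity
\[
f\bigl(q(a_0,\dots,a_n),\,q(b_0,\dots,b_n)\bigr) \;=\; q\bigl(f(a_0,b_0),\,f(a_1,b_1),\dots,f(a_n,b_n)\bigr),
\]
which I obtain from (B3) with the substitution $x^{k0}=a_0,\ x^{kj}=a_j$ for $k\notin d$ and $x^{k0}=b_0,\ x^{kj}=b_j$ for $k\in d$: the LHS of (B3) becomes $f\bigl(q(\bar a),q(\bar b)\bigr)$, and each column $q(x,x^{1j},\dots,x^{nj})$ on the RHS is by construction $t_d(x,a_j,b_j)=f(a_j,b_j)$.

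The only real obstacle is keeping the index bookkeeping straight between the inner and outer occurrences of $q$ in (B2) and (B3); once one writes out the coordinates of $y/\bar d,\,z/d$ explicitly, each step is a direct match of patterns.
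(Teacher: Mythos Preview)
Your proof is correct. Part (1) matches the paper's ``Trivial'' verbatim (both are a direct application of (B0)). For part (2), the paper takes a more conceptual shortcut: it observes that $q(x,-,\dots,-)$ is an $n$-ary decomposition operator (this is exactly what (B1)--(B3) say) and then invokes the general fact from \cite{MMT87} that identifying some coordinates of an $n$-ary decomposition operator yields a decomposition operator of lower arity. You instead unfold that general fact in this particular instance, matching (D1) with (B1), (D2) with (B2), and (D3) with (B3) by explicit index bookkeeping. The two approaches are equivalent in spirit; yours is self-contained and makes the correspondence between the $n\mathrm{BA}$ axioms and the decomposition-operator axioms fully explicit, while the paper's version is shorter and highlights that the result is an instance of a known closure property of decomposition operators.
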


\begin{proof} (1) Trivial. (2) The binary operator $t_d(x,-,-)$ is a decomposition operator, because it is obtained by the $n$-ary decomposition operator $q(x,-,\dots,-)$  equating some of its coordinates  (see \cite{MMT87}).
%
\end{proof}


We introduce below two constants and five binary operations derived from $t_d$.

\begin{definition}
Let $0_i=\e_i$ and $1_j=\e_j$, where $i\in  d$ and $j\in\bar  d$. We define 
 $$x\land_d y=t_d(x,y,0_i);\qquad x\lor_d y= t_d(x,1_j,y);\qquad y\setminus_d x=t_d(x,0_i,y);$$ 
 $$x\: \barwedge_d\: y= t_d(x,y,x); \qquad x\ \overline\vee_d\ y= t_d(x,x,y).$$
\end{definition}

We now define three reducts that characterise the inner structure of a $n$BA.

\begin{definition}
  Let $\mathbf A$ be a $n\mathrm{BA}$, $d$ be a nonempty subset of $\hat n$,  $0_i=\e_i$ and $1_j=\e_j$ ($i\in  d$ and $j\in\bar  d$). We define the following three reducts of $\mathbf A$:
\begin{itemize}
\item[(i)] The Church $d$-reduct $C_{d}(\mathbf A)=(A, t_d, 0_i,1_j)$.
\item[(ii)] The right Church $d$-reduct $R_{d}(\mathbf A)=(A, t_d, 0_i)$.
\item[(iii)] The skew $d$-reduct $S_{d}(\mathbf A)=(A, \land_d, \overline\vee_d, \setminus_d, 0_i)$.
\end{itemize}
\end{definition}

\emph{Notation}: If $d=\{j\}$ is a singleton, we write $C_{j}(\mathbf A)$ for $C_{\{j\}}(\mathbf A)$. Similarly for the other reducts and for the operations. For example, we write $t_j$ for $t_{\{j\}}$.

The Church $d$-reduct of a $n$BA is a $2$CA of factor elements.  It is a $2$BA if and only if all its elements are meet idempotents.

\begin{proposition} Let $\mathbf A$ be a $n\mathrm{BA}$.
\begin{itemize}
\item[(i)]  Every element of the Church $d$-reduct $C_{d}(\mathbf A)$   is a factor element (w.r.t. $t_d$).
\item[(ii)] The map $c:A \to A$, defined by $$c(x)=t_d(x,1_j,0_i),$$ is an endomorphism of the Church $d$-reduct $C_{d}(\mathbf A)$, whose image is the $2\mathrm{BA}$ of its $2$-central elements.
\item[(iii)]  If $ d=\bar j$, then $x$ is a $2$-central element of $C_{\bar j}(\mathbf A)$ iff it is $\land_{\bar j}$-idempotent (i.e., $x\land_{\bar j} x = x$).
Therefore,  $C_{\bar j}(\mathbf A)$ is a $2\mathrm{BA}$ iff every element is $\land_{\bar j}$-idempotent.
\end{itemize}
\end{proposition}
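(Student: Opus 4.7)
The overall plan is to exploit the subdirect embedding $\mathbf A \hookrightarrow \mathbf n^I$ of Corollary \ref{cor:stn} in order to reduce each substantive verification to a finite case analysis in the generator $\mathbf n$; part (i) drops out directly from Lemma \ref{lem:dec0}.

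For (i), Lemma \ref{lem:dec0}(2) says that $t_d(x,-,-)$ is a $2$-ary decomposition operator on $\mathbf A$ itself, i.e.\ satisfies (D1)--(D3) of Definition \ref{def:decomposition} relative to every operation in $\tau$; \emph{a fortiori} it satisfies those conditions in the smaller signature of $C_d(\mathbf A)$, so $x$ is a factor element w.r.t.\ $t_d$. In particular $C_d(\mathbf A)$ is a $2$CA in which the designated constants $1_j, 0_i$ play the roles of $\e_1$ and $\e_2$ respectively (by Lemma \ref{lem:dec0}(1)), and the map $c(x)=t_d(x,1_j,0_i)$ is precisely the term whose fixed points are, by Theorem \ref{thm:centrale}(1), the $2$-central elements of this $2$CA.

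For (ii) there are three items to verify: that $c$ preserves $t_d$, $0_i$, $1_j$; that its image equals the set of $2$-central elements of $C_d(\mathbf A)$; and that this image carries a $2$BA structure. Preservation of the constants is immediate from Lemma \ref{lem:dec0}(1). The homomorphism identity $c(t_d(x,y,z)) = t_d(c(x),c(y),c(z))$ is a pure $n$BA equation, so by Corollary \ref{cor:stn} it suffices to verify it in $\mathbf n$: setting $x=\e_k$ and splitting on whether $k\in\bar d$ (whence $t_d(x,y,z)=y$ and $c(x)=1_j$, forcing both sides to equal $c(y)$) or $k\in d$ (both sides equal $c(z)$) closes the check. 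The same reduction to $\mathbf n$ yields $c\circ c = c$, so every $c(x)$ is $2$-central, while conversely every $2$-central element is by definition fixed by $c$; hence the image of $c$ is exactly $\mathrm{Ce}_2(C_d(\mathbf A))$, which is a $2$BA by the canonical example recorded just after Definition \ref{def:ncentral}.

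For (iii), set $d=\bar j$. Unfolding definitions, $2$-centrality $t_{\bar j}(x,1_j,0_i)=x$ reads $q(x,z_1,\dots,z_n) = x$ with $z_j=\e_j$ and $z_k=\e_i$ for $k\neq j$, while $\land_{\bar j}$-idempotence $t_{\bar j}(x,x,0_i)=x$ is the same equation with $z_j$ replaced by $x$. Evaluating in $\mathbf n$ at $x=\e_k$, the first side equals $\e_j$ if $k=j$ and $\e_i$ otherwise, and the second equals $\e_k$ if $k=j$ and $\e_i$ otherwise, so both coincide with $\e_k$ exactly when $k\in\{i,j\}$. Thus both conditions cut out the same subset of $\mathbf n$, and the subdirect embedding propagates this equivalence element-by-element to $\mathbf A$. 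The final statement then follows by combining with (ii): $C_{\bar j}(\mathbf A)$ is a $2$BA iff every element is $2$-central iff every element is $\land_{\bar j}$-idempotent. The only step requiring a bit of attention is the pointwise lifting through the subdirect embedding in (ii)--(iii), but since the conditions involved are all equational, they transfer coordinatewise without issue.
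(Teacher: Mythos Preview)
Your proof is correct but proceeds along a different line from the paper's. For (ii) the paper derives the homomorphism identity $c(t_d(x,y,z))=t_d(c(x),c(y),c(z))$ and the idempotence $c\circ c=c$ by direct equational manipulation using axiom (B3), whereas you invoke Corollary~\ref{cor:stn} and check the identities by a case split in the generator $\mathbf n$. For (iii) the paper cites \cite[Proposition 3.6]{CS15}, which says that for factor elements the fixed-point condition $t_{\bar j}(x,1_j,0_i)=x$ is equivalent to the conjunction of $x\land_{\bar j}x=x$ and $x\lor_{\bar j}x=x$, and then verifies that $x\lor_{\bar j}x=x$ always holds via (B2)--(B4); you instead compare the solution sets of the two equations directly in $\mathbf n$ and lift the equivalence through the subdirect embedding. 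Your route is self-contained (it avoids the external reference) and pleasantly uniform, while the paper's syntactic derivations make explicit which axiom is doing the work at each step. One small remark: the fact that $\mathbf{Ce}_2$ of a $2$CA is a $2$BA is recorded as the example following Definition~\ref{mezzucci} rather than Definition~\ref{def:ncentral}, but this is only a reference slip and does not affect the argument.
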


\begin{proof} (i) By Lemma \ref{lem:dec0}(2).

(ii)  By (i) an element $x$ of the $2$CA $C_{d}(\mathbf A)$  is $2$-central iff $c(x)=x$.  The map $c$ is a homomorphism because 
$$c(t_d(x,y,z))=c(q(x,y/\bar d,z/d))=t_d(q(x,y/\bar d,z/d),1_j,0_i)=_{B3}$$ 
$$q(x,c(y)/\bar d,c(z)/d)=_{B3} q(c(x),c(y)/\bar d,c(z)/d)= t_d(c(x),c(y),c(z)).$$
The conclusion follows because $c(c(x))=t_d(t_d(x,1_j,0_i),1_j,0_i)=_{B3}c(x)$, for all $x$.

(iii) By \cite[Proposition 3.6]{CS15} a factor element $x$ satisfies the identity $t_{\bar j}(x,1_j,0_i)=x$ iff 
$x\land_{\bar j} x = x$ and $x\lor_{\bar j} x = x$. Then the conclusion follows if we prove that $x\lor_{\bar j} x = x$, for every $x\in A$:
$x\lor_{\bar j} x = t_{\bar j}(x,1_j,x) =_{B4} q(x,\e_j/ j,x/\bar j)=q(x,\e_j/j,q(x,\e_1,\dots,\e_n)/\bar j)=_{B2}q(x,\e_1,\dots,\e_n)=x$.
\end{proof}

Some properties of the derived binary operations are stated in the following lemma.


As a matter of notation, if $d=\{i_1,\dots,i_k\}$, then $\e_d/d$ means
$\e_{i_1}/i_1,\dots, \e_{i_k}/i_k$.

\begin{lemma}\label{lem:dec} Let $\mathbf A$ be a $n\mathrm{BA}$, $0_i=\e_i$ and $1_j=\e_j$ ($i\in  d$ and $j\in\bar  d$). The following conditions hold for every $x,y\in A$ and $d\subseteq \hat n$:
\begin{enumerate}
\item $x\:\barwedge_d\: y=q(x, y/\bar  d,\e_{d}/ d)$ and $x\:\overline\vee_d\: y=q(x,\e_{\bar  d}/\bar  d, y/ d)$;
\item $x\:\barwedge_d\: x=x$ and $x\:\overline\vee_d\: x = x$;
\item $\e_k\: \barwedge_d\: x=\e_k$ and $\e_k \land_d x=\e_i$, for every $k\in d$;
\item $\e_k\: \barwedge_d\: x= x = \e_k\land_d x$, for every $k\notin d$;
\item If $d=\bar j$, then $\overline\vee_{\bar j} = \lor_{\bar j}$;
\item If $d=\{ i\}$, then $\barwedge_{ i} = \land_{ i}$.
\end{enumerate}
\end{lemma}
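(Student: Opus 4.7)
The plan is to verify each of the six items by direct equational reasoning from the axioms (B0)--(B4) together with the unfolding of the definitions of $t_d$, $\barwedge_d$, $\overline\vee_d$, $\land_d$ and $\lor_d$. I would handle item (1) first, since items (5) and (6) reduce to it.

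For item (1), starting from $x\barwedge_d y = t_d(x,y,x) = q(x, y/\bar d, x/d)$, the goal is to replace the $x$'s sitting in positions $i\in d$ by the corresponding $\e_i$'s. To set up an application of (B2), I rewrite each of the $n$ middle arguments as a nested $q$: using (B4) I replace each $x$ (in positions $i\in d$) by $q(x,\e_1,\dots,\e_n)$, and using (B1) I replace each $y$ (in positions $i\in\bar d$) by $q(x,y,\dots,y)$. A single invocation of (B2) then extracts the diagonal, yielding $q(x, y/\bar d, \e_d/d)$ as required. The formula for $x\overline\vee_d y = q(x, x/\bar d, y/d)$ is symmetric: the roles of $d$ and $\bar d$ are swapped, i.e.\ I use (B4) on positions of $\bar d$ and (B1) on positions of $d$, and collapse with (B2) to obtain $q(x, \e_{\bar d}/\bar d, y/d)$.

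Items (2)--(4) are then immediate. Item (2) follows from (B1), since $q(x,x,\dots,x)=x$. Items (3) and (4) are read off from (B0), $q(\e_k, z_1,\dots,z_n)=z_k$, by inspecting the $k$-th coordinate of $q(\e_k, x/\bar d, \e_k/d)$ (for $\barwedge_d$) and $q(\e_k, x/\bar d, 0_i/d)$ (for $\land_d$), splitting on whether $k\in d$ or $k\in\bar d$.

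Items (5) and (6) are purely definitional once item (1) is available. For (5) one uses that $d=\bar j$ forces $\bar d=\{j\}$ and $1_j=\e_j$, so the expression for $x\overline\vee_{\bar j} y$ given by item (1) and the expression $q(x, 1_j/j, y/\bar j)$ for $x\lor_{\bar j} y$ literally coincide. For (6) one uses $\bar d=\bar i$ and $0_i=\e_i$, so that $\e_d/d = \e_i/i = 0_i/i$, whence item (1) for $x\barwedge_i y$ matches the definition of $x\land_i y$. The only real pitfall, if any, will be keeping the slash-notation bookkeeping straight when applying (B2) in item (1); the remaining calculations are mechanical.
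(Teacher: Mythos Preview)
Your proposal is correct and follows essentially the same route as the paper: for item (1) the paper also expands $x$ to $q(x,\e_1,\dots,\e_n)$ via (B4) and then contracts with (B2); your extra step of rewriting the $y$'s as $q(x,y,\dots,y)$ via (B1) just makes the (B2) application literal rather than implicit. Items (2)--(6) in the paper are handled exactly as you describe.
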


\begin{proof} (1) $x\:\barwedge_d\: y= t_d(x,y,x) = q(x,y/\bar d,x/ d) =_{B4} q(x,y/\bar d,q(x,\e_1,\dots,\e_n)/ d)=_{B2} 
q(x,y/\bar d,\e_d/ d)$.

(2) $x\:\barwedge_d\: x= q(x,x/\bar d,x/ d)=q(x,x\dots,x)=_{B1}x$.

(5) $x\:\overline\vee_{\bar j}\: y=_{(1)}q(x,\e_{ j}/ j, y/ \bar j)=t_{\bar j}(x,1_j,y)=x\lor_{\bar j} y$.

(6) Similar to (5).
\end{proof}

%
%


We now characterise the right Church and the skew reducts of $\mathbf A$.


%
%
%
%
%
%
%
%

\begin{proposition}\label{red} Let $d=\{i\}$. The right Church $i$-reduct $R_{ i}(\mathbf A)=(A,t_i,0_i)$ of a $n\mathrm{BA}$ $\mathbf A$ is a $\mathrm{SRCA}$.
\end{proposition}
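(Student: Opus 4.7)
My plan is to decompose the requirements into two parts: first, verify the defining RCA equation $t_i(\e_i,y,z)=z$; second, show that every $x\in A$ is semicentral in $R_i(\mathbf A)$, namely that (a) the binary operation $t_i(x,-,-)$ is a 2-ary decomposition operator on $R_i(\mathbf A)$, and (b) $t_i(x,x,\e_i)=x$.

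For the RCA equation, I would simply unfold the definition: $t_i(\e_i,y,z)=q(\e_i,y/\bar i,z/i)$, and then apply $(\mathrm{B0})$, which projects onto the $i$-th coordinate, namely $z$. This is immediate.

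For (a), I would appeal to Lemma \ref{lem:dec0}(2), which already tells us that $t_i(x,-,-)$ is a 2-ary decomposition operator on the ambient $n$BA $\mathbf A$. The only point needing a moment's thought is that this property restricts cleanly to the reduct $R_i(\mathbf A)$: axioms D1 and D2 are intrinsic to the function itself, and the homomorphism axiom D3 transfers because the operations of $R_i(\mathbf A)$---the constant $\e_i$ and the ternary $t_i$ (itself a $q$-term)---all lie in the term clone of $\mathbf A$. A direct check for the constant, $t_i(x,\e_i,\e_i)=q(x,\e_i,\dots,\e_i)=\e_i$ by $(\mathrm{B1})$, makes this explicit.

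For (b), I would use Lemma \ref{lem:dec}(1) with $d=\{i\}$ to rewrite
$$t_i(x,x,\e_i) \;=\; x\,\overline\vee_i\,\e_i \;=\; q(x,\e_{\bar i}/\bar i,\e_i/i) \;=\; q(x,\e_1,\dots,\e_n),$$
and then close the computation with $(\mathrm{B4})$. I do not anticipate any real obstacle; everything is packaged in the identities $(\mathrm{B0})$--$(\mathrm{B4})$ and in Lemmas \ref{lem:dec0} and \ref{lem:dec}. The only mild subtlety is the index bookkeeping: recognizing that the tagged list $(\e_{\bar i}/\bar i,\e_i/i)$ rearranges to the canonical sequence $(\e_1,\dots,\e_n)$, at which point $(\mathrm{B4})$ delivers the result.
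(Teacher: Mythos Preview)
Your proof is correct and follows essentially the same approach as the paper's: invoke Lemma~\ref{lem:dec0}(2) to obtain the factor-element property and verify the remaining semicentrality identity $t_i(x,x,\e_i)=x$. The only cosmetic difference is that the paper derives this identity via Lemma~\ref{lem:dec}(2),(6) (i.e.\ $x\land_i x = x\:\barwedge_i\:x = x$) whereas you route it through Lemma~\ref{lem:dec}(1) and $(\mathrm{B4})$; your additional remarks on the RCA axiom and on D3 restricting to the reduct are more explicit than, but entirely in the spirit of, the paper's argument.
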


\begin{proof}
 By Lemma \ref{lem:dec0}(2) every element of $A$ is a factor element (w.r.t. $ t_{ i}$), and  by Lemma \ref{lem:dec}(2),(6) every element satisfies $x\land_i x=x\ \bar\land_i\ x= x$. Then every element of $A$ is semicentral and $R_{ i}(\mathbf A)$ is a $\mathrm{SRCA}$.
\end{proof}

Recall that if $d=\{ i\}$ then $\land_{ i} = \barwedge_{ i}$.

\begin{corollary} Let $d=\{i\}$. The skew $i$-reduct $S_{ i}(\mathbf A)=(A, \land_{ i}, \overline\vee_{ i}, \setminus_{ i}, 0_i)$ of $\mathbf A$ is a right-handed skew $\mathrm{BA}$ with bottom element $0_i=\e_i$ and maximal elements $\e_1,\dots,\e_{i-1},\e_{i+1},\dots,\e_n$.
\end{corollary}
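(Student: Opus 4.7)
The plan is to reduce the corollary to Proposition \ref{red} combined with Theorem \ref{thm:equiv}. By Proposition \ref{red}, the right Church $i$-reduct $R_i(\mathbf A) = (A, t_i, 0_i)$ is an SRCA, and Theorem \ref{thm:equiv} supplies the term equivalence between SRCAs and right-handed skew BAs. Under this equivalence the skew BA operations are recovered from the ternary operator $t_i$ and the zero $0_i$ via the dictionary $x \land y = t_i(x,y,0_i)$, $\ x \lor y = t_i(x,x,y)$, and $\ y \setminus x = t_i(x,0_i,y)$.

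First I would verify that these three derived operations coincide with the primitive operations of $S_i(\mathbf A)$. By definition $x \land_i y = t_i(x,y,0_i)$ and $y \setminus_i x = t_i(x,0_i,y)$; moreover $x \:\overline\vee_i\: y = t_i(x,x,y)$ by the defining identity $x \:\overline\vee_d\: y = t_d(x,x,y)$ specialized to $d = \{i\}$. Hence $S_i(\mathbf A)$ is precisely the right-handed skew BA term-equivalent to the SRCA $R_i(\mathbf A)$, which establishes the main assertion.

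It remains to identify the bottom and the maximal elements of the natural partial order $\leq$ defined by $x \leq y$ iff $x \land_i y = x = y \land_i x$. That $0_i = \e_i$ is the bottom follows from axiom (S2) of skew BAs, and can also be checked directly from (B0) and (B1). For the maximality of $\e_j$ with $j \neq i$, the key observation is that $\e_j \land_i y = t_i(\e_j, y, 0_i) = q(\e_j, y/\bar i, \e_i/i)$; since $j \in \bar i$, axiom (B0) picks the $y$-slot, yielding $\e_j \land_i y = y$. Thus if $\e_j \leq y$ then $\e_j = \e_j \land_i y = y$, proving maximality. The substantive content has been absorbed in Proposition \ref{red} (semicentrality of every element of $R_i(\mathbf A)$, extracted from the $n\mathrm{BA}$ axioms) and in Theorem \ref{thm:equiv}; I do not expect a genuine obstacle here, only the identification of operations and these two elementary computations.
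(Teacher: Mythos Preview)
Your proposal is correct and follows essentially the same route as the paper: both invoke Proposition \ref{red} together with Theorem \ref{thm:equiv} to obtain the right-handed skew BA structure, and both deduce maximality of $\e_j$ ($j\neq i$) from the identity $\e_j\land_i y=y$. The only cosmetic difference is that the paper packages the operation-matching and the maximality computation as citations of Lemma \ref{lem:dec}(6) and (4), whereas you unfold these directly from the definitions and axiom (B0).
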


\begin{proof} By Theorem \ref{thm:equiv}, by Lemma \ref{lem:dec}(6) and by Proposition \ref{red} $S_{ i}(\mathbf A)$ is a right-handed skew $\mathrm{BA}$. The natural partial order of  $S_{ i}(\mathbf A)$, defined in Section \ref{skewskew}, will be denoted by $\leq^i$.  The element $\e_k$ ($k\neq i$) is maximal, because, for every $x\neq\e_k$,
$\e_k \leq^i x$ implies $\e_k= \e_k \land_i x$,  contradicting  Lemma \ref{lem:dec}(4).
\end{proof}

Let $\mathbf A$ be a $n\mathrm{BA}$ and $x\in A$. The element $x$ is $n$-central on $\mathbf A$ and determines the $n$-tuple $(\theta^\mathbf A(x,\e_1),\dots,\theta^\mathbf A(x,\e_n))$ of complementary factor congruences on $\mathbf A$. Similarly, the element $x$ is semicentral on the SRCA $R_{ i}(\mathbf A)$ and determines the pair $(\phi^i_x,\bar\phi^i_x)$ of complementary factor congruences on  $R_{ i}(\mathbf A)$, where $\phi^i_x = \theta^{R_{ i}(\mathbf A)}(x,\e_i)$. The following proposition compares these factor congruences.

\begin{proposition} Let $\mathbf A$ be a $n\mathrm{BA}$ and $x\in A$. Then we have:
$$\phi^i_x= \theta^{R_{ i}(\mathbf A)}(x,\e_i)=\theta^\mathbf A(x,\e_i)=\{(a,b): t_i(x,a,b)=a\};\qquad\bar\phi^i_x = \bigcap_{j\neq i} \theta^\mathbf A(x,\e_j).$$
\end{proposition}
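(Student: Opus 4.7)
The plan is to reduce both identities to the uniqueness clause in Theorem \ref{thm:centrale}(3), applied to $x$ seen as an $n$-central element of $\mathbf A$, and then to invoke the \cite{CS15} lemma recalled in Section \ref{skewskew} in order to bring in the pair $(\phi^i_x,\bar\phi^i_x)$.

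First, I would abbreviate $\psi_j := \theta^\mathbf A(x,\e_j)$ for $j\in \hat n$. Since $\mathbf A$ is an $n$BA, every element, hence $x$ itself, is $n$-central, so Theorem \ref{thm:centrale}(3) tells me that $(\psi_1,\dots,\psi_n)$ is a tuple of complementary factor congruences and that $q(x,a_1,\dots,a_n)$ is the \emph{unique} element $u\in A$ with $a_j\,\psi_j\,u$ for every $j\in\hat n$. Unravelling $t_i(x,a,b)=q(x,a/\bar i,b/i)$, this unique $u$ is characterised by $a\,\psi_j\,u$ for $j\neq i$ together with $b\,\psi_i\,u$.

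Second, I would exploit this uniqueness twice. If $a\,\psi_i\,b$, then $u:=a$ satisfies all the defining conditions (by reflexivity of $\psi_j$ for $j\neq i$ and by the hypothesis at $j=i$), so by uniqueness $t_i(x,a,b)=a$; conversely, if $t_i(x,a,b)=a$, the defining conditions immediately read $b\,\psi_i\,a$. This yields $\psi_i = \{(a,b) : t_i(x,a,b)=a\}$. A completely symmetric argument, this time taking $u:=b$, yields $\bigcap_{j\neq i}\psi_j = \{(a,b) : t_i(x,a,b)=b\}$.

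Third, by Proposition \ref{red} the element $x$ is semicentral in the SRCA $R_i(\mathbf A)$, and the \cite{CS15} lemma then supplies $\phi^i_x = \theta^{R_i(\mathbf A)}(x,\e_i) = \{(a,b) : t_i(x,a,b)=a\}$ together with $\bar\phi^i_x=\{(a,b):t_i(x,a,b)=b\}$. Splicing this with the equalities of the previous paragraph delivers both claimed chains at once: $\phi^i_x = \theta^{R_i(\mathbf A)}(x,\e_i) = \psi_i = \theta^\mathbf A(x,\e_i)$, and $\bar\phi^i_x = \bigcap_{j\neq i}\theta^\mathbf A(x,\e_j)$. The only delicate point is the careful bookkeeping of the uniqueness clause in Theorem \ref{thm:centrale}(3); I do not anticipate any genuine obstacle beyond that.
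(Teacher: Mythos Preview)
The paper states this proposition without proof, so there is no paper argument to compare against. Your proof is correct and is essentially the intended one: the equality $\theta^{\mathbf A}(x,\e_i)=\{(a,b):t_i(x,a,b)=a\}$ and the dual $\bigcap_{j\neq i}\theta^{\mathbf A}(x,\e_j)=\{(a,b):t_i(x,a,b)=b\}$ follow immediately from the uniqueness clause in Theorem~\ref{thm:centrale}(3) applied to $q(x,a/\bar i,b/i)$, and the identification with $(\phi^i_x,\bar\phi^i_x)$ is exactly \cite[Proposition~3.9]{CS15} together with Proposition~\ref{red}. The only cosmetic remark is that the first link $\phi^i_x=\theta^{R_i(\mathbf A)}(x,\e_i)$ is already given in the text preceding the proposition as part of the definition of $\phi^i_x$, so you need not re-derive it; but doing so does no harm.
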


\subsection{A bunch of isomorphisms}

It turns out that all the skew reducts of a $n$BA $\mathbf A$ are isomorphic. In order to prove this,
we study the action of the symmetric group $S_n$ on  $\mathbf A$. 
The first part of this section is rather technical.

Let $\mathbf A$ be a $n$BA. 
For every permutation $\gs$ of the symmetric group $S_n$ and $x,y_1,\dots,y_n\in A$,
we define a sequence $u_s$ ($1\leq s\leq n+1$) parametrised by another permutation $\tau$:
$$u_{n+1}=y_{\tau n};\qquad u_s= t_{\tau s}(x,u_{s+1},y_{\gs \tau s})\quad (s\leq n).$$
In the following lemma we prove that $u_1$ is independent of the permutation $\tau$.

Notice that $u_n=q(x,y_{\tau n}/\overline{\tau n},y_{\gs \tau s}/\tau n)$ and  $u_s=q(x,u_{s+1}/\overline{\tau s},y_{\gs \tau s}/\tau s)$.

\begin{lemma}\label{lem:strano} We have:
 $$u_s=q(x,y_{\tau n}/\overline{\{\tau1,\tau2\dots,\tau(s-1)\}},y_{\gs\tau s}/\tau s,y_{\gs\tau(s+1)}/\tau(s+1),\dots,y_{\gs\tau n}/\tau n).$$
 Then $u_1 = q(x,y_{\gs\tau 1}/\tau 1,y_{\gs\tau 2}/\tau 2,\dots,y_{\gs\tau n}/\tau n) = q(x,y_{\gs 1},y_{\gs 2},\dots,y_{\gs n})$.

\end{lemma}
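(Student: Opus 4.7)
The plan is to establish the formula for $u_s$ by downward induction on $s$ from $n$ to $1$, and then to deduce the final equality as a reindexing observation. The main algebraic tool is axiom (B2) of an $n$BA, which collapses a nested $q$-term of the form $q(x, q(x,\dots),\dots,q(x,\dots))$ to its diagonal entries; axiom (B1), i.e.\ $q(y,x,\dots,x)=x$, is used to massage a constant slot into the required nested form.

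For the base case $s=n$, a direct unfolding of the definition gives $u_n = t_{\tau n}(x, y_{\tau n}, y_{\gs\tau n}) = q(x, y_{\tau n}/\overline{\tau n}, y_{\gs\tau n}/\tau n)$, which matches the claim since $\overline{\tau n} = \{\tau 1,\dots,\tau(n-1)\}$. For the inductive step, starting from $u_s = q(x, u_{s+1}/\overline{\tau s}, y_{\gs\tau s}/\tau s)$, I would apply (B1) to rewrite the singleton entry $y_{\gs\tau s}$ as $q(x, y_{\gs\tau s},\dots,y_{\gs\tau s})$, so that every slot of the outer $q$ is itself a $q$-term with first argument $x$. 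Axiom (B2) then reads off the diagonal: at each position $i\in\overline{\tau s}$ the $i$-th coordinate of $u_{s+1}$ is retained, while at position $\tau s$ the entry $y_{\gs\tau s}$ is retained. Substituting the inductive form of $u_{s+1}$, the positions in $\{\tau 1,\dots,\tau s\}\cap\overline{\tau s}=\{\tau 1,\dots,\tau(s-1)\}$ carry the background value $y_{\tau n}$, each position $\tau j$ with $j\geq s+1$ carries $y_{\gs\tau j}$, and position $\tau s$ now carries $y_{\gs\tau s}$ --- exactly the claimed expression at level $s$.

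Specialising to $s=1$ makes the background class $\{\tau 1,\dots,\tau(s-1)\}$ empty while the singletons $\tau 1,\dots,\tau n$ already exhaust $\hat n$, yielding $u_1 = q(x, y_{\gs\tau 1}/\tau 1,\dots,y_{\gs\tau n}/\tau n)$. Reindexing by $j=\tau k$, which sweeps through $\hat n$ since $\tau\in S_n$, identifies the value at position $j$ as $y_{\gs j}$, so $u_1 = q(x, y_{\gs 1},\dots,y_{\gs n})$, visibly independent of $\tau$. The main delicate point is the bookkeeping: one must verify that the class carrying the background value $y_{\tau n}$ shrinks by exactly the single element $\tau s$ at each inductive step --- precisely the element promoted to the new singleton carrying $y_{\gs\tau s}$. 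Once this is noted, the proof reduces to the standard ``nested $q$ collapses to its diagonal'' move provided by (B1)--(B2).
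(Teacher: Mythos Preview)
Your proof is correct and follows essentially the same approach as the paper: downward induction on $s$, with the inductive step justified by (B2) after substituting the inductive form of $u_{s+1}$. The only difference is cosmetic --- you make explicit the use of (B1) to rewrite the constant slot $y_{\gs\tau s}$ as $q(x,y_{\gs\tau s},\dots,y_{\gs\tau s})$ so that (B2) applies literally, whereas the paper simply writes $=_{\mathrm{B2}}$ for the whole step.
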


\begin{proof}
Assume that 
$$u_{s+1}=q(x,y_{\tau n}/\overline{\{\tau1,\tau2\dots,\tau s\}},y_{\gs\tau(s+1)}/\tau(s+1),\dots,y_{\gs\tau n}/\tau n).$$ Then we have:
\[
\begin{array}{lll}
u_s  & =  &  t_{\tau s}(x,u_{s+1},y_{\gs \tau s})  \\
  & =  & q(x,u_{s+1}/\overline{\tau s}, y_{\gs\tau s}/\tau s)  \\
  &  =_{B2} &   q(x,y_{\tau n}/\overline{\{\tau1,\tau2\dots,\tau s-1\}},y_{\gs\tau s}/\tau s,y_{\gs\tau(s+1)}/\tau(s+1),\dots,y_{\gs\tau n}/\tau n).
\end{array}
\]
\end{proof}


%
%

We define
$$x^\sigma= q(x,\e_{\gs 1},\e_{\gs 2},\dots,\e_{\gs n}).$$

The permutation $(ij)$ exchanges $i$ and $j$: $(ij)(i)=j$ and $(ij)(j)=i$.

\begin{lemma}\label{sigma} The following conditions hold, for all permutations $\sigma$ and $\tau$: 
\begin{enumerate}
\item   $t_{i}(x, t_{j}(x,y,z),u)= t_{j}(x, t_{i}(x,y,u),z)=q(x, y/\overline{\{i,j\}},z/j,u/i)$ ($i\neq j$);
 \item $q(x, y_{\gs 1},\dots, y_{\gs n})=t_{\tau 1}(x, t_{\tau 2}(x, t_{\tau 3}(x,(\dots t_{\tau n}(x,y_{\tau n},y_{\gs \tau n})),y_{\gs \tau 3})),y_{\gs \tau 2}),y_{\gs \tau 1})$;


\item $x^{\gs}= t_{\tau 1}(x, t_{\tau 2}(x, t_{\tau 3}(x,(\dots t_{\tau n}(x,\e_{\tau n},\e_{\gs \tau n}),\e_{\gs \tau 3})),\e_{\gs \tau 2}),\e_{\gs \tau 1})$;
\item $q(x^\gs, y_1,\dots, y_n)=q(x, y_{\gs 1},\dots, y_{\gs n})$;
\item $x^{(ij)} =  t_{i}(x, t_{j}(x,x,\e_i),\e_j) =  t_{j}(x, t_{i}(x,x,\e_j),\e_i)$;
\item $x^{\gt\circ\gs}=(x^{\gs})^{\gt}$;
\item $q(x, y_1,\dots, y_n)^\gs = q(x,(y_1)^{\gs},\dots, (y_n)^{\gs})$.
\end{enumerate}
 
\end{lemma}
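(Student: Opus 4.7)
The plan is to establish the seven clauses in an order that makes each depend only on its predecessors together with the axioms (B0)--(B4). Clauses (1) and (5) are pure bookkeeping identities; clauses (2) and (3) merely unroll Lemma~\ref{lem:strano}; and the central identity (4)---an ``intertwining'' of $q$ with the action $(-)^\gs$---is obtained by a single application of (B3). Clauses (6) and (7) then follow from (4) and one further instance of (B3).

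For (1), I would expand the definitions $t_i(x,a,b)=q(x,a/\bar i,b/i)$ and $t_j(x,y,z)=q(x,y/\bar j,z/j)$ and apply (B2) to absorb the inner $q$: at each position $k\notin\{i,j\}$ the inner $q$ is replaced by its $k$-th coordinate $y$, at position $j$ by $z$, while position $i$ keeps $u$. The identity for $t_j(x,t_i(x,y,u),z)$ gives the same normal form by the same bookkeeping. Clause (2) is the recursion of Lemma~\ref{lem:strano} unrolled from $s=1$ down to $s=n$, and (3) is (2) applied with $y_i:=\e_i$, in view of the definition $x^\gs=q(x,\e_{\gs 1},\dots,\e_{\gs n})$. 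For (5), I instantiate (1) with $y:=x$, $z:=\e_i$, $u:=\e_j$ to get $q(x,x/\overline{\{i,j\}},\e_i/j,\e_j/i)$; using (B4) and (B2), I rewrite each $x$ at coordinate $k\notin\{i,j\}$ as $\e_k$, obtaining $q(x,\e_{(ij)1},\dots,\e_{(ij)n})=x^{(ij)}$, and the second equality is the symmetric form of (1).

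The main step is (4). I apply (B3) with outer first argument $x$, with the column-zero data set to $x^{i0}:=\e_{\gs i}$, and with each further column constant along rows: $x^{ik}:=y_k$ for $k\geq 1$. The right-hand side of (B3) then collapses: its leading entry is $q(x,\e_{\gs 1},\dots,\e_{\gs n})=x^\gs$, and each subsequent entry is $q(x,y_k,\dots,y_k)=y_k$ by (B1), so the right-hand side equals $q(x^\gs,y_1,\dots,y_n)$. On the left-hand side of (B3), the $i$-th interior $q$ is $q(\e_{\gs i},y_1,\dots,y_n)=y_{\gs i}$ by (B0), so the left-hand side equals $q(x,y_{\gs 1},\dots,y_{\gs n})$. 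This establishes (4). Clause (6) is then immediate: $(x^\gs)^\gt=q(x^\gs,\e_{\gt 1},\dots,\e_{\gt n})$, and applying (4) with $y_k:=\e_{\gt k}$ converts this into $q(x,\e_{\gt\gs 1},\dots,\e_{\gt\gs n})=x^{\gt\circ\gs}$.

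For (7), I would mirror the template used for (4): apply (B3) with outer first argument $x$, column zero equal to $(y_1,\dots,y_n)$, and each column $k\geq 1$ constantly $\e_{\gs k}$. The right-hand side then collapses by (B1) on the constant columns to $q(q(x,y_1,\dots,y_n),\e_{\gs 1},\dots,\e_{\gs n})=q(x,y_1,\dots,y_n)^\gs$, while the left-hand side reads $q(x,q(y_1,\e_{\gs 1},\dots,\e_{\gs n}),\dots,q(y_n,\e_{\gs 1},\dots,\e_{\gs n}))=q(x,y_1^\gs,\dots,y_n^\gs)$. The main obstacle is spotting the correct substitution templates for (B3) in (4) and (7); once the templates are chosen, (B0) and (B1) collapse the diagonal and constant columns respectively, and (B3) delivers the identity.
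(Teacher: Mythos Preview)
Your proof is correct and follows essentially the same route as the paper: (1) and (5) are obtained by expanding $t_i$, $t_j$ and absorbing via (B2) and (B4); (2) unrolls Lemma~\ref{lem:strano} and (3) specialises it to $y_i=\e_i$; and (4), (6), (7) are single applications of (B3) with the constant/diagonal columns collapsed by (B1) and (B0). The paper simply writes ``$=_{B3}$'' at the key steps without spelling out the substitution templates you describe, so your version is a more explicit rendering of the same argument.
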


\begin{proof} 
(1)

$$\begin{array}{lll} t_{i}(x, t_{j}(x,y,z),u) &=& q(x,   t_{j}(x,y,z)/\bar i,u/i)\\
&=& q(x, q(x, y/\bar j ,z/j)/\bar i,u/i)\\
&=_{B2}& q(x, y/\overline{\{i,j\}},z/j,u/i)\\
&=_{B2}&q(x, q(x, y/\bar i ,u/i)/\bar j,z/j) \\
&=& q(x,   t_{i}(x,y,u)/\bar j,z/j)\\
&=& t_{j}(x, t_{i}(x,y,u),z).
\end{array}$$

(2) is the unfolding of the definition of $u_1$.

(3) follows from (2) by putting $y_i=\e_i$.

(4) $q(x^\gs, y_1,\dots, y_n)=q(q(x,\e_{\gs 1},\dots,\e_{\gs n}), y_1,\dots, y_n)=_{B3}q(x, y_{\gs 1},\dots, y_{\gs n})$.

(5) $$\begin{array}{llll}
t_{i}(x, t_{j}(x,x,\e_i),\e_j)&=&t_{j}(x, t_{i}(x,x,\e_j),\e_i)&\text{by (1)}\\
&=&q(x, x/\overline{\{i,j\}},\e_i/j,\e_j/i)&\text{by (1)}\\
&=&q(x, q(x,\e_1,\dots,\e_n)/\overline{\{i,j\}},\e_i/j,\e_j/i)&\text{by (B4)}\\
&=&x^{(ij)}&\text{by (B2)}
\end{array}$$


(7) $$\begin{array}{lll} q(x, y_1,\dots, y_n)^\gs &=& q(q(x, y_1,\dots, y_n),\e_{\gs 1},\e_{\gs 2},\dots,\e_{\gs n})\\
&=_{B3}&
q(x,q(y_1,\e_{\gs 1},\e_{\gs 2},\dots,\e_{\gs n}),\dots,q(y_n,\e_{\gs 1},\e_{\gs 2},\dots,\e_{\gs n}))\\ 
&=& q(x,y^{\gs}_1,\dots, y_n^{\gs}). 
\end{array}$$
\end{proof}

\begin{theorem} For every transposition $(rk)\in S_n$, the map  $x\mapsto x^{(rk)}$ defines an isomorphism from $\mathrm{S}_{r}(\mathbf A)$ onto  $\mathrm{S}_k(\mathbf A)$.
\end{theorem}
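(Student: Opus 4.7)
\medskip\noindent\textbf{Proof sketch.} Set $\sigma=(rk)$. The plan is to verify four properties of the map $x\mapsto x^\sigma$: that it is an involution (hence a bijection), that it sends $0_r$ to $0_k$, and that it intertwines each of the three binary operations $\land$, $\overline\vee$, $\setminus$ of the skew reducts.

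The first two facts follow immediately from the preceding lemmas. By Lemma~\ref{sigma}(6) we have $(x^\sigma)^\sigma = x^{\sigma\circ\sigma} = x^{\mathrm{id}}$, and $x^{\mathrm{id}}=q(x,\e_1,\dots,\e_n)=x$ by axiom~(B4), so $x\mapsto x^\sigma$ is an involution. For the distinguished constant, axiom~(B0) gives $\e_r^\sigma = q(\e_r,\e_{\sigma 1},\dots,\e_{\sigma n}) = \e_{\sigma r} = \e_k = 0_k$.

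The engine for operation preservation is the combination of Lemma~\ref{sigma}(7), which says $(-)^\sigma$ distributes over $q$ coordinate-wise in the last $n$ arguments, with Lemma~\ref{sigma}(4), which gives $q(x^\sigma,y_1,\dots,y_n) = q(x,y_{\sigma 1},\dots,y_{\sigma n})$. Since $\sigma^{-1}(k)=r$, the ``distinguished'' argument sitting at position $k$ in the expansion of $\star_k$ is moved to position $r$ by Lemma~\ref{sigma}(4), matching the expansion of $\star_r$. Concretely, for $\land$:
\begin{align*}
(x\land_r y)^\sigma &= q(x,\,y/\bar r,\,\e_r/r)^\sigma = q(x,\,y^\sigma/\bar r,\,\e_k/r),\\
x^\sigma\land_k y^\sigma &= q(x^\sigma,\,y^\sigma/\bar k,\,\e_k/k) = q(x,\,y^\sigma/\bar r,\,\e_k/r),
\end{align*}
so the two sides agree. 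The verifications for $\overline\vee_r$ and $\setminus_r$ are structurally identical: each such operation places a prescribed element ($x$ for $\overline\vee$, $\e_r$ for $\setminus$) at position $r$ and another at the positions in $\bar r$, and the same two-step computation via Lemmas~\ref{sigma}(4) and~(7) yields the corresponding equality.

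The main obstacle is purely notational: one must carefully track which position carries the ``special'' entry after Lemma~\ref{sigma}(4) permutes the coordinates, and confirm that $\e_r^\sigma=\e_k$ is applied in exactly the right slot. Conceptually, nothing more is going on than that $\sigma$ permutes the roles of the constants $\e_1,\dots,\e_n$, and that Lemma~\ref{sigma}(4),(6),(7) lift this symmetry to a bunch of isomorphisms among the skew reducts of $\mathbf A$.
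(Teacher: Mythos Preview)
Your proposal is correct and follows essentially the same route as the paper: bijectivity via Lemma~\ref{sigma}(6) and (B4), the constant via $\e_r^{(rk)}=\e_k$, and operation preservation via Lemma~\ref{sigma}(7) followed by Lemma~\ref{sigma}(4). The only difference is that the paper checks the homomorphism once at the level of the ternary generator $t_r$ (showing $t_r(x,y,z)^{(rk)}=t_k(x^{(rk)},y^{(rk)},z^{(rk)})$), which automatically covers all three derived binary operations, whereas you verify $\land$, $\overline\vee$, $\setminus$ separately.
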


\begin{proof} The map $x\mapsto x^{(rk)}$ is bijective because $(x^{(rk)})^{(rk)} = x$ by Lemma \ref{sigma}(6) and (B4). The map is a homomorphism:
 $t_{r}(x,y,z)^{(rk)}=_{L.\ref{sigma}(7)} t_{r}(x,y^{(rk)},z^{(rk)}) =
t_{r}((x^{(rk)})^{(rk)},y^{(rk)},z^{(rk)}) = q((x^{(rk)})^{(rk)},y^{(rk)}/\overline{r},z^{(rk)}/r)=_{L.\ref{sigma}(4)} q(x^{(rk)},y^{(rk)}/\overline{k},z^{(rk)}/k)= t_{k}(x^{(rk)},y^{(rk)},z^{(rk)})$. Moreover, $(\e_{r})^{(rk)}=\e_{(rk)r}=\e_k$.
\end{proof}


\subsection{A general representation theorem for right-handed skew BA}
In this section we show that, for every $n\geq 3$, there is a representation of an arbitrary right-handed skew BA within
a suitable $n$BA of $n$-partitions (described in Example \ref{exa:parapa}).

\begin{theorem} Let $n\geq 3$. Then
 every right-handed skew $\mathrm{BA}$ can be embedded into the skew $ i$-reduct $S_{ i}(\mathbf A)$  of a suitable $n\mathrm{BA}$ $\mathbf A$ of $n$-partitions.
\end{theorem}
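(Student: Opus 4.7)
The plan is to combine Leech's representation of right-handed skew BAs as algebras of partial functions with a concrete identification of $S_i(\mathbf n^I)$ as one such partial-function algebra. Without loss of generality I take $i=n$, since the isomorphisms $S_r(\mathbf A)\cong S_k(\mathbf A)$ of the previous subsection then transport the result to any $i$. The key observation is that, identifying $\mathbf n^I$ with $\mathrm{Par}_n(I)$ (Example \ref{exa:parapa}), the assignment
$$f\;\longmapsto\;\tilde f,\qquad \mathrm{dom}(\tilde f)=\{x\in I:f(x)\neq\e_n\},\quad \tilde f(x)=f(x)\text{ there},$$
sets up a bijection between the universe of $S_n(\mathbf n^I)$ and $\mathcal F(I,\{\e_1,\dots,\e_{n-1}\})$. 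Unfolding $t_n(x,y,z)=q(x,y/\overline n,z/n)$ and applying Lemma \ref{lem:dec}, one checks that $\land_n,\overline\vee_n,\setminus_n$ translate exactly into the partial-function operations of Example \ref{partial-exa}, so this bijection is a skew-BA isomorphism.

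The statement thus reduces to embedding an arbitrary right-handed skew BA $\mathbf S$ into $\mathcal F(I,\{\e_1,\dots,\e_{n-1}\})$ for some set $I$. By the Leech representation cited in the Introduction, $\mathbf S$ already embeds into $\mathcal F(X,Y)$ for suitable $X,Y$, so I only need to shrink the codomain. For this I use the map
$$\phi:\mathcal F(X,Y)\longrightarrow\mathcal F(X\times Y,\{0,1\}),\qquad \mathrm{dom}(\phi(f))=\mathrm{dom}(f)\times Y,$$
with $\phi(f)(x,y)=1$ if $f(x)=y$ and $\phi(f)(x,y)=0$ if $x\in\mathrm{dom}(f)$ and $f(x)\neq y$. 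Since $n\geq 3$ forces $n-1\geq 2$, any injection $\{0,1\}\hookrightarrow\{\e_1,\dots,\e_{n-1}\}$ yields an inclusion $\mathcal F(X\times Y,\{0,1\})\hookrightarrow\mathcal F(X\times Y,\{\e_1,\dots,\e_{n-1}\})$, and this is the only place where the hypothesis $n\geq 3$ enters. Composing these maps with the isomorphism of the previous paragraph provides the desired embedding $\mathbf S\hookrightarrow S_n(\mathrm{Par}_n(X\times Y))$.

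The main technical step is verifying that $\phi$ is a skew-BA homomorphism. Injectivity is immediate: $\mathrm{dom}(f)$ is the first projection of $\mathrm{dom}(\phi(f))$, and on each nonempty column the unique $y$ with $\phi(f)(x,y)=1$ recovers $f(x)$. Preservation of $\lor$ and $\setminus$ is routine since these operations only inspect domains and copy values from a designated argument. The sensitive point is the non-commutative meet: $f\land g$ has domain $\mathrm{dom}(f)\cap\mathrm{dom}(g)$ but carries the \emph{value of $g$}, so I must check that $\phi(f)\land\phi(g)$ has domain $(\mathrm{dom}(f)\cap\mathrm{dom}(g))\times Y$ and outputs $\phi(g)(x,y)$ rather than $\phi(f)(x,y)$. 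This goes through because $\mathrm{dom}(\phi(h))$ depends only on $\mathrm{dom}(h)$, while on the overlap the value is inherited from the second argument by definition of $\land$ in $\mathcal F(X\times Y,\{0,1\})$.
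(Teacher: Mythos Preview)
Your proof is correct and follows essentially the same two-step architecture as the paper (Leech embedding into a partial-function algebra, then identification of that algebra with a skew reduct of an $n$-partition algebra), but the execution differs in one substantive way.

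The paper invokes a sharper form of Leech's theorem (\cite[Corollary 1.14]{Leech}) which already yields an embedding into $\mathcal F(X,\{1,2\})$, i.e.\ with a \emph{two-element} codomain. It then defines directly the map $f\mapsto f^*=(f^{-1}(1),f^{-1}(2),\emptyset,\dots,\overline{\mathrm{dom}(f)},\dots,\emptyset)$ into $\mathrm{Par}_n(X)$ and checks that it preserves $\land_i$ (and asserts the other operations are similar). Your argument instead uses only the general Leech embedding into $\mathcal F(X,Y)$ with arbitrary $Y$, and compensates by constructing the codomain-shrinking homomorphism $\phi:\mathcal F(X,Y)\to\mathcal F(X\times Y,\{0,1\})$ explicitly. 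This is where your extra work lies, and it is carried out correctly: the point you flag about $\land$ is exactly the right one, and your verification that values are inherited from the second argument is what makes it go through.

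Your route is thus slightly longer but more self-contained (it needs only the weaker Leech statement quoted in the Introduction), and your identification of $S_n(\mathbf n^I)$ with $\mathcal F(I,\{\e_1,\dots,\e_{n-1}\})$ via $f\mapsto\tilde f$ is a clean conceptual inverse of the paper's ad hoc map $f\mapsto f^*$. Either approach makes equally clear why the hypothesis $n\geq 3$ is needed: one must have at least two non-zero values available in the skew reduct.
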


\begin{proof} 
 
(a) By \cite[Corollary 1.14]{Leech} 
every right-handed skew $\mathrm{BA}$ can be embedded into an algebra of partial functions with codomain the set $\{1, 2\}$ (see Example \ref{partial-exa}), where  $0=\emptyset$ is the empty function, 
$f\land g = g|_{G\cap F}$, $f\lor g = f\cup g|_{G\cap \overline F}$ and  $g\setminus f = g|_{G\cap \overline F}$ (with $F,G$ and $H$ the domains of the functions $f,g,h$, respectively).

(b) By Corollary \ref{cor:field-partitions} every $n$BA is isomorphic to a $n$BA of $n$-partitions of a suitable set $I$ (see Examples \ref{exa:partition} and \ref{exa:parapa}). 
If $P=(P_1,\dots,,P_n)$ and $Q=(Q_1,\dots,Q_n)$ are $n$-partitions of $I$, then 
%
\begin{equation}\label{ee}P\land_{ i} Q =  t_{ i}(P,Q,\e_i)=q( P,Q/\bar i, \e_i/i)=
(\overline{P_i}\cap Q_1,\dots, P_i\cup (\overline{P_i}\cap Q_i),
\dots,\overline{P_i}\cap Q_n).\end{equation}
The other operations can be similarly defined.

(c) We define an injective function $^*$ between the set of partial functions from a set $I$ into  $\{1, 2\}$ and the set of $n$-partitions of $I$. If
$f:I\rightharpoonup \{1, 2\}$ is a partial function, then $f^*=(P_1,\dots,P_n)$ is the following $n$-partition of $I$: $P_1 = f^{-1}(1)$, $P_2 = f^{-1}(2)$, $P_i=A\setminus \mathrm{dom}(f)$ and $P_k=\emptyset$ for any $k\neq 1,2,i$.

(d) The map $^*$ preserves the meet. Let $f:F\to  \{1, 2\}$ and $g:G \to  \{1, 2\}$ ($F,G\subseteq I$) be functions. Then by (1) we derive $(f\land g)^* = f^*\land_{ i} g^*$ as follows:
$$f^*=(f^{-1}(1), f^{-1}(2),\emptyset,\dots,\emptyset,\overline F,\emptyset,\dots,\emptyset);\quad
g^*=(g^{-1}(1), g^{-1}(2),\emptyset,\dots,\emptyset,\overline G,\emptyset,\dots,\emptyset)$$
\[
\begin{array}{lll}
 (f\land g)^* & =  & (g|_{G\cap F})^*  \\
  & =  & (F\cap g^{-1}(1), F\cap g^{-1}(2),\emptyset,\dots,\emptyset,\overline G\cup \overline F,\emptyset,\dots,\emptyset)  \\
  & =  &   (F\cap g^{-1}(1),F\cap g^{-1}(2),\emptyset\dots, \overline F\cup (F\cap \overline G),\emptyset,\dots,\emptyset)\\
  & =  &f^*\land_{ i} g^*.
\end{array}
\]
Similarly for the other operations.
%
%
%
%
%
%
 \end{proof}

\section{Skew star algebras}
The skew reducts of a $n$BA are so deeply related that they allow to recover the full structure of the $n$BA.
It is worthy to introduce a new variety of algebra, called {\em skew star} algebras, equationally axiomatising 
$n$ isomorphic skew BAs and their relationships. In the main result of this section we prove that the variety of 
skew star algebras is term equivalent to the variety of $n$BAs.

%

\begin{definition}
 An algebra $\mathbf B=(B,t_i,0_i)_{1\leq i\leq n}$, where $t_i$ is ternary and $0_i$ is a constant, is called
 a \emph{skew star algebra} if the following conditions hold, for every $j\neq i$:
 \begin{itemize}
\item[(N0)] $(B,t_i,0_i)$ is a SRCA,  for every $1\leq i\leq n$.
\item[(N1)] $t_i(0_j,y,z)= y$.   
\item[(N2)] $t_{ 1}(x, t_{ 2}(x, t_{ 3}(x,(\dots t_{n-1}(x,0_{ n},0_{n-1})\dots),0_3),0_{  2}),0_{  1})= x$.
\item[(N3)]  $t_{i}(x, t_{j}(x,y,z),u)= t_{j}(x, t_{i}(x,y,u),z)$.
\item[(N4)] $t_i(x,y,z)= t_1(x,t_2(x,t_3(x,\dots t_{i-1}(x,t_{i}(x,t_{i+1}(x,\dots,y),z),y)\dots,y),y),y)$.
\item[(N5)] $t_i(x,-,-)$ is a homomorphism of the algebra $(A,t_j,0_j)\times (A,t_j,0_j)$ into $(A,t_j,0_j)$: 
$$t_i(x,t_j(y^1,y^2,y^3),t_j(z^1,z^2,z^3)) = t_j(t_i(x,y^1,z^1),t_i(x,y^2,z^2),t_i(x,y^3,z^3)).$$
\end{itemize}
\end{definition}

Skew star algebras constitute a variety of algebras.

As usual, we define the operations
$x\land_i y=t_i(x,y,0_i);\quad x\ \bar\lor_i\ y=t_i(x,x,y);\quad y\setminus_i x=t_i(x,0_i,y)$.

\begin{proposition} Let $\mathbf B$ be a skew star algebra. Then the following conditions hold for $j\neq i$:
 \begin{itemize}
\item[(i)] $t_i(0_i,x,y)=y$ is equivalent to $$0_i\land_i y = 0_i = y \land_i 0_i;\quad 0_i\ \bar\lor_i\ y = y= y\ \bar\lor_i\ 0_i ;\quad y\setminus_i 0_i= y;\quad 0_i\setminus_i y= 0_i.$$
\item[(ii)] (N1) is equivalent to $0_j\land_i y = y;\quad 0_j\ \bar\lor_i\ y = 0_j;\quad y\setminus_i 0_j= 0_i$.
\item[(iii)] (N2) is equivalent to $x\land_1 (x\land_2(\dots (x\land_{n-1} 0_n)\dots)) = x$.
\item[(iv)] (N3) implies the following identities: 
$x\land_i (x\land_j y)= x\land_j (x\land_i y);\quad x\land_i(x\ \bar\lor_j\ y)= x\ \bar\lor_j\ y.$
\end{itemize}
\end{proposition}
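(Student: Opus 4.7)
The plan is to verify each of the four items by unfolding the derived operations
$$x\land_i y = t_i(x,y,0_i), \quad x\,\bar\lor_i\, y = t_i(x,x,y), \quad y\setminus_i x = t_i(x,0_i,y),$$
substituting special values, and invoking the relevant axiom among (N0)--(N5). A recurring background fact is the semicentrality of every element in the SRCA $(B, t_i, 0_i)$, which is guaranteed by (N0): for any $y \in B$ one has $t_i(y, y, 0_i) = y$ (semicentrality), and since $f_y := t_i(y, -, -)$ is a binary decomposition operator, $t_i(y, a, a) = a$ for every $a$, whence in particular $t_i(y, 0_i, 0_i) = 0_i$.

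For part (i), the forward direction proceeds by instantiation: substituting $y := 0_i$ into $t_i(0_i, x, y) = y$ gives $t_i(0_i, x, 0_i) = 0_i$, i.e.\ $0_i \land_i x = 0_i$; substituting $x := 0_i$ gives $t_i(0_i, 0_i, y) = y$, which reads simultaneously as $0_i\,\bar\lor_i\,y = y$ and $y \setminus_i 0_i = y$. The remaining three identities $y \land_i 0_i = 0_i$, $0_i \setminus_i y = 0_i$, and $y\,\bar\lor_i\,0_i = y$ do not involve $0_i$ as the first argument of $t_i$; they fall out immediately from the semicentral identities recalled above. The converse is trivial inside a skew star algebra, since $t_i(0_i, x, y) = y$ is already part of (N0). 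Part (ii) is entirely analogous: each listed identity is obtained by substituting $y := 0_i$ or $z := 0_i$ (or both) into (N1), and the converse is again available via (N0).

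Part (iii) is purely a matter of unfolding: expanding $x \land_k z = t_k(x, z, 0_k)$ from the innermost occurrence $x \land_{n-1} 0_n = t_{n-1}(x, 0_n, 0_{n-1})$ outward reproduces verbatim the nested term on the left-hand side of (N2), so the two identities literally coincide. For part (iv), the first identity $x \land_i (x \land_j y) = x \land_j (x \land_i y)$ unfolds to
$$t_i(x, t_j(x, y, 0_j), 0_i) = t_j(x, t_i(x, y, 0_i), 0_j),$$
which is precisely (N3) with $z := 0_j$ and $u := 0_i$. The second identity $x \land_i (x\,\bar\lor_j\,y) = x\,\bar\lor_j\,y$ unfolds to $t_i(x, t_j(x, x, y), 0_i) = t_j(x, x, y)$; applying (N3) with the substitution $y \mapsto x$, $z \mapsto y$, $u \mapsto 0_i$ rewrites the left-hand side as $t_j(x, t_i(x, x, 0_i), y)$, and the semicentral identity $t_i(x, x, 0_i) = x$ then collapses this to $t_j(x, x, y)$, as required.

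The main obstacle is bookkeeping rather than ingenuity: for each identity one must carefully track which slot of $t_i$ carries $0_i$, $0_j$, or a free variable, and then identify whether the result is an axiom instance, a semicentral identity, or a D1-style consequence of the binary decomposition operator $f_y$. Once this dictionary is set up, every step is a one-line computation.
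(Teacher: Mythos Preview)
Your argument is correct, and for parts (iii)--(iv) it matches the paper (which in fact writes only ``Trivial'' for (iv), so your explicit (N3)-plus-semicentrality computation is more thorough than the original). The difference lies in the converse directions of (i) and (ii). The paper opens by invoking the SRCA identity $t_i(x,y,z) = (x \land_i y)\ \bar\lor_i\ (z \setminus_i x)$ and then \emph{computes} the axiom from the listed identities, e.g.\ $t_i(0_j, y, z) = (0_j \land_i y)\ \bar\lor_i\ (z \setminus_i 0_j) = y\ \bar\lor_i\ 0_i = y$. You instead observe that the axiom already holds in a skew star algebra, making the converse vacuous. Both readings of ``equivalent'' are defensible under the stated hypothesis, but the paper's derivation carries more content: it shows that the binary-operation identities actually reconstruct the axiom from the skew-BA representation of $t_i$, rather than both merely happening to hold. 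Conversely, your forward direction is spelled out more carefully than the paper's, which omits it.

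One correction: in (ii) you say the converse is ``again available via (N0)'', but the axiom in question is (N1), which is not part of (N0). What (N0) supplies is the SRCA structure and hence the representation $t_i(x,y,z) = (x \land_i y)\ \bar\lor_i\ (z \setminus_i x)$; combined with the three listed identities (and $y\ \bar\lor_i\ 0_i = y$ from part (i)) this does yield (N1), which is presumably what you intended. Either write ``since (N1) holds in $\mathbf B$'' for the vacuous reading, or give this one-line computation as the paper does.
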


\begin{proof} Since $(B,t_i,0_i)$ is a SRCA, then by Theorem \ref{thm:equiv} we have 
$t_i(x,y,z)= (x\land_i y)\ \bar \lor_i\ (z\setminus_i x)$.

(i) ($\Leftarrow$) $t_i(0_i,y,z)= (0_i\land_i y)\ \bar \lor_i\ (z\setminus_i 0_i) =0_i\ \bar \lor_i\ z=z$.

(ii)  ($\Leftarrow$) $t_i(0_j,y,z)= (0_j\land_i y)\ \bar \lor_i\ (z\setminus_i 0_j)=y\ \bar\lor_i\
0_i=_{(i)}y$.

(iii) By definition of $\land_i$.

(iv) Trivial.
\end{proof}

Consider the following correspondence between the algebraic similarity types of $n$BAs and of skew star algebras.
\begin{itemize}
\item Beginning on the $n$BA side: $t_i(x,y,z):=q(x,y/\bar i,z/i)$ and $0_i:=\e_i$. 
\item Beginning on the skew star algebra side: 
$$q_t(x,y_1,\dots,y_n):= t_1(x,t_2(x,t_3(x,t_4(\dots t_{n-1}(x,y_n,y_{n-1})\dots),y_3),y_2),y_1);\quad \e_i:=0_i.$$
\end{itemize}
If $\mathbf B$ is a skew star algebra, then $\mathbf B^\bullet = (B; q_t, \e_1,\dots,\e_n)$ denotes the corresponding algebra in the similarity type of $n$BAs. Similarly, if $\mathbf A$ is a $n$BA, then $\mathbf A^* = (A;t_1,\dots,t_n,0_1,\dots,0_n)$ denotes the corresponding algebra in the similarity type of skew star algebras.

It is not difficult to prove the following theorem.

\begin{theorem}
  The above correspondences define a term equivalence between the varieties of $n\mathrm{BA}$s and of skew star algebras. More precisely,
  \begin{itemize}
\item[(i)] If $\mathbf A$ is a $n\mathrm{BA}$, then $\mathbf A^*$ is a skew star algebra;
\item[(ii)] If $\mathbf B$ is a skew star algebra, then $\mathbf B^\bullet$ is a $n\mathrm{BA}$;
\item[(iii)] $(\mathbf A^*)^\bullet = \mathbf A$;
\item[(iv)] $(\mathbf B^\bullet)^* = \mathbf B$.
\end{itemize}
\end{theorem}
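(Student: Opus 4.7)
The plan is to verify the four clauses in sequence, with clauses (i) and (ii) doing the conceptual work (checking each axiom on each side is preserved) and clauses (iii) and (iv) reducing to the key identities Lemma \ref{sigma}(2) and (N4) respectively. I would state the four clauses as separate blocks but prove them in the order (i), (iii), (iv), (ii), since the round-trip identities help motivate why (ii) must hold.

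For (i), I would verify each skew-star axiom on $\mathbf A^*$ by direct translation. Axiom (N0) is exactly Proposition \ref{red}. Axiom (N1) unfolds to $q(\e_j, y/\bar i, z/i) = y$, which is (B0) since $j \in \bar i$. Axiom (N3) is Lemma \ref{sigma}(1). Axiom (N5) is an instance of (B3) applied to the operation $t_j$: since $q$ is a homomorphism in $\mathbf A$, so is $t_j$, and $t_i(x,-,-)$ preserves it. Axioms (N2) and (N4) both follow from Lemma \ref{sigma}(2): for (N2), specialize to $y_i = \e_i$ and use (B4); for (N4), read Lemma \ref{sigma}(2) with $\tau = \mathrm{id}$, $\sigma = \mathrm{id}$, and $y_k = y$ for $k \neq i$, $y_i = z$, together with $t_n(x, y_n, y_n) = y_n$ (which is (B1) via $q(x, y_n, \dots, y_n) = y_n$).

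Clauses (iii) and (iv) are then immediate: for (iii), $q_t^{\mathbf A^*}(x, y_1, \dots, y_n)$ is literally the right-hand side of Lemma \ref{sigma}(2) at $\tau = \sigma = \mathrm{id}$ after collapsing $t_n(x, y_n, y_n) = y_n$, hence equals $q^{\mathbf A}(x, y_1, \dots, y_n)$; for (iv), $t_i^{(\mathbf B^\bullet)^*}(x,y,z) = q_t^{\mathbf B^\bullet}(x, y/\bar i, z/i)$ is precisely the expansion on the right of (N4), hence equals $t_i^{\mathbf B}(x,y,z)$.

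For (ii), I must check (B0)--(B4) for $\mathbf B^\bullet$. (B4) is exactly (N2). (B0) follows by repeatedly applying (N1): since $0_i$ is the ``neutral'' argument for each $t_j$ with $j \neq i$, each layer collapses, leaving $x^i$. (B1) uses that in any SRCA every element is semicentral, so the associated binary operator $f_y(u,v) = t_k(y, u, v)$ is a decomposition operator and hence satisfies $f_y(x,x) = x$; applying this at each layer collapses $q_t(y, x, \dots, x)$ to $x$. (B3) follows from iterated application of (N5), which says precisely that each $t_i(x,-,-)$ is a homomorphism with respect to every $t_j$; unfolding $q_t$ as a nested composition of $t_k$'s, each layer commutes past the outer $q_t$ by (N5). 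The main obstacle is (B2), which is the internal idempotency of $q$. For this I would combine the SRCA fact that each $t_k(y,-,-)$ is a binary decomposition operator (so satisfies the binary analogue $t_k(y, t_k(y,a,b), t_k(y,c,d)) = t_k(y,a,d)$) with (N3) (which lets one swap adjacent $t_i, t_j$ through the nested expression) and (N5) (which lets one push $t_i(y,-,-)$ through a $t_j$). Concretely, I would show by induction on the nesting depth that $q_t(y, q_t(y, \bar x^1), \dots, q_t(y, \bar x^n)) = q_t(y, x_{11}, x_{22}, \dots, x_{nn})$, at each stage peeling off the outermost $t_1$, using (N5) to distribute it inside the $k$-th argument $q_t(y, \bar x^k)$, and using (N3) and the binary idempotency to pick out the diagonal coordinate. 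This bookkeeping is routine but technical and is where almost all the work of (ii) sits.
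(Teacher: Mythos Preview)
Your proof is correct and follows the same overall architecture as the paper; the treatment of (i), (iii), and (iv) is essentially identical (the paper derives (N4) from Lemma \ref{sigma}(1) together with (B2) rather than from Lemma \ref{sigma}(2), but this is a minor variant). The one substantive difference is in (ii): the paper handles (B1)--(B3) in a single stroke by observing that, thanks to (N0) and (N5), each $t_i(x,-,-)$ is a binary decomposition operator on the full skew star algebra $\mathbf B$, and then invoking the general fact (cited from \cite{MMT87}) that decomposition operators are closed under composition, so that $q_t(x,-,\dots,-)$ is automatically an $n$-ary decomposition operator on $\mathbf B^\bullet$. Your direct axiom-by-axiom verification, with the inductive bookkeeping for (B2) using (N3), (N5), and the binary D2, is correct but reproduces by hand what the paper outsources to \cite{MMT87}; the paper's route is shorter, yours is more self-contained.
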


\begin{proof} (i) (N0) and (N1) derive from Proposition \ref{red} and Lemma \ref{lem:dec0}.
 (N2) follows from Lemma \ref{sigma}(3), by putting $\tau$ and $\sigma$ to be the identical permutation. 
(N3) is a consequence of Lemma \ref{sigma}(1).
For (N4) 
 we apply Lemma \ref{sigma}(1) to
$t_1(x,t_2(x,t_3(x,\dots t_{i-1}(x,t_{i}(x,t_{i+1}(x,\dots,y),z),y)\dots,y),y),y)$. We get the conclusion:
$$t_i(x,t_2(x,t_3(x,\dots t_{i-1}(x,t_{i+1}(x,\dots,y),y)\dots,y),y),z)=_{B2}t_i(x,y,z).$$
(N5) follows from (B3). 
 
(ii) (B0) derives from (N0) and (N1). 
By (N0) and (N5), $t_i(x,-,-)$ ($1\leq i\leq n$) is a decomposition operator on $\mathbf B$.
Then, for every $x\in B$, $q_t(x,-,\dots,-)$ is a $n$-ary decomposition operator on $\mathbf B^\bullet$ (i.e., (B1)-(B3) hold), because decomposition operators are closed under composition (see \cite{MMT87}). 
(B4) is a consequence of (N2).

(iii) Let $\mathbf A$ be a $n$BA. Since $t_i(x,y,z)=q(x,y/\bar i,z/i)$, then we have $q_t=q$ by  Lemma \ref{sigma}(2) with $\sigma$ and $\tau$ the identical permutation. 

(iv) Let $\mathbf B$ be a skew star algebra. By (N4) we have  that $t_i(x,y,z)=q_t(x,y/\bar i,z/i)$.
\end{proof}

\section{Multideals}
The notion of ideal plays an important role
in order theory and universal algebra. Ideals, filters
and congruences are interdefinable in Boolean algebras: 
$x\in I$ if and only if $\neg x\in \neg I$ if and only if
$x \theta_I 0$  if and only if
$\neg x \theta_I 1$, for every ideal $I$.
In the case of $n$BAs, the couple $(I,\neg I)$ is replaced by a $n$-tuple
$(I_1,\ldots,I_n)$ satisfying some compatibility conditions that extend in a conservative way those of the Boolean case.

\begin{definition}\label{def:fide}
Let $\mathbf A$ be a $n$BA. A \emph{multideal} is a $n$-partition $(I_1,\dots, I_n)$ of a subset $I$ of $A$ such that
\begin{enumerate}
\item[(m1)] $\e_k\in I_k$;
\item[(m2)] $a\in I_r$, $b\in I_k$ and $y_1,\dots,y_n\in A$ imply $q(a,y_1,\dots,y_{r-1},b,y_{r+1},\dots,y_n)\in I_k$;
\item[(m3)]  $a\in A$ and $y_1,\dots,y_n\in I_k$ imply $q(a,y_1,\dots, y_n)\in I_k$.
\end{enumerate}
The set $I$ is called the {\em carrier} of the multideal.
A \emph{ultramultideal} of $\mathbf A$ is a multideal whose carrier is $A$.
\end{definition}


The following Lemma, whose proof is straightforward, shows the appropriateness of the notion of multideal. In Section \ref{sec:multvscong} we show that there exists a bijective correspondence between multideals and congruences. 

\begin{lemma}\label{lem:congr}
If $\theta$ is  a proper congruence on a $n\mathrm{BA}$ $\mathbf A$, then $I(\theta)=(\e_1/\theta,\ldots,\e_n/\theta)$ is a
multideal of $\mathbf A$.
\end{lemma}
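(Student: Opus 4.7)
The plan is to verify the four constituent conditions defining a multideal of Definition \ref{def:fide} in sequence: first establish that the family $(\e_1/\theta,\dots,\e_n/\theta)$ is an $n$-partition of its union (that is, the classes are pairwise disjoint), then check (m1), (m2), (m3). The only place where the hypothesis that $\theta$ is \emph{proper} actually enters is in the disjointness claim; once this is in place, the three closure conditions will follow by a routine application of the congruence property to the $n$BA axioms (B0) and (B1).

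The key step I would handle first is pairwise disjointness of the classes $\e_k/\theta$. I would argue by contradiction: suppose $\e_i\,\theta\,\e_j$ for some $i\neq j$. Given arbitrary $a,b\in A$, choose any tuple $y_1,\dots,y_n\in A$ with $y_i=a$ and $y_j=b$; by the congruence property together with axiom (B0),
$$a \;=\; q(\e_i,y_1,\dots,y_n)\;\theta\;q(\e_j,y_1,\dots,y_n) \;=\; b.$$
Since $a,b\in A$ were arbitrary, this forces $\theta=\nabla$, contradicting properness. Consequently $\e_i/\theta$ and $\e_j/\theta$ are disjoint whenever $i\neq j$, so $(\e_1/\theta,\dots,\e_n/\theta)$ is an $n$-partition of its union~$I$.

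The remaining points are immediate. Condition (m1) holds by reflexivity, since $\e_k\in\e_k/\theta$. For (m2), if $a\,\theta\,\e_r$ and $b\,\theta\,\e_k$, then by the congruence property and (B0),
$$q(a,y_1,\dots,y_{r-1},b,y_{r+1},\dots,y_n)\;\theta\;q(\e_r,y_1,\dots,y_{r-1},b,y_{r+1},\dots,y_n)\;=\;b\;\theta\;\e_k,$$
so this element lies in $\e_k/\theta=I_k$. For (m3), assume $y_1,\dots,y_n\in\e_k/\theta$ and $a\in A$ is arbitrary; then by the congruence property together with (B1),
$$q(a,y_1,\dots,y_n)\;\theta\;q(a,\e_k,\dots,\e_k)\;=\;\e_k,$$
whence $q(a,y_1,\dots,y_n)\in\e_k/\theta=I_k$.

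I do not foresee any real obstacle beyond the disjointness argument: the whole proof is essentially an unwinding of the defining axioms of $n$BAs. The underlying conceptual content is that the $n$ constants $\e_1,\dots,\e_n$ must have pairwise distinct images in any nontrivial quotient of a $n$BA, which reflects the fact (recorded in Theorem \ref{lem:subirr}) that $\mathbf n$ is the unique subdirectly irreducible $n$BA and that $\e_1,\dots,\e_n$ are distinct in $\mathbf n$.
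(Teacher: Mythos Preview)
Your proof is correct and is exactly the straightforward verification the paper has in mind; the paper itself omits the argument entirely, simply declaring the lemma ``straightforward''. Your handling of disjointness via (B0) and of (m2)--(m3) via (B0) and (B1) is the natural way to unpack this.
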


Multideals extend to the $n$-ary case the fundamental notions of (boolean) ideal and filter, as shown in the following Proposition.

Recall that a   $2$BA $\mathbf A = (A,q,e_1,e_2)$ is term equivalent to the Boolean
algebra $\mathbf {A^*}=(A,\wedge, \vee,\neg,0,1)$ where $0=e_2$, $1=e_1$,
$x\wedge y= q(x,y,0)$, $x\vee y= q(x,1,y)$, $\neg x=q(x,0,1)$ \cite{first}.

\begin{proposition}\label{prop:cons_ext}
 Let $\mathbf A$ be a  $2\mathrm{BA}$, and $I_1,I_2\subseteq A$. Then $(I_1, I_2)$ is a multideal of $\mathbf A$
  if and only if $I_2$ is an ideal of  $\mathbf {A^*}$, and
  $I_1=\neg I_2$ is the   filter associated to $I_2$ in  $\mathbf {A^*}$.
\end{proposition}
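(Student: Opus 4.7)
The plan is to unfold each clause of the multideal definition through the Boolean term equivalence \(0=\e_2\), \(1=\e_1\), \(x\wedge y=q(x,y,\e_2)\), \(x\vee y=q(x,\e_1,y)\), \(\neg x=q(x,\e_2,\e_1)\), and check each ideal/filter property against its translation.

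For the forward direction, suppose \((I_1,I_2)\) is a multideal. That \(I_2\) is a Boolean ideal is almost a direct reading of the axioms: (m1) yields \(\e_2=0\in I_2\); closure under \(\vee\) comes from applying (m2) to \(x\vee y=q(x,\e_1,y)\) with \(a=x\in I_2\) (so \(r=2\)) and \(b=y\in I_2\) substituted at position \(2\); and downward closure comes from applying (m3) to \(y\wedge x=q(y,x,\e_2)\), since both \(x\) and \(\e_2\) lie in \(I_2\). The identity \(I_1=\neg I_2\) then follows from two symmetric applications of (m2) to \(\neg a=q(a,\e_2,\e_1)\): with \(a\in I_1\) (so \(r=1\)) and \(b=\e_2\) placed at position \(1\) we get \(\neg a\in I_2\), giving \(I_1\subseteq\neg I_2\); and with \(a\in I_2\) (so \(r=2\)) and \(b=\e_1\) placed at position \(2\) we get \(\neg a\in I_1\), giving \(\neg I_2\subseteq I_1\).

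For the converse, I would assume \(I_2\) is a proper Boolean ideal and set \(I_1=\neg I_2\); properness yields disjointness \(I_1\cap I_2=\emptyset\) (else \(1=x\vee\neg x\in I_2\)), and (m1) is immediate. For (m3) with \(k=2\), the identity \(q(a,y_1,y_2)=(a\wedge y_1)\vee(\neg a\wedge y_2)\) places \(q(a,y_1,y_2)\) in \(I_2\) whenever \(y_1,y_2\in I_2\), using downward closure and closure under \(\vee\). For (m3) with \(k=1\), I would use the Boolean identity \(\neg q(a,y_1,y_2)=q(a,\neg y_1,\neg y_2)\), reducing to the previous case since \(\neg y_1,\neg y_2\in I_2\). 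The four subcases of (m2), indexed by \((r,k)\in\{1,2\}^2\), are handled analogously by unfolding \(q\) into a join of two meets and observing that each summand is dominated by an element of \(I_2\) (directly if \(k=2\), or after negation if \(k=1\)).

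The only mildly technical point is the auxiliary identity \(\neg q(a,y_1,y_2)=q(a,\neg y_1,\neg y_2)\), but this is immediate either by direct Boolean calculation or by primality of \(\mathbf{2}\); once in hand, all remaining steps reduce to routine bookkeeping with the Boolean ideal axioms.
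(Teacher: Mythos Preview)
Your proof is correct and follows essentially the same approach as the paper: both unfold the multideal axioms through the Boolean term equivalence $q(x,y,z)=(x\wedge y)\vee(\neg x\wedge z)$ and verify each ideal/filter condition against (m1)--(m3). The only minor differences are cosmetic: the paper obtains $I_1=\neg I_2$ by forward reference to Lemma~\ref{lem:idperm} (which for $n=2$ reduces exactly to your direct computation via (m2)), and in the converse it handles the case $k=1$ of (m3) by upward closure in the filter $I_1$ rather than via your identity $\neg q(a,y_1,y_2)=q(a,\neg y_1,\neg y_2)$. Your explicit use of properness to secure disjointness of $I_1$ and $I_2$ is a point the paper's proof leaves tacit.
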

\begin{proof}
   If  $(I_1, I_2)$ is a multideal, then $0=e_2\in I_2$.
  Moreover if $x,y\in I_2$ then
  $x\vee y=q(x,1,y)\in I_2$  by (m2),
  and if  $x\in I_2$  and $y\in A$, then 
  $y\wedge x= q(y,x,0)\in I_2$ by (m3).
  The fact that $I_1=\{\neg x\ |\ x\in I_2\}=\{q(x,0,1)\ | x\in I_2\}$ follows from Lemma \ref{lem:idperm} below.
  Conversely, if $I_2$ is a Boolean ideal of $\mathbf {A^*}$ and $I_1=\neg I_2$,
  then the condition (m1) is clearly satisfied. Concerning (m2),
it is worth noticing that $q(x,z,y)=(x\wedge z)\vee(\neg x\wedge y)$. Then
if $x\in I_2, y\in I_1,z \in A$ (for instance, the other 3 cases being
similar to this one), we have that $\neg x\in I_1$, so that $\neg x\wedge y\in I_1$ and we conclude
that $(x\wedge z)\vee(\neg x\wedge y)=q(x,z,y)\in I_1$.
Concerning (m3), if $x,y\in I_2$ and $z\in A$ then $z \wedge x,\neg z\wedge y\in I_2$, hence  $(z \wedge x)\vee (\neg z\wedge y)=q(z,x,y)\in I_2$.
If $x,y\in I_1$ and $z\in A$, then $(z\wedge x)\vee(\neg z \wedge y)\geq
 (z\wedge (x\wedge y))\vee(\neg z \wedge (x\wedge y))=(z\ \vee \neg z)\wedge (x\wedge y)=x\wedge y\in I_1$, so that $(z\wedge x)\vee(\neg z \wedge y)=q(z,x,y)\in I_1$.
\end{proof}

In the $n$-ary case, multideals  
of $\mathbf A$
may be characterised as $n$-tuples of skew ideals in the  skew star algebra
associated to  $\mathbf A$, satisfying the conditions expressed in the
following Proposition.

\begin{proposition}\label{prop:caract_multideals}
  Let $\mathbf A$ be a $n\mathrm{BA}$, 
  $(I_1,\dots, I_n)$ be a  $n$-partition of a subset $I$ of $A$, and 
  $\mathbf A^* = (A;t_1,\dots,t_n,0_1,\dots,0_n)$ the skew star algebra 
  corresponding to $\mathbf A$. Then  $(I_1,\dots, I_n)$ is a multideal if and only if
  the following conditions are satisfied:

  \begin{enumerate}
\item[(I1)] $0_r\in I_r$;
\item[(I2)] $a\in I_r$, $b\in I_k$ and $y\in A$ imply
$t_r(a,y,b)\in I_k$.
\item[(I3)] $a,b\in I_r$  and $y\in A$  imply $t_k(y,a,b)\in I_r$, for all $k$.
\end{enumerate}
\end{proposition}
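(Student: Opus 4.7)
The proof will be a straightforward translation between the $q$-form of the conditions (m1)--(m3) and the $t_i$-form of (I1)--(I3). Since $0_r=\e_r$ by the term correspondence, (m1) and (I1) literally coincide, so the only real work is the equivalences (m2) $\Leftrightarrow$ (I2) and (m3) $\Leftrightarrow$ (I3). The essential bridge is the unfolding formula for $q$ in Lemma \ref{sigma}(2): taking $\gs$ to be the identity permutation and collapsing the innermost term $t_{\tau n}(x,y_{\tau n},y_{\tau n})=y_{\tau n}$ via (B1), one obtains, for any $\tau\in S_n$,
$$q(x,y_1,\dots,y_n)=t_{\tau 1}\bigl(x,t_{\tau 2}\bigl(x,\dots t_{\tau(n-1)}(x,y_{\tau n},y_{\tau(n-1)})\dots\bigr),y_{\tau 1}\bigr).$$

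For (m2) $\Leftrightarrow$ (I2): the forward direction is immediate, since $t_r(a,y,b)=q(a,y/\bar r,b/r)$ is an instance of the expression appearing in (m2) (take $y_j=y$ for $j\neq r$ and $y_r=b$). For the converse, given arbitrary $y_1,\dots,y_n\in A$, I choose a permutation $\tau$ with $\tau 1=r$; the displayed formula above then expresses $q(a,y_1,\dots,y_{r-1},b,y_{r+1},\dots,y_n)$ in the form $t_r(a,z,b)$ for a single element $z\in A$ (namely the remaining nested $t$-expression). Invoking (I2) with $a\in I_r$, $b\in I_k$ places this in $I_k$, yielding (m2).

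For (m3) $\Leftrightarrow$ (I3): the forward direction is again a specialisation, since $t_k(y,a,b)=q(y,a/\bar k,b/k)$ has all of its $q$-arguments equal to either $a$ or $b$; if both lie in $I_r$ then (m3) (applied with $I_r$ in place of $I_k$) gives $t_k(y,a,b)\in I_r$. For the converse, fix $a\in A$ and $y_1,\dots,y_n\in I_k$, pick any $\tau$ (e.g.\ the identity), and peel the nested expression from the inside out: the innermost term $t_{\tau(n-1)}(a,y_{\tau n},y_{\tau(n-1)})$ lies in $I_k$ by (I3) since $y_{\tau n},y_{\tau(n-1)}\in I_k$; each successive wrapping $t_{\tau j}(a,\cdot,y_{\tau j})$ combines a value already in $I_k$ with $y_{\tau j}\in I_k$, and (I3) again keeps the result in $I_k$. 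After $n-1$ applications one obtains $q(a,y_1,\dots,y_n)\in I_k$, proving (m3).

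The only point requiring mild care is the bookkeeping of the indexing in the nested formula and the initial use of (B1) to dispose of the innermost $t_{\tau n}$; once Lemma \ref{sigma}(2) is available, both implications reduce to a single inductive unfolding of $q$ together with one application of (I2) respectively $n-1$ applications of (I3), so I expect no genuine obstacle beyond this routine verification.
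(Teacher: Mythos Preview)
Your proposal is correct and follows essentially the same approach as the paper's own proof. Both arguments handle the forward direction as a direct specialisation and, for the converse, unfold $q$ via Lemma~\ref{sigma}(2) with a permutation $\tau$ placing $r$ first to reduce (m2) to a single application of (I2), and then peel the nested $t$-expression from the inside out to derive (m3) from repeated use of (I3); your explicit mention of the (B1) collapse of the innermost $t_{\tau n}(x,y_{\tau n},y_{\tau n})$ and the count of $n-1$ applications are in fact slightly more precise than the paper's presentation.
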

\begin{proof}
  Showing that a multideal satisfies I1, I2 and I3 is straightforward.
  A $n$-partition satisfying  I1, I2 and I3, trivially  verifies (m1).
  Concerning (m2), let us suppose that $a\in I_r$, $b\in I_k$ 
  and $y_1,\ldots,y_n\in A$. In order to show that
  $q(a,y_1,\dots,y_{r-1},b,y_{r+1},\dots,y_n) \in I_k$, we apply
Lemma \ref{sigma}(4):

$q(x, y_{\gs 1},\dots, y_{\gs n})=t_{\tau 1}(x, t_{\tau 2}(x, t_{\tau 3}(x,(\dots t_{\tau n}(x,y_{\tau n},y_{\gs \tau n})),y_{\gs \tau 3})),y_{\gs \tau 2}),y_{\gs \tau 1})$

in the case $\sigma=id, \tau=(1r)$, and we get
 \begin{align*}
 q(a,y_1,\dots,y_{r-1},b,y_{r+1},\dots,y_n)  & = & \\
  t_r(a,t_2(a,\ldots,y_2),b)\in I_k, \text{ by I2.}
  \end{align*}

  Concerning (m3), let $a_1,\ldots,a_n\in I_k$ and $y\in A$. We have

 $q(y,a_1,\dots,a_n)  =
 t_1(y,t_2(y,t_3(\ldots t_{n-2}(y,t_{n-1}(y,a_n,a_{n-1}),a_{n-2})\ldots,a_3),a_2),a_1)$

 By applying I3 $n$ times, we conclude that  $q(y,a_1,\dots,a_n) \in I_k$, since
 $t_{n-1}(y,a_n,a_{n-1})\in I_k$, hence  $t_{n-2}(y,t_{n-1}(y,a_n,a_{n-1}),a_{n-2})\in I_k$, and so on.
  \end{proof}

By using the characterisation of Proposition \ref{prop:caract_multideals} it is easy to see
that the components of a multideals are skew ideals in the skew Boolean algebra corresponding to their index.

Recall from Section \ref{skewskew} that $x \preceq^i y$ iff $x\land_iy\land_i x=x$.
\begin{corollary}
  If $(I_1,\dots, I_n)$ is a multideal of a $n\mathrm{BA}$ $\mathbf A$ 
  and $1\leq i\leq n$, then $I_i$ is  a $\preceq^i$-ideal of the skew $i$-reduct
   $S_i(\mathbf A)=(A, \land_i, \overline\vee_i, \setminus_i, 0_i)$.
\end{corollary}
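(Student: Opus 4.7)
The plan is to reduce the corollary to conditions (I1)--(I3) of Proposition \ref{prop:caract_multideals} via the equivalent formulations of $\preceq$-ideal recalled in Section \ref{skewskew}. Specifically, it suffices to verify, for $I_i$ inside the skew $i$-reduct $S_i(\mathbf A)$: (a) nonemptiness, (b) closure under $\overline\vee_i$, (c) left absorption $x\land_i y\in I_i$ for $x\in A$, $y\in I_i$, and (d) right absorption $y\land_i x\in I_i$ in the same setting.

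For (a), $\e_i=0_i\in I_i$ by (I1). For (b), given $a,b\in I_i$, we have $a\,\overline\vee_i\,b = t_i(a,a,b)$; applying (I2) with $r=k=i$, reading the middle argument as an arbitrary element of $A$, we conclude $t_i(a,a,b)\in I_i$.

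For (c), note that $x\land_i y = t_i(x,y,0_i)$. Since $y,0_i\in I_i$, condition (I3) with $r=i$ (and the universal quantifier on $k$ specialised to $k=i$, and first argument $x\in A$) yields $t_i(x,y,0_i)\in I_i$. For (d), $y\land_i x = t_i(y,x,0_i)$; here $y\in I_i$ (so $r=i$), the middle argument $x$ is arbitrary in $A$, and $0_i\in I_i$ (so $k=i$), so (I2) gives $t_i(y,x,0_i)\in I_i$.

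There is no substantive obstacle here: the corollary is essentially the observation that the multideal axioms (I1)--(I3), once the translation $x\land_i y = t_i(x,y,0_i)$ and $x\,\overline\vee_i\,y = t_i(x,x,y)$ is unfolded, immediately specialise to the $\preceq^i$-ideal axioms for the $i$-th component.
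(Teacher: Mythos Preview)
Your proof is correct and follows essentially the same route as the paper: unfold the skew operations in terms of $t_i$ and invoke the conditions (I1)--(I3) of Proposition~\ref{prop:caract_multideals}. The only difference is that the paper exploits right-handedness of $S_i(\mathbf A)$ to reduce the $\preceq^i$-ideal test to nonemptiness, closure under $\overline\vee_i$, and the single absorption $a\land_i x\in I_i$ for $a\in I_i$, so that (I1) and (I2) alone suffice and your appeal to (I3) for the left absorption $x\land_i y$ becomes unnecessary.
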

\begin{proof}
Since  $S_i(\mathbf A)$ is right-handed, a non empty set $K\subseteq A$ is an ideal of $S_i(\mathbf A)$
  if and only if, for all $a,b\in K$ and $x\in A$, $a \overline\vee_i b\in K$ and
  $a \land_i x\in K$ (see Section \ref{skewskew}). Given $a,b\in I_i$ and  $x\in A$, we have 
  $a \overline\vee_i b = t_i(a,a,b)\in I_i$ and  $a \land_i x = t_i(a,x, 0_i)\in I_i$,
  by using in both cases the condition (I2) of Proposition \ref{prop:caract_multideals}
  (notice that $0_i\in I_i$, by (I1)).
 \end{proof}

\begin{lemma} The carrier $I$ of a multideal $(I_1,\dots, I_n)$ of a $n\mathrm{BA}$ $\mathbf A$  is a subalgebra of  $\mathbf A$.
\end{lemma}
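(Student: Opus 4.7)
The plan is to verify the two closure conditions required for $I = I_1 \cup \cdots \cup I_n$ to be a subalgebra of $\mathbf A$, namely containment of all constants and closure under the single operator $q$. Note that since $(I_1,\dots,I_n)$ is an $n$-partition of $I$, the sets $I_k$ are pairwise disjoint and their union is exactly $I$.

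First I would handle the constants: by condition (m1) of Definition \ref{def:fide}, $\e_k \in I_k \subseteq I$ for every $k \in \hat n$, so all designated constants of the pure $n$BA lie in $I$.

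Next, for closure under $q$, take arbitrary $a, y_1, \dots, y_n \in I$. Since $(I_1,\dots,I_n)$ partitions $I$, there exist unique indices $r$ and $s$ such that $a \in I_r$ and $y_r \in I_s$. The elements $y_1,\dots,y_{r-1},y_{r+1},\dots,y_n$ all belong to $A$ (since $I \subseteq A$), so condition (m2) applies with $b := y_r \in I_s$ placed at position $r$, yielding
\[
q(a, y_1, \dots, y_{r-1}, y_r, y_{r+1}, \dots, y_n) \in I_s \subseteq I.
\]
This is exactly $q(a, y_1, \dots, y_n)$, so $I$ is closed under $q$.

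There is no real obstacle here: the key observation is simply that (m2) is tailored to absorb the argument occupying the position indexed by the class of $a$, and that argument is itself in some $I_s$ by the partition hypothesis. Condition (m3) is not needed for this lemma. Hence $I$ is a subalgebra of $\mathbf A$.
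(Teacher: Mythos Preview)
Your proof is correct and follows essentially the same approach as the paper: both arguments use (m1) for the constants and apply (m2) with $b := y_r$ at position $r$ to obtain closure under $q$. Your write-up is in fact a bit more explicit than the paper's, and your remark that (m3) is not needed is accurate.
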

 \begin{proof}
The constants $\e_1,\ldots,\e_n$ belong to $I$ by (m1).

\noindent If $a\in I_r$ and $b\in I_k$, then $q(a,y_1,\dots,y_{r-1},b,y_{r+1},\dots,y_n)\in I_k$, for all
 $y_1,\dots,y_n\in A$, by (m2). Hence $I$ is a subalgebra of $\mathbf A$.
\end{proof}

Any component $I_i$ of a multideal $(I_1,\dots, I_n)$ determines the multideal completely, as
shown in the following Lemma.

\begin{lemma}\label{lem:idperm} If $(I_1,\dots, I_n)$ is a multideal of a $n\mathrm{BA}$ $\mathbf A$, then $I_k = I_r^{(rk)}$  for all $r,k$.
\end{lemma}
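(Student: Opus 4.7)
The plan is to establish the two inclusions $I_r^{(rk)}\subseteq I_k$ and $I_k\subseteq I_r^{(rk)}$ separately, using only (m1)--(m2) and the involutive character of the transposition $(rk)$.

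First I would unfold the definition of $x^{(rk)}$. Since $(rk)i = i$ for $i\notin\{r,k\}$, $(rk)r = k$ and $(rk)k = r$, we have
$$x^{(rk)} \;=\; q(x,\e_1,\dots,\e_{r-1},\e_k,\e_{r+1},\dots,\e_{k-1},\e_r,\e_{k+1},\dots,\e_n),$$
that is, $x^{(rk)}$ is $q$ applied to $x$ with all constants $\e_i$ in their natural positions, except that $\e_k$ sits at position $r$ and $\e_r$ sits at position $k$. For the forward inclusion, take $x\in I_r$. Because $\e_k\in I_k$ by (m1) and $\e_k$ occupies position $r$ in the displayed expression, axiom (m2) applied with $a=x\in I_r$ and $b=\e_k\in I_k$ yields $x^{(rk)}\in I_k$. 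Hence $I_r^{(rk)}\subseteq I_k$.

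For the reverse inclusion, I would exploit that $(rk)$ is an involution. By Lemma~\ref{sigma}(6) and (B4),
$$(y^{(rk)})^{(rk)} \;=\; y^{(rk)\circ(rk)} \;=\; y^{\mathrm{id}} \;=\; q(y,\e_1,\dots,\e_n) \;=\; y,$$
for every $y\in A$. Now given $y\in I_k$, the forward inclusion applied with $r$ and $k$ swapped (which is perfectly symmetric, since (m1)--(m2) treat all indices on the same footing) yields $y^{(rk)}\in I_r$. Therefore $y=(y^{(rk)})^{(rk)}\in I_r^{(rk)}$, completing the proof.

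I do not expect any serious obstacle: the argument is essentially a one-line application of (m2) in each direction, glued together by the involutive identity for transpositions already established in Lemma~\ref{sigma}. The only point that requires a moment's care is matching the position of $\e_k$ in $x^{(rk)}$ with the distinguished position $r$ singled out by (m2).
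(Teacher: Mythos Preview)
Your proof is correct and follows essentially the same strategy as the paper: establish $I_r^{(rk)}\subseteq I_k$ via the multideal closure conditions, then obtain the reverse inclusion from the involutive identity $(y^{(rk)})^{(rk)}=y$. The only cosmetic difference is that the paper rewrites $a^{(rk)}$ as $t_r(a,t_k(a,a,\e_r),\e_k)$ via Lemma~\ref{sigma}(5) and invokes the $t$-based condition (I2) of Proposition~\ref{prop:caract_multideals}, whereas you work directly with the $q$-expression and (m2); your route is marginally more elementary since it bypasses the skew-star reformulation altogether.
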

\begin{proof} Let $a\in I_r$. Then $a^{(rk)} = t_r(a,t_k(a,a,\e_r),\e_k)\in I_k$ by
  Lemma \ref{sigma}(5) and
  Proposition \ref{prop:caract_multideals}(I2). Then we have
$$I_r^{(rk)}\subseteq I_k;\qquad I_k^{(rk)}\subseteq I_r.$$
The conclusion follows because $(a^{(rk)})^{(rk)} = a$, by Lemma \ref{sigma}(6) and (B4).
\end{proof}

Multideals are closed under arbitrary nonempty componentwise intersection.
The minimum multideal is the sequence $(\{\e_k\})_{k\in \hat n}$.
Given a  $n$BA $\mathbf A$, and $A_1,\dots,A_n\subseteq A$,
let us consider the set $\mathcal A$ 
of multideals containing $(A_1,\dots,A_n)$.
The {\em ideal closure} of $(A_1,\dots,A_n)$ is the componentwise intersection
of the elements of $\mathcal A$, if ${\mathcal A}\not=\emptyset$. Otherwise,
the ideal closure of $(A_1,\dots,A_n)$
is the constant $n$-tuple $I^\top=(A,\ldots,A)$, that we consider as a degenerate multideal,
by a small abuse of terminology.

As a matter of fact, $I^\top$ is the only degenerate multideal.

\begin{lemma}\label{lem:top}
  Let $\mathbf A$ be a $n\mathrm{BA}$ and  $I=(I_1,\dots,I_n)$ be a tuple of subsets of $A$ satisfying the closure properties
  of Definition \ref{def:fide}. The following are equivalent:
\begin{itemize}
\item[(i)] there exist $x\in A$, $ r\neq k$ such that
$x\in I_r\cap I_k$.
\item [(ii)] there exist $r\neq k$  such that $\e_k\in I_r$.
\item [(iii)] $I=I^\top$.
\end{itemize}
\end{lemma}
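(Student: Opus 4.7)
The plan is to establish the chain (iii)$\,\Rightarrow\,$(i)$\,\Rightarrow\,$(ii)$\,\Rightarrow\,$(iii). The implication (iii)$\,\Rightarrow\,$(i) is immediate, since if $I = I^\top$ then every element of $A$ (e.g.\ $\e_1$) lies in $I_r \cap I_k$ for any pair $r \neq k$.

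For (ii)$\,\Rightarrow\,$(iii), the plan is to bootstrap a single ``misplaced'' constant into complete degeneracy using only (m2) and (B0). Suppose $\e_k \in I_r$ with $r \neq k$. Given any $y \in A$ and any $j$, note that $\e_j \in I_j$ by (m1), so (m2) applied to $a = \e_k \in I_r$ and $b = \e_j \in I_j$ at position $r$ yields $q(\e_k, y_1, \dots, y_{r-1}, \e_j, y_{r+1}, \dots, y_n) \in I_j$ for any choice of $y_1, \dots, y_n$. Choosing $y_k := y$ (legal because $k \neq r$), axiom (B0) gives $q(\e_k, z_1, \dots, z_n) = z_k = y$, so $y \in I_j$, and hence $I_j = A$ for every $j$.

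The main step is (i)$\,\Rightarrow\,$(ii). Assuming $x \in I_r \cap I_k$ with $r \neq k$, I apply (m2) with $a = x \in I_r$, $b = x \in I_k$ at position $r$, and $y_i := \e_r$ for all $i \neq r$, to obtain
$$e \;:=\; q(x, \e_r, \dots, \e_r, x, \e_r, \dots, \e_r) \;\in\; I_k,$$
where $x$ sits at position $r$. The goal is to show $e = \e_r$, which gives $\e_r \in I_k$ with $r \neq k$, that is, condition (ii). To simplify $e$, I rewrite the $x$ at position $r$ as $q(x, \e_1, \dots, \e_n)$ via (B4) and each surrounding $\e_r$ as $q(x, \e_r, \dots, \e_r)$ via (B1). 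After this rewriting, $e$ is a nested $q$ all of whose inner applications begin with $x$, so (B2) collapses it to $q(x, z_1, \dots, z_n)$, where $z_j$ is the $j$-th argument of the $j$-th inner block. For $j \neq r$ that block is $q(x, \e_r, \dots, \e_r)$, giving $z_j = \e_r$; for $j = r$ that block is $q(x, \e_1, \dots, \e_n)$, giving $z_r = \e_r$. Thus $e = q(x, \e_r, \dots, \e_r) = \e_r$ by (B1).

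The only delicate point is bookkeeping in the application of (B2), which requires presenting every argument of the outer $q$ as an inner $q$-term with matching first coordinate $x$; once that is done the diagonal read-off is mechanical. All other steps are direct invocations of the multideal closure conditions and the $n\mathrm{BA}$ axioms.
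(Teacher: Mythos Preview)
Your proof is correct. The overall strategy---the cycle (iii)$\Rightarrow$(i)$\Rightarrow$(ii)$\Rightarrow$(iii)---and the key moves (a (B2)-collapse for (i)$\Rightarrow$(ii), a (B0)-projection for (ii)$\Rightarrow$(iii)) are the same as in the paper, but you carry them out more directly. In (i)$\Rightarrow$(ii) the paper first invokes Lemma~\ref{lem:idperm} to obtain $x^{(rk)}\in I_r$ and then places $x^{(rk)}$ at position $r$, whereas you place $x$ itself at position $r$ (using $x\in I_k$) and reduce $q(x,\e_r/\bar r,x/r)$ to $\e_r$ via (B4), (B1), (B2); this avoids any appeal to Lemma~\ref{lem:idperm}. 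Likewise, in (ii)$\Rightarrow$(iii) the paper first shows $I_r=A$ and then uses Lemma~\ref{lem:idperm} to propagate to the other components, while you get $I_j=A$ for every $j$ in a single step by choosing $b=\e_j\in I_j$ in (m2). The payoff of your variant is that the argument is fully self-contained from the axioms (B0)--(B4) and (m1)--(m2), which is reassuring given that Lemma~\ref{lem:idperm} is formally stated for genuine multideals rather than for tuples merely satisfying the closure properties.
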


\begin{proof}
(i)$\Rightarrow$(ii): since $x\in I_k$, by Lemma \ref{lem:idperm}
we have that $x^{(rk)}\in I_r$. By Definition \ref{def:fide}(m2), we conclude 
that $q(x, \e_k/\overline{r}, x^{(rk)}/r)=_{B2}q(x,\e_k,\ldots,\e_k)=\e_k\in I_r$.

\noindent  (ii)$\Rightarrow$(iii): given  $y\in A$, we have 
$y=q(\e_k,y/\overline{r},\e_r/r)\in I_r $ by Definition \ref{def:fide}(m2).
Hence $I_r=A$ and the result follows from Lemma  \ref{lem:idperm} since 
$A^{(rk)}=A$ for all $1\leq k\leq n$.

\noindent  (iii)$\Rightarrow$(i): trivial.
\end{proof}

%
%
%
%

\section{The relationship between multideals and congruences}\label{sec:multvscong}
In Lemma \ref{lem:congr} we have seen that, for any congruence $\theta$ on a $n$BA, the equivalence classes
$\e_i/\theta$ form a multideal, exactly as in the Boolean case $0/\theta$ is an ideal
and $1/\theta$ the corresponding filter. Conversely,
in the Boolean case, any ideal $I$ (resp. filter $F$)
defines the congruence $x\theta_Iy \Leftrightarrow x\oplus y\in I$ (resp.
$x\theta_Fy \Leftrightarrow x \leftrightarrow y\in F$).
Rephrasing this latter correspondence in the
$n$-ary case is a bit more complicated.

\subsection{The Boolean algebra of coordinates} 
 Let $\mathbf A$ be a $n\mathrm{BA}$, $a\in A$ and $i\in \hat n$.
We consider the factor congruence $\theta^i_a = \theta(a,\e_i) = \{(x,y): t_i(a,x,y)=x\}$ generated by $a$.

Recall that  $\preccurlyeq_r^i$ and $\leq^i$, denote the
right preorder and the partial order of the skew Boolean algebra $\mathbf{S}_i(A)=(A,\land_i,\bar\lor_i,\setminus_i,\e_i)$ respectively (see Section \ref{skewskew}). 

\begin{lemma}
  \begin{itemize}
    \item  $(\e_1/\theta^i_a,\dots,\e_n/\theta^i_a)$ is a multideal of  $\mathbf A$.
    \item $\e_i/\theta^i_a=\{x\in A: x \preccurlyeq_r^i a\}$.
      \end{itemize}
\end{lemma}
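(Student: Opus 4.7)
\smallskip\noindent\textbf{Proof plan.}

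For the first bullet, the plan is to apply Lemma \ref{lem:congr} directly. The relation $\theta^i_a = \{(x,y) : t_i(a,x,y)=x\}$ is exactly the factor congruence $\phi^i_a$ of the SRCA $R_i(\mathbf A)$ induced by the semicentral element $a$, and by the Proposition preceding this subsection we have $\phi^i_a = \theta^{R_i(\mathbf A)}(a,\e_i) = \theta^{\mathbf A}(a,\e_i)$, so $\theta^i_a$ is indeed a congruence of the full $n\mathrm{BA}$ $\mathbf A$. When $\theta^i_a$ is proper, Lemma \ref{lem:congr} immediately yields that $(\e_1/\theta^i_a,\dots,\e_n/\theta^i_a)$ is a multideal; in the (limiting) case $\theta^i_a = \nabla$ the tuple coincides with the degenerate multideal $I^\top$ of Lemma \ref{lem:top}, which is still a multideal by the convention adopted after that lemma.

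For the second bullet, the plan is to unfold definitions and reduce the claim to the defining identity of the right preorder on the skew $i$-reduct. By the description of $\theta^i_a$ as $\{(x,y) : t_i(a,x,y)=x\}$, an element $x\in A$ lies in the class $\e_i/\theta^i_a$ if and only if $t_i(a,x,\e_i)=x$. Now using the definition $t_d(u,v,w)=q(u,v/\bar d,w/d)$ with $d=\{i\}$, one computes
\[
t_i(a,x,\e_i) \;=\; q(a,\,x/\bar i,\,\e_i/i) \;=\; a \land_i x,
\]
where the last equality is the very definition of $\land_i$ (namely $u\land_i v = t_i(u,v,\e_i)$). Hence $x\in \e_i/\theta^i_a$ if and only if $a \land_i x = x$, which, by the definition of $\preccurlyeq_r^i$ in the right-handed skew Boolean algebra $\mathbf S_i(\mathbf A)$, is exactly $x \preccurlyeq_r^i a$.

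There is essentially no obstacle here: part (1) is a one-line invocation of Lemma \ref{lem:congr} once one recalls that $\theta^i_a$ is a congruence of $\mathbf A$ (not merely of the reduct), and part (2) is bookkeeping between the $q$-notation, the ternary operator $t_i$, and the skew operation $\land_i$. The only point that deserves care is the match between the equation $a\land_i x=x$ and the definition of the right preorder $\preccurlyeq_r^i$ in $\mathbf S_i(\mathbf A)$, namely that $x\preceq_r y$ means $y\land x = x$ (not $x\land y=x$), which is indeed what the computation gives.
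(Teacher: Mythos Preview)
Your proof is correct and follows essentially the same route as the paper's. Both parts invoke Lemma~\ref{lem:congr} for the first bullet and unfold the definition $x\,\theta^i_a\,\e_i \Leftrightarrow t_i(a,x,\e_i)=x \Leftrightarrow a\land_i x = x \Leftrightarrow x\preccurlyeq_r^i a$ for the second. Your treatment is in fact a bit more careful than the paper's: you explicitly justify, via the Proposition at the end of the previous subsection, that $\theta^i_a$ is a congruence on the full $n\mathrm{BA}$ $\mathbf A$ (not merely on the reduct), and you address the degenerate case $\theta^i_a=\nabla$, which the paper glosses over.
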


\begin{proof}
  The first item follows from Lemma \ref{lem:congr}. For the second one:
  by definition of $\theta^i_a$, we have $x \theta^i_a y$ iff $t_i(a,x,y)=x$.
  Then if $x\in\   \e_i/\theta^i_a$, we have 
  $x= t_i(a,x,\e_i) =a \land_i x$, therefore by definition of $\preccurlyeq_r^i $
  we conclude $x\preccurlyeq_r^i a$.
\end{proof}

The following proposition is a consequence of \cite{CS15}, Proposition 4.15, by observing that $t_i(a,x,y)=x$ for every $x,y\in \e_i/\theta^i_a$.

\begin{proposition}\label{prop:cc}
\begin{itemize}
\item[(i)] The set $\e_i/\theta^i_a$ is a subalgebra of the right Church $i$-reduct
  $(A,t_i,\e_i)$.
\item[(ii)] The algebra 
  $(\e_i/\theta^i_a,t_i,\e_i, a)$  is a $2\mathrm{CA}$.
  \item [(iii)]The set  $\downarrow_i\!\! a=\{x : x \leq^i a\}$ is the Boolean algebra of 2-central elements of $(\e_i/\theta^i_a,t_i,\e_i, a)$.

\end{itemize}
\end{proposition}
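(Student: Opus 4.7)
The plan is to prove the three items in sequence, each resting on the explicit hint that $t_i(a,x,y)=x$ for $x,y\in \e_i/\theta^i_a$ (which is just the definition of $\theta^i_a$ rewritten, since $(x,y)\in\theta^i_a$ whenever $x,y$ are both congruent to $\e_i$).

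First I would verify part (i): closure of $\e_i/\theta^i_a$ under $t_i$. The constant $\e_i$ is trivially in the class. For closure, if $x,y,z\in \e_i/\theta^i_a$ then each of them is $\theta^i_a$-related to $\e_i$, so by the congruence property of $\theta^i_a$ we get $t_i(x,y,z)\;\theta^i_a\;t_i(\e_i,\e_i,\e_i)$, and the latter equals $\e_i$ by Lemma \ref{lem:dec0}(1). Hence $t_i(x,y,z)\in \e_i/\theta^i_a$.

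For part (ii), I would first check $a\in \e_i/\theta^i_a$, i.e.\ $t_i(a,a,\e_i)=a$; this is $a\land_i a = a$, the skew-meet idempotency which holds in $\mathrm{S}_i(\mathbf A)$ (or directly from Lemma \ref{lem:dec}(2),(6)). The two defining identities of a $2\mathrm{CA}$ then follow: $t_i(\e_i,x,y)=y$ holds on all of $A$ because $R_i(\mathbf A)$ is a right Church algebra, and $t_i(a,x,y)=x$ for all $x,y\in \e_i/\theta^i_a$ is the key observation stated above. Thus $\e_i$ and $a$ play the two selector roles required for a $2\mathrm{CA}$ structure on $\e_i/\theta^i_a$.

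For part (iii) I would identify the $2$-central elements of this $2\mathrm{CA}$ with $\downarrow_i\!a$. Writing the two constants as $(a,\e_i)$ in the roles $(\e_1,\e_2)$, $c\in \e_i/\theta^i_a$ is $2$-central iff $t_i(c,a,\e_i)=c$, that is, iff $c\land_i a=c$ (by the very definition of $\land_i$). But $c\leq^i a$ means $c\land_i a=c=a\land_i c$; and the second conjunct is automatic for $c\in \e_i/\theta^i_a$, since the key observation gives $a\land_i c=t_i(a,c,\e_i)=c$. So the $2$-central elements coincide with $\downarrow_i\!a$. Finally, the general fact (Theorem \ref{thm:centrale} plus closure of central elements under $q$) shows that the central elements of any $n\mathrm{CA}$ form an $n\mathrm{BA}$; applied in dimension $2$, and using that the variety of $2\mathrm{BA}$s is term equivalent to Boolean algebras, this endows $\downarrow_i\!a$ with the announced Boolean algebra structure.

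The main obstacle, and really the only subtle point, is the matching in part (iii) between the abstract $2$-centrality equation and the skew-lattice order $\leq^i$; it is the observation $t_i(a,x,y)=x$ on $\e_i/\theta^i_a$ that makes the ``reverse'' inequality $a\land_i c=c$ free, collapsing $c\leq^i a$ to the single condition $c\land_i a=c$. Everything else is a bookkeeping exercise in the definitions of $t_i$, $\land_i$, and the induced congruence $\theta^i_a$.
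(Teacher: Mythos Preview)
Your proof is correct and, in fact, more explicit than the paper's. The paper does not argue the proposition directly: it simply records that it is a consequence of \cite[Proposition~4.15]{CS15} together with the observation that $t_i(a,x,y)=x$ for all $x,y\in \e_i/\theta^i_a$. You instead give a self-contained verification of all three items, using exactly that observation as the pivot.

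One point in part~(iii) deserves a word of clarification. You write that $c$ is $2$-central iff $t_i(c,a,\e_i)=c$, but by definition $2$-centrality also requires $c$ to be a factor element (with respect to $t_i$) in the algebra $(\e_i/\theta^i_a,t_i,\e_i,a)$. This is indeed automatic: since $R_i(\mathbf A)$ is an $\mathrm{SRCA}$, every element is already a factor element for $t_i$ (axioms D1--D3 hold), and the extra constant $a$ is preserved by $t_i(c,-,-)$ via D1, namely $t_i(c,a,a)=a$. So the reduction of $2$-centrality to the single equation $c\land_i a=c$ is justified; it would strengthen the write-up to say this explicitly.
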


 Notice that $a$ is maximal because, if $a\leq^i x\in \e_i/\theta^i_a$, then $a=a\land_i x =t_i(a,x,\e_i)=x$.
 
 \bigskip


We now specialise the above construction to the case $a=\e_j$ for a given $j\neq i$.

\begin{definition} Let $\mathbf A$ be a $n\mathrm{BA}$ and $i\neq j$. The \emph{Boolean center of $\mathbf A$}, denoted by $B_{ij}$, is the Boolean algebra of 2-central elements of the $2$CA $(A,t_i,\e_i, \e_j)$.
\end{definition}
By Proposition \ref{prop:cc} the carrier set of $B_{ij}$ is the set $\downarrow_i\!\!\e_j=\{x\in A: x\leq^i \e_j\}$
and we call {\em Boolean} any element of $B_{ij}$.

\begin{remark}
  The Boolean algebra $B_{ij}$ was defined in \cite{BLPS18} in a different but equivalent way (see  \cite[Section 6.1, Lemma 7(iii)]{BLPS18}).
  \end{remark}

Recall that the Boolean operations on $B_{ij}$ are defined as follows: 
 $$x\land_{ij} y= t_i(x,y,\e_i);\qquad x\lor_{ij} y= t_i(x,\e_j, y);\qquad \neg_{ij}(x)=t_i(x,\e_i,\e_j).$$
Remark that $\land_{ij}=\land_i$ and $\lor_{ij}=\bar\lor_i$ since
$x\bar\lor_iy= t_i(x,x,y)=t_i(x,t_i(x,\e_j,\e_i),y)=_{B2} t_i(x,\e_j,y)=x\lor_{ij} y$.
In the following we use the notation $\land_i, \bar\lor_i, \neg_{ij}$ for denoting the Boolean operations of $B_{ij}$.

In  \cite{BLPS18} a representation theorem is proved,
showing that any given $n$BA 
$\mathbf A$
can be embedded into  the $n$BA
of the $n$-central elements of the Boolean vector space 
$B_{ij}\times\ldots \times B_{ij}=B_{ij}^n$ (see Example \ref{cocchio}).
The proof of this result makes an essential use of the notion of
{\em coordinates} of elements of $\mathbf A$, that are
$n$-tuples of elements of $B_{ij}^n$, codifying the elements of
$\mathbf A$ as ``linear combinations'' of the constants (see Lemma \ref{lem:piu}(3)). In this paper, the notion of coordinate is again a central one,
being used to define the congruence associated to a multideal. In order
to highlight their relationship with the skew reducts of $\mathbf A$,
here we define the coordinates in terms of the $t_k$ operations.



\begin{definition}\label{def:coor}
 The \emph{coordinates} of $x\in A$ are the elements $x_k=t_k(x,\e_i,\e_j)$, for $1\leq k\leq n$.
\end{definition}

Notice that $x_i=\neg_{ij}(x)$. 


\begin{lemma}\label{lem:meet} Let $x,y^1,\ldots,y^n\in A$. We have:
\begin{itemize}
\item[(i)]  $x_k\land_i x_r = \e_i$ for all $k\neq r$.
\item[(ii)]  $x_1\bar\lor_i\dots\bar\lor_i x_n=\e_j$.
  \item[(iii)] 
$q(x,y^{1},\dots,y^{n})_k=q(x,(y^{1})_k,\dots,(y^{n})_k)=(x_1\land_i {(y^{1})}_k)\bar\lor_i\dots\bar\lor_i
(x_n\land_i {(y^{n})}_k)$.
\item[(iv)] $(x\land_i y)_k = x\land_i y_k$, for every $k\neq i$.
\item[(v)] $x_k\land_i x= x_k\land_i \e_k$, for every $k\neq i$.
\item[(vi)] $x_i\land_i x=\e_i$.
\item[(vii)]   If $x\in B_{ij}$, then
$$x_k=\begin{cases}\neg_{ij}x&\text{if $k=i$}\\ x&\text{if $k=j$}\\ \e_i&\text{otherwise}\end{cases}$$

\end{itemize}
\end{lemma}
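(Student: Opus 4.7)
The plan is to derive everything from a single composition identity. First I will show that, combining (B3) and (B1),
\[
q(q(x, b_1, \ldots, b_n), a_1, \ldots, a_n) = q(x, q(b_1, a_1, \ldots, a_n), \ldots, q(b_n, a_1, \ldots, a_n)) \quad (\star)
\]
holds for all $x, a_1, \ldots, a_n, b_1, \ldots, b_n \in A$; this follows from (B3) by setting $y := x$, $x^{\ell,0} := b_\ell$ and $x^{\ell, m} := a_m$ for $m \geq 1$, and collapsing the resulting $q(x, a_m, \ldots, a_m)$'s to $a_m$ via (B1). Specializing $(\star)$ to the switch $x_k = q(x, \e_i/\bar k, \e_j/k)$ and invoking (B0) will give the rotation formula
\[
q(x_k, a_1, \ldots, a_n) = q(x, a_i/\bar k, a_j/k), \quad (\star\star)
\]
on which the bulk of the computations rest.

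The first equality in (iii) will then be immediate: applying $(\star)$ with $b_\ell := y^\ell$ and the $a$'s equal to $\e_i/\bar k, \e_j/k$, each inner $q(y^\ell, \e_i/\bar k, \e_j/k)$ is $(y^\ell)_k$ by definition, while the outer left-hand side is $(q(x, y^1, \ldots, y^n))_k$ after unfolding $t_k$. Items (i), (iv), (v), (vi), (vii) will all follow a uniform recipe: use $(\star\star)$ (or $(\star)$) to bring each side into the form $q(x, c_1, \ldots, c_n)$, then apply (B2) to diagonalize any nested $q(x, \ldots)$ entries, reading off an $\e_i$ or $\e_j$ at each position. For (i) I will obtain $x_k \land_i x_r = q(x, \e_i/\bar k, x_r/k)$; since the $k$-th entry of $x_r = q(x, \e_i/\bar r, \e_j/r)$ is $\e_i$ (because $k \neq r$), (B2) followed by (B1) yields $\e_i$. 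For (v) and (vi) I will additionally rewrite $x$ as $q(x, \e_1, \ldots, \e_n)$ via (B4) before diagonalizing. For (vii) I will split into the three subcases $k = i$, $k = j$, $k \notin \{i, j\}$, using $x = q(x, \e_j/\bar i, \e_i/i)$ valid for $x \in B_{ij}$.

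For (ii), I will proceed by induction on $m$ with the strengthened claim
\[
U_m := x_1 \bar\lor_i \cdots \bar\lor_i x_m = q(x, x_1, x_2, \ldots, x_m, x_m, \ldots, x_m),
\]
with $x_m$ filling positions $m+1, \ldots, n$. The inductive step will unfold $U_{m+1} = t_i(U_m, U_m, x_{m+1})$, apply $(\star)$ with switch $U_m$, use $(\star\star)$ on each intermediate $q(x_\ell, \ldots)$ that appears, and conclude with two successive (B2)-diagonalizations. At $m = n$ the claim reads $U_n = q(x, x_1, \ldots, x_n)$; one further (B2)-diagonalization applied to the defining forms $x_k = q(x, \e_i/\bar k, \e_j/k)$ reduces this to $q(x, \e_j, \ldots, \e_j) = \e_j$ by (B1).

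For the second equality of (iii), rather than attempting a direct induction (which would require delicate tracking of how the Boolean factors $z_\ell := (y^\ell)_k$ interact with the non-Boolean switch $x$), I will invoke the fact that the variety of $n\mathrm{BA}$s is generated by $\mathbf n$ (Theorem \ref{lem:subirr} and Corollary \ref{cor:stn}), so it suffices to verify the equation for $x = \e_a$ in $\mathbf n$. There the switch makes $x_\ell = \e_j$ if $\ell = a$ and $x_\ell = \e_i$ otherwise; since $\e_i \land_i z = \e_i$ and $\e_j \land_i z = z$, and both $\e_i \bar\lor_i v = v$ and $u \bar\lor_i \e_i = u$ hold in $\mathbf n$, the right-hand side collapses to $z_a$, which matches $q(\e_a, z_1, \ldots, z_n) = z_a$ on the left. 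The main obstacle I anticipate is the bookkeeping of the repeated (B2)-diagonalizations inside the inductive step for (ii); everything else falls out mechanically from $(\star)$ and $(\star\star)$.
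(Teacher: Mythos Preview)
Your proposal is correct. The composition identity $(\star)$ and its specialization $(\star\star)$ are valid consequences of (B3), (B1), (B0); the case analyses for (i), (iv)--(vii) go through as sketched; and your induction for (ii) does close after exactly the two (B2)-diagonalizations you announce (at the first one the entries in positions $1,\ldots,m$ become $U_m$ and those in positions $m+1,\ldots,n$ become $x_{m+1}$; at the second one, after writing $x_{m+1}=q(x,x_{m+1},\ldots,x_{m+1})$ via (B1), the diagonal extracts $x_1,\ldots,x_m,x_{m+1},\ldots,x_{m+1}$ as required).

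The paper's own proof is, however, almost entirely different in spirit. For items (i)--(vi) it does not carry out any equational derivation at all: it simply observes that these are identities in the language of $n\mathrm{BA}$s and that, by Theorem~\ref{lem:subirr}, it suffices to check them in the generator~$\mathbf n$, which is a trivial finite inspection. Only item (vii) is argued equationally in the paper, and there the computations are phrased in terms of the skew operations (using $x=x\land_i\e_j$) rather than via your $(\star)$. So you have inverted the balance: you use the generation result only for the second equality of (iii), whereas the paper uses it for everything except (vii). Your route is more self-contained (it shows that (i), (ii), (iv)--(vii) follow directly from the axioms (B0)--(B4) without appeal to the structure theory), at the cost of considerably more bookkeeping; the paper's route is a one-line semantic shortcut that trades explicitness for brevity.
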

 
\begin{proof} 
(i)-(vi) It is sufficient to check in the generator $\mathbf n$ of the variety $n$BA. 

%
%

(vii)  ($k=i$): By definition of $\neg_{ij}$ we have $x_i = t_i(x,\e_i,\e_j)=\neg_{ij}(x)$.

($k\neq i,j$): $x_k = t_k(x,\e_i,\e_j) = t_k(x\land_i \e_j,\e_i,\e_j)=t_k(t_i(x, \e_j,\e_i),\e_i,\e_j) = 
t_i(x,\e_i,\e_i)=\e_i$.

($k=j$): $x_j = t_j(x,\e_i,\e_j) = t_j(x\land_i \e_j,\e_i,\e_j)=t_j(t_i(x, \e_j,\e_i),\e_i,\e_j) = 
t_i(x,\e_j,\e_i)=x$.
\end{proof}

\begin{proposition}\label{lem:bool}
 The following conditions are equivalent for an element $x\in A$:
 \begin{itemize}
 \item[(a)] $x$ is Boolean;
  \item[(b)]  $x\land_i \e_j=x$; 
\item[(c)]   $x=y_k$, for some $y\in A$ and index $1\leq k\leq n$;
\item[(d)]  $x=x_j$;
\item[(e)]  $x_k=\e_i$, for every $k\neq i,j$;
\item[(f)]  $x=(x_i)_i$.
\end{itemize}
\end{proposition}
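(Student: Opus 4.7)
The plan is to reduce everything to a routine computation in the generator $\mathbf n$. By Theorem \ref{lem:subirr} we have $n\mathrm{BA} = \mathcal V(\mathbf n)$ and by Corollary \ref{cor:stn} every $n\mathrm{BA}$ embeds subdirectly into a power of $\mathbf n$, so any quasi-identity in one variable that holds in $\mathbf n$ holds in every $n\mathrm{BA}$. Since (b), (d), (e), (f) are each equations (or finite conjunctions of equations) in $x$, the biconditionals among them will lift directly; only the non-equational conditions (a) and (c) need a preliminary reduction.

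For (a)$\Leftrightarrow$(b), I would use that the carrier of $B_{ij}$ is $\downarrow_i\!\!\e_j$ by Proposition \ref{prop:cc}, so (a) says $x\land_i\e_j=x$ together with $\e_j\land_i x=x$; but the second equation holds identically in any $n\mathrm{BA}$, because $\e_j\land_i x=t_i(\e_j,x,\e_i)=q(\e_j,x/\bar i,\e_i/i)=x$ by (B0) (since $j\neq i$). For (c), the implication (d)$\Rightarrow$(c) is immediate (take $y=x$, $k=j$), while (c)$\Rightarrow$(a) reduces to the single-variable equation $y_k\land_i\e_j=y_k$; both halves then fit into the $\mathbf n$-reduction.

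In $\mathbf n$, every element has the form $x=\e_r$, and (B0) gives $x_k=t_k(\e_r,\e_i,\e_j)=\e_j$ if $r=k$ and $\e_i$ otherwise. A case analysis then shows that each of (b), (d), (e), (f), as well as the auxiliary equation $y_k\land_i\e_j=y_k$, is equivalent to $r\in\{i,j\}$. The main obstacle---really the only slightly delicate step---is (f): since $x_i=\e_j$ when $r=i$ but $x_i=\e_i$ when $r\neq i$, the outer application of $t_i(-,\e_i,\e_j)$ swaps these values, producing $(x_i)_i=\e_i$ when $r=i$ and $(x_i)_i=\e_j$ otherwise; matching this against $\e_r$ again pins down $r\in\{i,j\}$. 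Aside from this bookkeeping the verification is entirely elementary, and the subdirect-embedding argument then closes the proof in full generality.
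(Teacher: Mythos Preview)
Your proof is correct and takes a genuinely different route from the paper's. The paper proceeds by direct equational manipulation with the axioms (B0)--(B4): it shows (a)$\Leftrightarrow$(b) as you do, then establishes (c)$\Rightarrow$(b), (b)$\Rightarrow$(d), (d)$\Rightarrow$(c), (a)$\Rightarrow$(e), (e)$\Rightarrow$(d), and (f)$\Leftrightarrow$(b) by explicit computations, invoking Lemma~\ref{lem:meet} in several places; the step (e)$\Rightarrow$(d) in particular uses the distributivity of the skew reduct and three items of that lemma. Your approach instead observes that (b), (d), (e), (f) are equational in~$x$, so the equivalences among them are quasi-identities, and appeals to Corollary~\ref{cor:stn} to reduce the verification to the algebra~$\mathbf n$, where everything is a finite case analysis on the index~$r$ with $x=\e_r$. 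This is in the same spirit as what the paper does for Lemma~\ref{lem:meet}(i)--(vi) and Lemma~\ref{lem:piu}, but extended from identities to quasi-identities via the observation that the latter are preserved under subalgebras and products. Your method is shorter and more uniform; the paper's explicit derivations have the advantage of not relying on the subdirect representation and of exhibiting, e.g., the useful equality $(x_i)_i = x\land_i\e_j$ along the way.
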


\begin{proof} (a) $\Leftrightarrow$ (b):  We have that $x\leq^i \e_j$ iff $x\land_i \e_j = x$ and $\e_j\land_i x = x$.
 The conclusion is obtained because the latter equality is  trivially true.

(c) $\Rightarrow$ (b):  $y_k\land_i \e_j=t_i(t_k(y,\e_i,\e_j),\e_j,\e_i)=_{B3} q(y,\e_j/k,\e_i/\bar k)= t_k(y,\e_i,\e_j)=y_k$.

(b) $\Rightarrow$ (d): If $x\land_i \e_j=x$, then $x_j=t_j(x,\e_i,\e_j) = t_j(x\land_i \e_j,\e_i,\e_j) =
t_j(t_i(x,\e_j,\e_i),\e_i,\e_j)= t_i(x,\e_j,\e_i)=x\land_i \e_j=x$.

(d) $\Rightarrow$ (c): Trivial.

(a) $\Rightarrow$ (e): By Lemma \ref{lem:meet}(iv),(vii) $x_k=(x\land_i \e_j)_k = x\land_i (\e_j)_k=\e_i$, for every $k\neq i$.

(e) $\Rightarrow$ (d): By Lemma \ref{lem:meet}(ii) the join of all coordinates of $x$ in $B_{ij}$ is the top element $\e_j$. By hypothesis (e) we derive $x_i\bar\lor x_j=\e_j$. Then, by applying the distributive property of $\land_i$ w.r.t. $\bar\lor_i$ of skew BAs, we obtain:
$x=\e_j\land_i x=_{L.\ref{lem:meet}(ii)}(x_i\bar\lor_i x_j)\land_i x=(x_i\land_i x) \bar\lor_i(x_j \land_i x)=_{L.\ref{lem:meet}(vi)} \e_i \bar\lor_i(x_j \land_i x) = x_j \land_i x=_{L.\ref{lem:meet}(v)} x_j \land_i \e_j=x_j$.

(b) $\Rightarrow$ (e): Let $k\neq i,j$. Then we have: $x_k=(x\land_i\e_j)_k=t_i(x,\e_j,\e_i)_k=t_i(x,(\e_j)_k,(\e_i)_k)=t_i(x,\e_i,\e_i)=\e_i$.

%
%
%

(f) $\Leftrightarrow$ (b): $(x_i)_i = t_i(t_i(x,\e_i,\e_j),\e_i,\e_j)=t_i(x, \e_j,\e_i)=x\land_i \e_j$. Then $(x_i)_i =x$ iff $x\land_i \e_j=x$.
 \end{proof}
 
 By Lemma \ref{lem:meet}(iv) and Lemma \ref{lem:bool}(d)  $x\land_i y$ is a Boolean element, for every $x\in A$ and $y\in B_{ij}$.

\subsection{The congruence defined by a multideal}

Let $\mathbf A$ be a $n$BA and $B_{ij}$ be the Boolean center of $\mathbf A$.

\begin{lemma} Let $I$ be a multideal on $\mathbf A$. Then $I_*=B_{ij} \cap I_i$ is a Boolean ideal and $I^*=B_{ij} \cap I_j$ is the Boolean filter complement of $I_*$.
\end{lemma}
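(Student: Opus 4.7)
My plan is to split the statement into two claims: (1) $I_*$ is a Boolean ideal of $B_{ij}$, and (2) $I^*=\neg_{ij}(I_*)$. Once (2) is established, standard Boolean algebra reasoning gives that $I^*$ is the complementary filter of $I_*$ in $B_{ij}$ automatically.

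For claim (1): the bottom element $\e_i$ lies in $I_i$ by (I1) of Proposition~\ref{prop:caract_multideals} and trivially in $B_{ij}$. To see that $I_*$ is closed under the Boolean join $\overline\vee_i$, I rewrite $a\overline\vee_i b = t_i(a,a,b)$ for $a,b\in I_*$ and apply (I3) with $r=k=i$ and $y=a$, which lands in $I_i$; $B_{ij}$ is of course closed under its own operations. For downward closure in $B_{ij}$: if $b\leq_{ij}a$ with $a\in I_*$ and $b\in B_{ij}$, then because $B_{ij}$ is a Boolean algebra the meet $\land_i$ is commutative on $B_{ij}$, so $a\land_i b = b\land_i a = b$; in the right-handed skew BA $S_i(\mathbf A)$ this is precisely $b\preceq^i a$, and the corollary just before the lemma states that $I_i$ is a $\preceq^i$-ideal of $S_i(\mathbf A)$, whence $b\in I_i$ and so $b\in I_*$.

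For claim (2): one inclusion is immediate. For $a\in I_*$, the element $\neg_{ij}(a)=t_i(a,\e_i,\e_j)$ lies in $I_j$ by (I2) with $r=i$, $k=j$, $y=\e_i$, $b=\e_j$ (using $\e_j\in I_j$), and it lies in $B_{ij}$ as $B_{ij}$ is closed under Boolean negation; thus $\neg_{ij}(a)\in I^*$. The reverse inclusion hinges on the symmetric identity
\[\neg_{ij}(b)\;=\;t_j(b,\e_j,\e_i)\qquad\text{for every }b\in B_{ij}.\]
Granting this, for $b\in I^*$ condition (I2) with $r=j$, $k=i$, $y=\e_j$ (and $\e_i\in I_i$) gives $\neg_{ij}(b)=t_j(b,\e_j,\e_i)\in I_i$, hence $\neg_{ij}(b)\in I_*$; involutivity of $\neg_{ij}$ on $B_{ij}$ then yields $b=\neg_{ij}(\neg_{ij}(b))\in\neg_{ij}(I_*)$.

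The only nontrivial step is the identity $\neg_{ij}(b)=t_j(b,\e_j,\e_i)$ for $b\in B_{ij}$. I would prove it by substituting the characterising equation $b=t_i(b,\e_j,\e_i)=q(b,\e_j/\bar i,\e_i/i)$ for Boolean elements into the outer position of $t_j(b,\e_j,\e_i)=q(b,\e_j/\bar j,\e_i/j)$ and collapsing via axiom (B3), which is a routine one-line computation. Alternatively, Lemma~\ref{sigma}(5) identifies $\neg_{ij}(b)$ with the transposed element $b^{(ij)}$, and Lemma~\ref{lem:idperm} ($I_i=I_j^{(ij)}$) then yields $\neg_{ij}(b)\in I_i$ directly from $b\in I_j$, bypassing the identity altogether. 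Either route is elementary, so this is the only point in the argument that demands more than bookkeeping with the multideal axioms.
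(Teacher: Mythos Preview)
Your argument is correct and follows the same overall strategy as the paper: show $I_*$ is a Boolean ideal, then show $I^*=\neg_{ij}(I_*)$. The details differ only slightly. For the ideal part, the paper verifies the absorbing property $x\land_i z\in I_*$ by a direct computation in $B_{ij}$ together with (I2), whereas you appeal to the $\preceq^i$-ideal Corollary; both are fine. For the reverse inclusion of the filter complement, the paper's route is a little more direct than yours: given $\neg_{ij}x=t_i(x,\e_i,\e_j)\in I_j$, one simply applies $\neg_{ij}$ again and observes that $t_i(a,\e_i,\e_j)=q(a,\e_i/\bar i,\e_j/i)$ has $\e_i\in I_i$ sitting at position $j$, so (m2) immediately yields $x=\neg_{ij}\neg_{ij}x\in I_i$. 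This avoids both the auxiliary identity $\neg_{ij}(b)=t_j(b,\e_j,\e_i)$ and the appeal to Lemma~\ref{lem:idperm}. (Incidentally, your claim that Lemma~\ref{sigma}(5) alone identifies $\neg_{ij}(b)$ with $b^{(ij)}$ is slightly overstated: that identification needs the Boolean-element equation $b=t_i(b,\e_j,\e_i)$ and a short (B3)-computation, exactly as in your first route.) None of this affects correctness; your proof stands.
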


\begin{proof} Recall that, in $B_{ij}$, $\e_i$ is the bottom element, $\e_j$ is the top element and $x\in B_{ij}$ iff $x\land_i \e_j=x$. 
We prove that $I_*$ is a Boolean ideal. First $\e_i\in I_*$.
If $x,y\in I_*$ and $z\in B_{ij}$, then we prove $x\bar\lor_i y, x\land_i z\in I_*$. 
By Proposition \ref{prop:caract_multideals}(I2) $x\bar\lor_i y$ and $x\land_i z$ belong to $I_i$. Moreover, we have:
$$(x\bar\lor_i y)\land_i \e_j=t_i(t_i(x,x,y),\e_j,\e_i)=_{B3} t_i(x,t_i(x,\e_j,\e_i),t_i(y, \e_j,\e_i))=t_i(x,x,y)=x\bar\lor_i y.$$ 
$$x\land_i z\land_i \e_j=t_i(t_i(x,z,\e_i),\e_j,\e_i)= t_i(x,t_i(z,\e_j,\e_i),\e_i)=t_i(x,z,\e_i)=x\land_i z,$$ 
because $x,y,z$ are Boolean.

We now show that $I^*$ is the Boolean filter complement of $I_*$. 

($x\in I_*\Rightarrow \neg_{ij}x\in I^*$): As $x\in I_i\cap B_{ij}$, then by Prop. \ref{prop:caract_multideals}(I2) $\neg_{ij}x=t_i(x,\e_i,\e_j)\in I_j\cap B_{ij}$. 

($\neg_{ij}x\in I^*\Rightarrow x\in I_*$): 
As $t_i(x,\e_i,\e_j)\in I_j$, then $x=\neg_{ij}\neg_{ij}x= t_i(t_i(x,\e_i,\e_j),\e_i,\e_j)\in I_i$.
%
%
\end{proof}


The following lemma characterises multideals in terms of coordinates.

\begin{lemma}\label{lem:boh}   Let $x\in A$. 
  \begin{itemize}
   \item [(a)]  $x\in I_r$ if and only if $x_r\in I^*$.
  \item[(b)] If $x\in I_i$, then $x_k \in I_*$ for every  $k\neq i$.
   \end{itemize}
\end{lemma}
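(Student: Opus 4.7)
The plan is to establish both parts by unfolding $x_r = t_r(x,\e_i,\e_j) = q(x,\e_i/\bar r,\e_j/r)$ and applying the closure axiom (m2) of Definition \ref{def:fide}. First note that $x_r$ is always Boolean by Proposition \ref{lem:bool}(c), so $x_r \in I^*$ is equivalent to $x_r \in I_j$.

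For the forward direction of (a) I would apply (m2) with $a = x \in I_r$, taking $b = \e_j$ at position $r$; since $\e_j \in I_j$ by (m1) and the remaining entries $\e_i$ are unrestricted in (m2), the conclusion is exactly $x_r \in I_j$.

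For the backward direction of (a) the key ingredient will be the identity
$$x = q\bigl(x_r,\, y_1,\dots,y_{j-1},\, \e_r,\, y_{j+1},\dots,y_n\bigr), \qquad y_m = x \text{ for all } m \neq j.$$
By primality of the variety $n\mathrm{BA}$ (Theorem \ref{prop:nbaprim}), it suffices to verify this in the generator $\mathbf n$: if $x = \e_r$, then $x_r = \e_j$ and the right-hand side selects the $j$-th argument $\e_r = x$; if $x = \e_m$ with $m \neq r$, then $x_r = \e_i$ and, because $i \neq j$, the right-hand side selects the $i$-th argument $x$. Given $x_r \in I_j$, applying (m2) with $a = x_r \in I_j$ and $b = \e_r \in I_r$ at position $j$ (and the other $y_m = x$) then yields $x \in I_r$.

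For (b), the same technique works in one shot: for $k \neq i$, the position-$i$ entry of $q(x,\e_i/\bar k,\e_j/k)$ is $\e_i$, so taking $a = x \in I_i$ and $b = \e_i \in I_i$ in (m2) gives $x_k \in I_i$, which combined with $x_k \in B_{ij}$ yields $x_k \in I_* = B_{ij} \cap I_i$. The only slightly delicate step I anticipate is guessing the explicit $q$-expression used in the backward direction of (a); once it is in hand, the rest is pure bookkeeping with the closure conditions for multideals.
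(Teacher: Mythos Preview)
Your proof is correct. The forward direction of (a) and part (b) are essentially the same as in the paper (the paper's one-line justification of (b) is exactly your application of (m2) with $b=\e_i$ at position $i$).

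The backward direction of (a), however, is handled quite differently. The paper first treats the case $r=i$: it applies the permutation $(ij)$ to $x_i\in I_j$ to obtain $x_i^{(ij)}=x\land_i\e_j\in I_i$ via Lemma~\ref{lem:idperm}, and then invokes the fact that $I_i$ is a $\preceq^i$-ideal of the skew $i$-reduct to pass from $x\land_i\e_j\in I_i$ to $x\in I_i$; the general case $r\neq i$ is then reduced to $r=i$ by showing $(x^{(ir)})_i=x_r$ and again using Lemma~\ref{lem:idperm}. Your argument instead produces a single identity $x=q(x_r,\,x/\bar j,\,\e_r/j)$, checked in the generator $\mathbf n$, and concludes by one application of (m2). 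This is more elementary and uniform in $r$: it bypasses both the permutation machinery and the skew-ideal property. The paper's route, on the other hand, illustrates how the skew structure and the $S_n$-action interact with multideals, which is thematically closer to the surrounding development. A minor remark: the justification for checking the identity in $\mathbf n$ is really Theorem~\ref{lem:subirr} (that $\mathbf n$ generates the variety), rather than primality per se.
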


\begin{proof}
(a) We start with $r=i$.\\
  ($\Rightarrow$) It follows from $x_i = t_i(x,\e_i,\e_j)$, because $x\in I_i$ and $e_j\in B_{ij}\cap I_j$.

 \noindent  ($\Leftarrow$) By hypothesis $x_i\in B_{ij}\cap I_j$. By Lemma \ref{lem:idperm} we have that $x_i^{(ij)}\in I_i$. Now $x_i^{(ij)} =q(x_i, \e_{(ij)1},\dots,\e_{(ij)n})= q(t_i(x,\e_i,\e_j), \e_{(ij)1},\dots,\e_{(ij)n})=t_i(x,\e_j,\e_i)=x\land_i \e_j \in I_i$. Now $x\in I_i$ if we are able to prove that $x\preceq^i   x\land_i \e_j$, because $I_i$ is a $\preceq^i$-ideal of the skew $i$-reduct of $\mathbf A$. Recalling that $x\preceq^i   x\land_i \e_j\Leftrightarrow x\land_i \e_j\land_i x = x$, we conclude: $x\land_i \e_j\land_i x= x\land_i x=x$ because $\e_j\land_i x=x$.
 

 We analyse $r\neq i$.
 By definition of $x^{(ir)}$ we derive $$(x^{(ir)})_i =  t_i(q(x,\e_{(ir)1},\dots,\e_{(ir)n}),\e_i,\e_j) =_{B3} q(x,\e_j/r,\e_i/\bar r)= t_r(x,\e_i,\e_j)=x_r.$$ 
Then,

$x_r\in I^*\Leftrightarrow  x_r=(x^{(ir)})_i\in I^*\Leftrightarrow x^{(ir)}\in I_i \Leftrightarrow_{L.\ref{lem:idperm}} x= (x^{(ir)})^{(ir)}\in I_r$.

(b)  By   $x_k = t_k(x,\e_i,\e_j)$ and $x\in I_i$.
%
%
\end{proof}


We consider the homomorphism $f_I: B_{ij}\to B_{ij}/I_*$ and
we define on $A$ the following equivalence relation:
$$x\theta_I y \Leftrightarrow \forall k.f_I(x_k)=f_I(y_k),$$
 where $x_k,y_k$ are the $k$-coordinates of $x$ and $y$, respectively (see Definition \ref{def:coor}).

\begin{proposition} $\theta_I$ is a congruence on $\mathbf A$.
\end{proposition}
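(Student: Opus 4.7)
The plan is to verify that $\theta_I$ is an equivalence relation (immediate, since it is defined as the kernel of a map) and then that it is compatible with the operations of the $n\mathrm{BA}$, that is, with the single operator $q$ (the constants $\e_1,\dots,\e_n$ pose no problem: every congruence is trivially compatible with constants).

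First I would observe that reflexivity, symmetry and transitivity of $\theta_I$ follow at once from the fact that it is the intersection, over $k\in\hat n$, of the equivalences given by $x\sim_k y\Leftrightarrow f_I(x_k)=f_I(y_k)$, each of which is the kernel of the composite map $x\mapsto f_I(x_k)$.

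The core of the proof is the compatibility with $q$. Suppose $x\,\theta_I\,x'$ and $y^s\,\theta_I\,(y')^s$ for each $s=1,\dots,n$; I must show $q(x,y^1,\dots,y^n)\,\theta_I\,q(x',(y')^1,\dots,(y')^n)$, i.e.\ that $f_I$ agrees on the $k$-th coordinates for every $k$. The key ingredient is Lemma \ref{lem:meet}(iii), which expresses the $k$-th coordinate of $q(x,y^1,\dots,y^n)$ as a purely Boolean expression in the coordinates of $x$ and of the $y^s$:
\[
q(x,y^1,\dots,y^n)_k\;=\;(x_1\land_i (y^1)_k)\,\bar\lor_i\,\cdots\,\bar\lor_i\,(x_n\land_i (y^n)_k),
\]
where all the operands lie in $B_{ij}$. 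Since $f_I\colon B_{ij}\to B_{ij}/I_*$ is a Boolean homomorphism, it commutes with $\land_i$ and $\bar\lor_i$ on Boolean elements. Thus from $f_I(x_r)=f_I(x'_r)$ and $f_I((y^s)_k)=f_I((y')^s_k)$, applying $f_I$ to both sides of the displayed identity and to the analogous one for $q(x',(y')^1,\dots,(y')^n)_k$ yields equality, giving $q(x,\bar y)\,\theta_I\,q(x',\bar y')$.

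The only step requiring a small check is ensuring that the operands $x_r\land_i (y^s)_k$ that appear in the formula really are Boolean elements, so that $f_I$ applies to them: this is Proposition \ref{lem:bool} together with the remark right after it, since $(y^s)_k$ is Boolean by Proposition \ref{lem:bool}(c) and $x_r\land_i z$ is Boolean whenever $z$ is Boolean. I expect no serious obstacle; the whole argument reduces to the fact that the coordinate map $x\mapsto(x_1,\dots,x_n)$ transports $q$ into Boolean polynomial expressions on $B_{ij}$, and $\theta_I$ is by construction the pullback along this map of the product congruence induced by $I_*$ on $B_{ij}^n$.
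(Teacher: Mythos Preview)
Your proposal is correct and follows essentially the same approach as the paper: both arguments reduce compatibility with $q$ to Lemma~\ref{lem:meet}(iii), which rewrites $q(x,y^1,\dots,y^n)_k$ as a Boolean polynomial in the coordinates, and then invoke that $f_I$ is a Boolean homomorphism. Your version is slightly more explicit (the equivalence-relation part and the check that the operands are Boolean), but the core idea is identical.
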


\begin{proof} Let $a,b,x^1,y^1,\dots,x^n,y^n$ be elements such that $a \theta_I b$ and $x^k\theta_I y^k$, for every $k$. Then $q(a,x^1,\dots,x^n)\theta_Iq(b,y^1,\dots,y^n)$
iff $\forall k. f_I(q(a,x^1,\dots,x^n)_k)=f_I(q(b,y^1,\dots,y^n)_k)$. The conclusion follows because $f_I$ is a Boolean homomorphism and by Lemma \ref{lem:meet}(iii)
 $q(a,x^1,\dots,x^n)_k= q(a,(x^1)_k,\dots,(x^n)_k)=(a_1\land_i {(x^{1})}_k)\bar\lor_i\dots\bar\lor_i
(a_n\land_i {(x^{n})}_k)$.
\end{proof}

We define a new term operation to be used in Theorem \ref{thm:main}:
$$x+_iy=q(x,t_i(y,\e_i,\e_1),\dots,y,\dots,t_i(y,\e_i,\e_n));\quad \text{($y$ at position $i$).}$$ 


\begin{lemma}\label{lem:piu} Let $x,y\in A$ and $x_1,\dots,x_n$ be the coordinates of $x$. Then
\begin{enumerate}
\item $x+_i\e_i=\e_i+_i x=x$;
\item $x+_iy=y+_ix$;
\item $x+_i \e_k=\e_k+_ix = x_i\land_i\e_k$ ($k\neq i$).
\item $x+_ix=\e_i$;



\item The value of the expression $E\equiv (x_1\land_i\e_1)+_i((x_2\land_i \e_2)+_i(\dots+_i (x_n\land_i\e_n))\dots)$
is independent of the order of its parentheses. Without loss of generality,  we write 
$(x_1\land_i\e_1)+_i(x_2\land_i \e_2)+_i\dots+_i(x_n\land_i\e_n)$ for the expression $E$. Then we have:
 $$(x_1\land_i\e_{1})+_i(x_2\land_i \e_2)+_i\dots+_i (x_n\land_i\e_{n})=x.$$
\item If $x$ and $y$ have the same coordinates, then $x=y$.

\end{enumerate}
\end{lemma}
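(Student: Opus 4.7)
The plan is to reduce each of the six items to direct computations in the generator $\mathbf n$ of the variety $n\mathrm{BA}$ (invoking Corollary~\ref{cor:stn} to lift identities), in the same spirit as the proof of Lemma~\ref{lem:meet}(i)--(vi). The key preliminary step is to evaluate $+_i$ on constants. For $x=\e_a$ and $y=\e_b$, unpacking the inner terms $t_i(y,\e_i,\e_k)$ for $k\neq i$ by B0 and then applying B0 to the outer $q$ gives
\[
\e_a +_i \e_b \;=\; \begin{cases} \e_b & \text{if } a=i, \\ \e_a & \text{if } b=i,\\ \e_i & \text{if } a\neq i \text{ and } b\neq i. \end{cases}
\]
In particular $\e_i$ is a two-sided $+_i$-identity and any two non-$\e_i$ constants annihilate to $\e_i$. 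Parts (1), (2), (4) can be read directly off this table.

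For (3), I would additionally compute $x_i\land_i\e_k$ in $\mathbf n$ for $k\neq i$: when $a=i$ one has $x_i=\e_j$ and $\e_j\land_i\e_k=\e_k$, whereas when $a\neq i$ one has $x_i=\e_i$ and $\e_i\land_i\e_k=\e_i$. These values agree with those computed for $\e_a+_i\e_k$, so combined with (2) they yield $\e_k+_i x = x+_i\e_k = x_i\land_i\e_k$.

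The main obstacle is (5), because $+_i$ is \emph{not} associative on an arbitrary $n\mathrm{BA}$: for instance in $\mathbf n$ with $n=3$ and $i=1$, one has $(\e_2+_1\e_3)+_1\e_3=\e_3$ while $\e_2+_1(\e_3+_1\e_3)=\e_2$. What rescues the specific expression $E$ is a disjointness phenomenon. In $\mathbf n$ with $x=\e_a$, the summand $x_k\land_i\e_k$ evaluates to $\e_k$ exactly when $k=a$ and to $\e_i$ otherwise (the case $k=a=i$ collapses to $\e_i$ by B1). Thus at most one summand of $E$ differs from the identity $\e_i$. Any parenthesization of such a sum collapses at each inner node by applying the identity law (1) to combinations of the form $\e_i+_i\e_i$ or $\e_i+_i\e_a=\e_a+_i\e_i$, yielding $\e_a=x$ when $a\neq i$ and $\e_i=x$ when $a=i$. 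Since for each fixed parenthesization the resulting identity holds in $\mathbf n$, it holds in every $n\mathrm{BA}$ by Corollary~\ref{cor:stn}; this simultaneously delivers parenthesis-independence and the equality $E=x$.

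Finally, (6) is a direct consequence of (5): if $x_k=y_k$ for every $k$, then the canonical expressions furnished by (5) for $x$ and for $y$ are term-by-term identical, so $x=y$.
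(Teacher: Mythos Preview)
Your proof is correct and follows exactly the paper's approach: the paper's proof simply reads ``It is easy to check identities (1)--(5) in the generator $\mathbf n$ of the variety $n$BA. (6) is a consequence of (5),'' and your argument is a faithful (and more detailed) execution of precisely this plan. Your explicit treatment of (5)---observing that $+_i$ is not associative in general but that in $\mathbf n$ at most one summand of $E$ is distinct from the identity $\e_i$---is a useful elaboration of what the paper leaves implicit.
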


\begin{proof}  It is easy to check identities (1)-(5) in the generator $\mathbf n$ of the variety $n$BA. (6) is a consequence of (5).
\end{proof}

\begin{theorem}\label{thm:main}
  Let $\phi$ be a congruence and $H=(H_1,\dots,H_n)$ be a multideal of $\mathbf A$. Then
  $$\theta_{I(\phi)}=\phi\  \text{ and } I(\theta_H)=H.$$
\end{theorem}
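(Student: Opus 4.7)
The plan is to verify each equality by unpacking the definitions of the two correspondences. For $\theta_{I(\phi)} = \phi$, set $H = I(\phi)$, so that $H_k = \e_k/\phi$ and the Boolean ideal $H_* = H_i \cap B_{ij}$ coincides with the zero-class of the Boolean congruence obtained by restricting $\phi$ to $B_{ij}$. Since any Boolean congruence is determined by its zero-class, for $u,v \in B_{ij}$ one has $u\,\phi\,v$ iff $f_H(u) = f_H(v)$ in $B_{ij}/H_*$. The defining formula of $\theta_H$ thus reduces to: $x\,\theta_{I(\phi)}\,y$ iff $x_k\,\phi\,y_k$ for every $k \in \hat n$.

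The inclusion $\phi \subseteq \theta_{I(\phi)}$ is then immediate, since $x_k = t_k(x, \e_i, \e_j)$ is a unary term in $x$. For the reverse inclusion, Lemma \ref{lem:piu}(5) realises $x$ as a fixed term in its coordinates, namely $(x_1 \land_i \e_1) +_i \cdots +_i (x_n \land_i \e_n)$, and the analogous identity holds for $y$; as $\phi$ is a congruence and the respective coordinates are pairwise $\phi$-related, $x\,\phi\,y$ follows.

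For $I(\theta_H) = H$ it suffices to show $\e_k/\theta_H = H_k$ for every $k$. A direct computation via (B0) gives $(\e_k)_r = t_r(\e_k, \e_i, \e_j) = \e_j$ if $r = k$ and $\e_i$ otherwise. Since $f_H(\e_i) = 0$ and $f_H(\e_j) = 1$ in $B_{ij}/H_*$, the condition $x\,\theta_H\,\e_k$ unfolds to: $x_k \in H^*$ and $x_r \in H_*$ for every $r \neq k$. By Lemma \ref{lem:boh}(a) the first clause already forces $x \in H_k$, and conversely one must check that when $x \in H_k$ the remaining coordinate conditions hold automatically.

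This last automatic-ness check is the main obstacle. Lemma \ref{lem:boh}(b) settles it directly only when $k = i$, so for $k \neq i$ I plan to invoke the symmetry encoded by Lemma \ref{lem:idperm}, which gives $x \in H_k$ iff $x^{(ik)} \in H_i$. Starting from $x^{(ik)} = q(x, \e_{(ik)1}, \dots, \e_{(ik)n})$, a short (B3)-computation shows that the transposition $(ik)$ swaps the $i$-th and $k$-th coordinates of $x$ while fixing all the others. Applying Lemma \ref{lem:boh}(a),(b) to $x^{(ik)} \in H_i$ therefore yields $x_k = (x^{(ik)})_i \in H^*$, $x_i = (x^{(ik)})_k \in H_*$, and $x_r = (x^{(ik)})_r \in H_*$ for $r \neq i, k$, completing the equivalence $x \in H_k \iff x\,\theta_H\,\e_k$ and hence the identification $I(\theta_H) = H$.
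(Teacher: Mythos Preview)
Your proof is correct and follows essentially the same line as the paper's. For $\theta_{I(\phi)}=\phi$ both arguments are identical in substance: reduce to $x_k\,\phi\,y_k$ for all $k$ via the Boolean ideal $I(\phi)_*$, and then use Lemma~\ref{lem:piu}(5) to reconstruct $x$ and $y$ from their coordinates. For $I(\theta_H)=H$ both handle $k=i$ directly via Lemma~\ref{lem:boh}(a),(b) and then treat $k\neq i$ through the symmetry of Lemma~\ref{lem:idperm}; the only difference is that the paper exploits that $\theta_H$ is a congruence and $(-)^{(ik)}$ a unary term (so $y\,\theta_H\,\e_i$ immediately gives $y^{(ik)}\,\theta_H\,\e_k$), whereas you compute explicitly that $(x^{(ik)})_i=x_k$, $(x^{(ik)})_k=x_i$, and $(x^{(ik)})_r=x_r$ for $r\neq i,k$, and then reapply Lemma~\ref{lem:boh}. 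That coordinate-swap identity is indeed a straightforward consequence of Lemma~\ref{sigma}(4) (the paper in fact proves the case $(x^{(ir)})_i=x_r$ inside Lemma~\ref{lem:boh}(a)), so your version is a minor, equally valid variant rather than a genuinely different route.
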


%
%
%
%
\begin{proof} We first prove $I(\theta_H)_k=H_k$, for all $k$.  
  Recall that $\e_j$ is the $i$-coordinate of $\e_i$ and $\e_i$ is the $k$-coordinate of $\e_i$ for every $k\neq i$. 

 (1)   First we provide the proof for $k=i$. Let $x\in I(\theta_H)_i$.
If $x\theta_H \e_i$ then $f_H(x_i)=f_H((\e_i)_i)=f_H(\e_j)$, that implies $x_i\in H^*$. By Lemma  \ref{lem:boh}(a) we get the conclusion $x\in H_i$.

For the converse, let $x\in H_i$.  By Lemma  \ref{lem:boh}(a)  we have $x_i\in H^*$ and by Lemma \ref{lem:boh}(b) $x_k\in H_*$ for all $k\neq i$.
This implies $f_H(x_i)=f_H(\e_j)=f_H((\e_i)_i)$ and $f_H(x_k)=f_H(\e_i)=f_H((\e_i)_k)$ for all $k\neq i$, that implies $x\theta_H \e_i$. Since  $I(\theta_H)_i=\e_i/\theta_H$, we conclude.

(2) Let now $k\neq i$. By Lemma \ref{lem:idperm} we have $H_k = H_i^{(ik)}$. Let $x\in H_k$. Then $x=y^{(ik)}$ for some $y\in H_i$.
As, by (1), $y\theta_H \e_i$, then we have $x=y^{(ik)}\theta_H (\e_i)^{(ik)}=\e_k$.
 Since  $I(\theta_H)_k=\e_k/\theta_H$, we conclude.
 Now, assuming $x\theta_H\e_k$, we have: $y = (x)^{(ik)}\theta_H (\e_k)^{(ik)} = \e_i$. Then $y\in H_i$ and $x=y^{(ik)}\in H_k$.

\bigskip

%

  Let $\phi$ be a congruence.

(a) Let $x\phi y$. Then $\forall h.\ x_h\phi y_h$. Since $\phi$ restricted to $B_{ij}$ is also a Boolean congruence, then we obtain $(x_h\oplus_{ij} y_h)\phi \e_i$, where $\oplus_{ij}$ denotes the symmetric difference in the Boolean center $B_{ij}$.
   We now prove that $x\theta_{I(\phi)}y$. We have 
  $x\theta_{I(\phi)} y$ iff $\forall h.\ f_{I(\phi)}(x_h)=f_{I(\phi)}(y_h)$ iff $\forall h.\ x_h\oplus_{ij} y_h\in I(\phi)_*=
  B_{ij}\cap \e_i/\phi$ iff $\forall h.\ x_h\oplus_{ij} y_h\in \e_i/\phi$ 
  iff $\forall h.\ (x_h\oplus_{ij} y_h) \phi \e_i$. This last relation is proved above and we conclude $x\theta_{I(\phi)}y$.

(b) Let $x\theta_{I(\phi)}y$. Then $\forall h.\ x_h\oplus_{ij} y_h\in \e_i/\phi$ that implies  $\forall h.\ x_h\phi y_h$, because $\phi$ restricted to $B_{ij}$ is a Boolean congruence.
  Since by Lemma \ref{lem:piu}(5) there is a $n$-ary term $u$ such that $x=u(x_1,\dots,x_n)$ and $y=u(y_1,\dots,y_n)$,  then we conclude $x\phi y$ by using $\forall h.\ x_h\phi y_h$.
\end{proof}

\subsection{Ultramultideals}
In the Boolean case, there is a bijective correspondence between maximal ideals and
homomorphisms onto $\mathbf 2$. In this section we show that every multideal can be extended
to an ultramultideal, and that there exists  a bijective correspondence between ultramultideals and homomorphisms onto $\mathbf n$. We also show that prime multideals coincide with ultramultideals.

Let $(I_1,\dots,I_n)$ be a multideal of a $n\mathrm{BA}$ $\mathbf A$ and  $U$ be a Boolean ultrafilter of $B_{ij}$ that extends $I^{*}=B_{ij}\cap I_j$, and so $\bar U = B_{ij}\setminus U$ extends $I_{*}=B_{ij}\cap I_i$.

\begin{lemma} \label{lem:fond}
For all $x\in A$,  there exists a unique $k$ such that $x_k\in U$.
\end{lemma}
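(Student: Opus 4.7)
The plan is to exploit the fact, established earlier in the paper, that the coordinates of any element form a Boolean partition of the top of $B_{ij}$, combined with the standard primeness property of Boolean ultrafilters.

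First, I would recall that each coordinate $x_k = t_k(x,\e_i,\e_j)$ belongs to $B_{ij}$: by Proposition \ref{lem:bool}, any element of the form $y_k$ is Boolean, so in particular $x_k \in B_{ij}$ for every $k$. Next, I would invoke Lemma \ref{lem:meet}(i) and (ii) to observe that, inside the Boolean algebra $B_{ij}$ (whose bottom is $\e_i$ and whose top is $\e_j$), the coordinates satisfy
\[
x_k \land_i x_r = \e_i \quad (k\neq r) \qquad\text{and}\qquad x_1 \bar\lor_i x_2 \bar\lor_i \dots \bar\lor_i x_n = \e_j.
\]
In Boolean terms, $\{x_1,\dots,x_n\}$ is a (possibly degenerate) partition of unity in $B_{ij}$.

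For existence, since $\e_j$ is the top element of $B_{ij}$ it lies in $U$; using that $U$ is an ultrafilter, hence prime, in the Boolean algebra $B_{ij}$, from $x_1 \bar\lor_i \cdots \bar\lor_i x_n = \e_j \in U$ I would conclude that at least one coordinate $x_k$ belongs to $U$. For uniqueness, suppose $x_k, x_r \in U$ with $k \neq r$; since $U$ is closed under meets, $x_k \land_i x_r \in U$, but by Lemma \ref{lem:meet}(i) this equals $\e_i$, the bottom of $B_{ij}$, contradicting the fact that the ultrafilter $U$ is proper.

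There is no substantial obstacle here: the whole content of the statement is the standard fact that in a Boolean algebra every ultrafilter selects exactly one element from any finite partition of unity, applied to the partition of $\e_j$ given by the coordinates. The only thing worth checking carefully is that the coordinates indeed lie in $B_{ij}$ and that their meets and joins computed with $\land_i, \bar\lor_i$ coincide with the Boolean operations of $B_{ij}$, which is precisely the content of the remarks following the definition of $B_{ij}$.
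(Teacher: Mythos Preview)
Your argument is correct and mirrors the paper's own proof: both use Lemma~\ref{lem:meet}(i) (pairwise meets are $\e_i$) for uniqueness and Lemma~\ref{lem:meet}(ii) (join of the coordinates is $\e_j$) for existence, the only cosmetic difference being that the paper phrases existence via the complement $\bar U$ being closed under joins rather than via primeness of $U$.
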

\begin{proof}
By Lemma \ref{lem:meet}(ii) the meet of two distinct coordinates is the bottom element $\e_i$. Then at most one coordinate may belong to $U$. On the other hand, if all coordinates belong to $\bar U$, then the top element $\e_j$ belong to $\bar U$. 
\end{proof}
Let $(G_k)_{k\in \hat n}$ be the sequence such that $G_k=\{x\in A: x_k\in U\}$, which, by Lemma \ref{lem:fond}, is well defined.

\begin{lemma}\label{lem:decisivo} $(G_{k})_{k\in \hat n}$ is an ultramultideal which extends $(I_{k})_{k\in \hat n}$.
\end{lemma}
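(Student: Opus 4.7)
The plan is to verify the three axioms (m1)--(m3) of Definition \ref{def:fide} for $(G_k)_{k\in\hat n}$; Lemma \ref{lem:fond} already gives that $(G_k)$ is an $n$-partition of all of $A$, so once it is proved to be a multideal, its carrier is automatically $A$, making it an ultramultideal. The extension claim $I_k\subseteq G_k$ for all $k$ is immediate: by Lemma \ref{lem:boh}(a), if $x\in I_k$ then $x_k\in I^{*}\subseteq U$, hence $x\in G_k$.

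The key computational tool is Lemma \ref{lem:meet}(iii), which expresses the $k$-coordinate of a $q$-composition purely in terms of the Boolean operations of $B_{ij}$:
\[
q(a,y^1,\dots,y^n)_k \;=\; (a_1\land_i (y^1)_k)\,\bar\lor_i\,\cdots\,\bar\lor_i\,(a_n\land_i (y^n)_k).
\]
Combining this with the ultrafilter/ideal duality between $U$ and $\bar U$ in $B_{ij}$, one observes the following pivotal fact: if $r$ denotes the unique index (Lemma \ref{lem:fond}) with $a_r\in U$, then the above join lies in $U$ if and only if $(y^r)_k\in U$. Indeed, for $s\neq r$, $a_s\in\bar U$ is in the ideal, so $a_s\land_i (y^s)_k\in\bar U$; the $r$-th summand $a_r\land_i (y^r)_k$ lies in $U$ exactly when $(y^r)_k\in U$ (intersection of two filter elements), and joining a $U$-element with $\bar U$-elements keeps the result in $U$, while joining only $\bar U$-elements keeps it in $\bar U$.

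With this observation in hand the three axioms are direct. For (m1): $(\e_k)_k = t_k(\e_k,\e_i,\e_j)=\e_j$ by (B0), and $\e_j\in U$ is the top of $B_{ij}$, hence $\e_k\in G_k$. For (m2): given $a\in G_r$, $b\in G_k$, and arbitrary $y_1,\dots,y_n\in A$ with $y_r=b$, the unique coordinate of $a$ lying in $U$ is $a_r$, so by the pivotal fact the $k$-coordinate of $q(a,y_1,\dots,y_{r-1},b,y_{r+1},\dots,y_n)$ is in $U$ iff $(y_r)_k=b_k\in U$, which holds since $b\in G_k$. For (m3): given arbitrary $a\in A$ and $y_1,\dots,y_n\in G_k$, let $r$ be the unique index with $a_r\in U$; then $(y^r)_k\in U$ because $y^r\in G_k$, so again the $k$-coordinate of $q(a,y_1,\dots,y_n)$ lies in $U$, i.e.\ $q(a,y_1,\dots,y_n)\in G_k$.

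I do not anticipate a real obstacle here; the whole argument reduces, via Lemma \ref{lem:meet}(iii), to the standard dichotomy provided by a Boolean ultrafilter applied to a disjoint-coordinates join. The only point requiring a modicum of care is making explicit that exactly one $a_s$ lies in $U$ (given by Lemma \ref{lem:fond}) and that the other summands are swallowed by $\bar U$, which is why (m2) and (m3) boil down to a single condition on the selected coordinate.
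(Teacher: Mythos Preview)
Your proposal is correct and follows essentially the same route as the paper: both use the coordinate formula of Lemma \ref{lem:meet}(iii) to reduce (m2) and (m3) to the ultrafilter property of $U$ in $B_{ij}$, and both invoke Lemma \ref{lem:boh}(a) for the extension $I_k\subseteq G_k$. The only cosmetic difference is that the paper argues (m2) directly from upward closure of $U$ (one summand $x_r\land_i y_k$ is already in $U$), whereas you package this as your ``pivotal fact'' giving an if-and-only-if; your version is slightly more than is needed but perfectly sound, and it makes (m3) an immediate corollary rather than a ``similarly'' gesture.
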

\begin{proof}
  (m1) $\e_k\in G_k$ because  $(\e_k)_k=\e_j\in U$.

  \noindent (m2): let $x\in G_r$, $y\in G_{k}$, and $z^1,\dots, z^n\in A$.
By Lemma \ref{lem:meet}(ii),  $$q(x,z^1,\ldots,z^{r-1},y,z^{r+1}\ldots,z^n)_{k} = [\overline{\bigvee}_{s\not=r}(x_s\land_i (z^s)_k)]\ \bar\lor_i\ (x_r\land_i y_k).$$
Since $x_r,y_k\in U$, then $x_r\land_i y_k\in U$, and so $x_r\land_i y_k\sqsubseteq [\overline{\bigvee}_{s\not=i}(x_s\land_i (z^s)_k)]\ \bar\lor_i\ (x_r\land_i y_k)\in U$. Hence, $q(x,z^1,\ldots,z^{r-1},y,z^{r+1}\ldots,z^n)\in G_k$.

\noindent (m3) can be proved similarly.

\noindent We now prove that $(G_{k})_{k\in \hat n}$ extends $(I_{k})_{k\in \hat n}$. It is sufficient
to show that, for every $x\in I_k$, we have that $x_k\in U$. We get the conclusion by Lemma \ref{lem:boh}(i).
\end{proof}

\begin{theorem}
  \begin{itemize}
  \item [(i)] Every multideal can be estended to an ultramultideal.
  \item  [(ii)] There is a bijective correspondence between ulramultideals and homomorphisms onto $\mathbf n$.
    \end{itemize}
    \end{theorem}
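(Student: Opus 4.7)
My plan for (i) is to observe that, given Lemma \ref{lem:decisivo}, it suffices to extend the Boolean filter $I^{*}=B_{ij}\cap I_j$ to a Boolean ultrafilter of $B_{ij}$. The preceding analysis already shows that $I^{*}$ is a Boolean filter; properness is the only thing I still need to check. If $\e_i\in I^{*}$ then $\e_i\in I_j$ with $i\neq j$, contradicting the disjointness of the components of a multideal (equivalently, by Lemma \ref{lem:top}, this would force $I=I^{\top}$, which is excluded from the genuine multideals). The Boolean prime filter theorem (a Zorn's lemma argument inside $B_{ij}$) then yields an ultrafilter $U\supseteq I^{*}$, and Lemma \ref{lem:decisivo} converts $U$ into an ultramultideal extending $(I_1,\dots,I_n)$.

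For (ii), I will build mutually inverse maps $\Phi$ and $\Psi$. Given an ultramultideal $G=(G_1,\dots,G_n)$, since the $G_k$'s form a partition of $A$ I can define $\Phi(G):\mathbf A\to\mathbf n$ by $\Phi(G)(x)=\e_k$ iff $x\in G_k$; axiom (m1) immediately gives $\Phi(G)(\e_k)=\e_k$. To check that $\Phi(G)$ preserves $q$, suppose $\Phi(G)(x)=\e_r$. Then $q^{\mathbf n}(\e_r,\Phi(G)(y_1),\dots,\Phi(G)(y_n))=\Phi(G)(y_r)$, while condition (m2), applied with $a=x\in G_r$ and $b=y_r\in G_{\Phi(G)(y_r)}$, forces $q(x,y_1,\dots,y_n)\in G_{\Phi(G)(y_r)}$. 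In the opposite direction, given a homomorphism $h:\mathbf A\to\mathbf n$, I set $\Psi(h)_k=h^{-1}(\e_k)$; the fact that $h$ preserves the constants yields (m1), while (m2) and (m3) reduce to direct evaluations of $q^{\mathbf n}$ on the relevant patterns of arguments. The identities $\Phi\circ\Psi=\mathrm{id}$ and $\Psi\circ\Phi=\mathrm{id}$ hold by inspection.

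I expect the main (really the only) obstacle to be the properness check for $I^{*}$ in part (i); once that is in place, Lemma \ref{lem:decisivo} does all the heavy work of transferring from Boolean ultrafilters of $B_{ij}$ to ultramultideals of $\mathbf A$. The rest is a routine unfolding of conditions (m1)--(m3) against the definition of $q^{\mathbf n}$, and the bijectivity of $\Phi$ and $\Psi$ is transparent from the fact that both $G_k$ and $h^{-1}(\e_k)$ record the same piece of data, namely ``$x$ belongs to the $k$-th class of the partition''.
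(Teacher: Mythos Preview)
Your argument is correct. For (i) you follow exactly the paper's route---invoke Lemma \ref{lem:decisivo} after extending $I^{*}$ to an ultrafilter of $B_{ij}$---and you are right to flag the properness of $I^{*}$, which the paper leaves implicit.

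For (ii) the two proofs diverge. The paper argues indirectly: by Theorem \ref{thm:main} multideals correspond bijectively to congruences, ultramultideals correspond to maximal proper congruences, and since $\mathbf n$ is the unique simple $n\mathrm{BA}$ (Theorem \ref{lem:subirr}), these in turn correspond to surjections onto $\mathbf n$. Your approach instead builds the bijection by hand, defining $\Phi(G)(x)=\e_k\Leftrightarrow x\in G_k$ and $\Psi(h)_k=h^{-1}(\e_k)$ and checking (m1)--(m3) directly against the defining equation $q^{\mathbf n}(\e_r,y_1,\dots,y_n)=y_r$. This is more elementary: it bypasses the whole machinery of coordinates, the Boolean center $B_{ij}$, and the multideal--congruence correspondence of Theorem \ref{thm:main}, and it does not need the classification of simple $n\mathrm{BA}$s. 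The paper's proof, on the other hand, situates (ii) as an immediate corollary of structural results already established, which is economical once those results are in place. Both arguments are complete; yours is self-contained, the paper's is conceptually tidier given the surrounding theory.
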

    \begin{proof}
(i) follows from lemma \ref{lem:decisivo}. Regarding (ii), we remark that the algebra $\mathbf n$ is the unique simple $n$BA.
      \end{proof}

We conclude this section by characterising prime multideals.

\begin{definition}
We say that  a multideal $(I_1,\dots,I_n)$ is \emph{prime} if $x\land_i y \in I_i$ implies $x\in I_i$ or $y\in I_i$.
\end{definition}

\begin{proposition} 
 A multideal is prime iff it is an ultramultideal.
\end{proposition}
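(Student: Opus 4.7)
I plan to prove the equivalence by treating the two directions separately, using the Boolean center $B_{ij}$ as a bridge between the Boolean theory of prime/maximal ideals and the $n$-ary structure of multideals.

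For the easy direction $(\Leftarrow)$, I will use the bijective correspondence between ultramultideals and homomorphisms onto $\mathbf n$ established just above. Given an ultramultideal corresponding to a homomorphism $h:\mathbf A\to \mathbf n$ with $I_k=h^{-1}(\e_k)$, a direct computation in $\mathbf n$ yields $\e_r\land_i\e_s = q(\e_r,\e_s/\bar i,\e_i/i)$, which equals $\e_s$ when $r\neq i$ and $\e_i$ when $r=i$. Thus $\e_r\land_i\e_s = \e_i$ if and only if $r=i$ or $s=i$. If $x\land_i y\in I_i$, then $h(x)\land_i h(y)=\e_i$ in $\mathbf n$, forcing $h(x)=\e_i$ or $h(y)=\e_i$, that is, $x\in I_i$ or $y\in I_i$.

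For the converse $(\Rightarrow)$, I will work with the associated Boolean ideal $I_* = B_{ij}\cap I_i$ of $B_{ij}$ (the nondegenerate case suffices, since $I^\top$ is trivially an ultramultideal). First I transfer primeness: if $x,y\in B_{ij}$ and $x\land_i y\in I_*$, then $x\land_i y\in I_i$, so primeness of the multideal gives $x\in I_i$ or $y\in I_i$; since $x,y\in B_{ij}$, this means $x\in I_*$ or $y\in I_*$. Hence $I_*$ is a prime ideal of the Boolean algebra $B_{ij}$, and so is maximal, making $I^*=B_{ij}\cap I_j$ a Boolean ultrafilter. Now for any $x\in A$, the argument of Lemma~\ref{lem:fond} applies with $U=I^*$: from $x_r\land_i x_s=\e_i\notin I^*$ for $r\neq s$ at most one coordinate lies in $I^*$, and from $x_1\bar\lor_i\dots\bar\lor_i x_n=\e_j\in I^*$ at least one does. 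Thus there is a unique $k$ with $x_k\in I^*$, and Lemma~\ref{lem:boh}(a) yields $x\in I_k$. Hence the carrier is all of $A$, so the multideal is an ultramultideal.

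The main obstacle will be verifying the primeness transfer to $I_*$ cleanly, which depends on the fact that for Boolean $x,y$ the meet $x\land_i y$ is still Boolean, so that membership in $I_*$ reduces to membership in $I_i$. Once this is in hand, the Boolean-algebraic fact that prime ideals in $B_{ij}$ are maximal does the heavy lifting, and Lemmas~\ref{lem:fond} and~\ref{lem:boh} reassemble the multideal from the ultrafilter~$I^*$.
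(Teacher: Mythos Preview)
Your proof is correct. The forward direction $(\Rightarrow)$ is essentially the paper's argument: both pass through the Boolean center $B_{ij}$, show that $I_* = B_{ij}\cap I_i$ is a maximal Boolean ideal (you show it is prime, the paper applies primeness directly to $x\land_i \neg_{ij}x = \e_i$; these are interchangeable in a Boolean algebra), and then use the ultrafilter $I^*$ together with Lemma~\ref{lem:boh}(a) to conclude that every element lands in some $I_k$. Your remark about needing $x\land_i y$ to be Boolean is unnecessary, by the way: the hypothesis $x\land_i y\in I_*\subseteq I_i$ already triggers primeness of the multideal, and it is the membership of $x,y$ in $B_{ij}$ that lets you conclude $x\in I_*$ or $y\in I_*$.

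For $(\Leftarrow)$ you take a genuinely different route. The paper argues internally: assuming $x\land_i y\in I_i$ with $x\in I_r$, $y\in I_k$ and $r,k\neq i$, axiom (m2) forces $x\land_i y = q(x,y/\bar i,\e_i/i)\in I_k$, contradicting disjointness of $I_i$ and $I_k$. You instead invoke the correspondence of ultramultideals with homomorphisms onto $\mathbf n$ and compute $\e_r\land_i\e_s$ there. Your approach is cleaner conceptually and reuses the theorem proved just before; the paper's is more self-contained and exposes exactly which multideal axiom does the work.
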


\begin{proof} ($\Rightarrow$) Let $(I_1,\dots,I_n)$ be a prime ideal. If $x\in B_{ij}$, then $x\land_i\neg_{ij}(x)=\e_i\in I_i$. Then either $x$ or $\neg_{ij}(x)\in I_i$. This implies that $I_*=B_{ij}\cap I_i$ is a maximal Boolean ideal and the complement $I^*=B_{ij}\cap I_j$ is a Boolean ultrafilter. 

\noindent Let now $y\in A$ such that $y\notin I=\bigcup_{k=1}^n I_k$.  By Lemma \ref{lem:boh}(a) we have that 
$y\in I_r$ iff $y_r\in I^*$. 
Then $y_r\notin I^*$ for all $r$. Since $I^*$ is a Boolean ultrafilter, then $y_r\in I_*$ for all $r$. Hence $\e_j= \bigvee_{r=1}^n y_r \in I_*$, contradicting the fact that the top element does not belong to a maximal ideal.
In conclusion, $y\in I=\bigcup_{k=1}^n I_k$ for an arbitrary $y$, so that $I=A$.

($\Leftarrow$) Let $I$ be an ultramultideal. Let $x\land_i y\in I_i$ with $x\in I_r$ and $y\in I_k$ (with $r\neq i$ and $k\neq i$). As $q(\e_j,\e_k,\dots,\e_k,\e_i,\e_k,\dots,\e_k)=\e_k$ ($\e_i$ at position $i$), then by property (m2) of multideals we get $q(x,y,\dots,y,\e_i,y,\dots,y)=x\land_i y\in I_k$. Contradiction.
\end{proof}



\section*{Conclusion}
Boolean-like algebras have been introduced in \cite{first,BLPS18} as a generalisation of Boolean algebras to any finite number of truth values. Boolean-like algebras provide a new characterisation of primal varieties exhibiting a perfect symmetry of the values of the generator of the variety. This feature has been used in \cite{BLPS18} to define a $n$-valued propositional logic, where the truth values play perfectly symmetric roles, allowing an encoding of any tabular logic.
 
In this paper we have investigated the relationships between skew Boolean algebras and  Boolean-like algebras. 
We have shown that any $n$-dimensional Boolean-like algebra is a cluster of $n$ isomorphic right-handed skew Boolean algebras, and that the variety of  skew star algebras is  term equivalent to the variety of Boolean-like algebras. Moreover, we have got a representation theorem for right-handed skew Boolean algebras, and
developed a general theory of multideals for Boolean-like algebras.
  Several further works are worth mentioning:
  \begin{itemize}
       \item How the duality theory of skew BAs and  BAs
    are related to a possible duality theory of $n$BAs (a Stone-like topology on  ultramultideals).
\item Provide the proof theory of the logic $n$CL, whose equivalent algebraic semantics is the variety of $n$BAs.
\item Find a more satisfactory axiomatisation of skew star algebras.
\item Each skew BA living inside a $n$BA has a bottom element 0 and
  several maximal elements.  The construction could be made symmetric,
 by defining ``skew-like'' algebras having several minimal and several
  maximal elements. 
  \end{itemize}


\begin{thebibliography}{99}



\bibitem{BKCV} Bauer, A., Cvetko-Vah, K., \textquotedblleft Stone duality for skew Boolean algebras with intersections", \emph{Houston J. Math.}, 39, 2013,  73-–109.




\bibitem {BL} Bignall R.J., Leech J., \textquotedblleft Skew Boolean algebras and discriminator varieties", \emph{Algebra Universalis}, 33, 1995, pp. 387-398.


\bibitem{BLPS18} Bucciarelli A., Ledda A., Paoli F., Salibra A., ``Boolean-like algebras of finite dimension'', Preprint, 2018, arXiv:1806.06537 [cs.LO].


\bibitem{CS15} Cvetko-Vah K., Salibra A., ``The connection of skew Boolean algebras and discriminator varieties to Church algebras'', Algebra Universalis, 73, 2015, pp. 369-390.



\bibitem{BS} Burris S.N., Sankappanavar H.P., \emph{A Course in Universal Algebra}, Springer, Berlin, 1981.




\bibitem {CVLS13} Cvetko-Vah K., Leech J., Spinks M., ``Skew lattices and binary operations on functions", \emph{Journal of Applied Logic},  11(3), 2013, pp. 253--265.







\bibitem{GL} Gudder S.,  Latr\'emoli\`ere F., ``Boolean inner-product spaces and Boolean matrices'', \emph{Linear Algebra and its Applications}, 431, 2009, pp. 274--296. 

\bibitem{Kudr} Kudryavtseva, G.
\textquotedblleft A refinement of Stone duality to skew Boolean algebras", \emph{Algebra Universalis}, 67, 2012,  397--€"416. 

\bibitem{G14} Gudder S., ``Boolean vector spaces'', Preprint, 2014. 








%


\bibitem {LPS13} Ledda A., Paoli F.,  Salibra A., ``On Semi-Boolean-like algebras", \emph{Acta Univ. Palacki. Olomuc., Fac. rer. nat., Mathematica}, 52(1), 2013, pp. 101--120.

\bibitem{Leech} Leech J., ``Skew Boolean algebras'' {\it Alg. Universalis} 27,  1990, pp. 497--506.

\bibitem {Lee89}Leech J., \textquotedblleft Skew lattices in rings", \emph{Algebra Universalis}, 26, 1989, pp. 48--72.
\bibitem {Lee}Leech J., \textquotedblleft Recent developments in the theory of
skew lattices", \emph{Semigroup Forum}, 52, 1996, pp. 7-24.


\bibitem{MS08} Manzonetto G., Salibra A., 
``From lambda calculus to universal algebra and back''. in: \emph{33th Int. Symp. on Math. Found. of Computer Science}, LNCS 5162, 2008, pp. 479--490.


\bibitem{MMT87} McKenzie R.N., McNulty G.F., Taylor W.F., \emph{Algebras, Lattices, Varieties, Volume I}, Wadsworth Brooks, Monterey, CA, 1987.





%
\bibitem{first} Salibra A., Ledda A., Paoli F., Kowalski T., ``Boolean-like
algebras'', \emph{Alg. Universalis}, 69(2), 2013, pp. 113--138.


\bibitem{SLP18} Salibra A. ,  Ledda A.,  Paoli F., ``Boolean product representations of algebras via binary polynomials'', In J. Czelakowski (Ed.), \emph{Don Pigozzi on Abstract Algebraic Logic, Universal Algebra, and Computer Science}, Outstanding Contributions to Logic Vol. 16, Springer-Verlag, 2018, pp. 297--321. 

\bibitem {Spi}Spinks M., \emph{On the Theory of Pre-BCK Algebras}, PhD Thesis,
Monash University, 2003.







\bibitem{vaggione} Vaggione D., ``Varieties in which the Pierce stalks are directly indecomposable'', \emph{Journal of Algebra}, 184, 1996, pp. 424--434.










\end{thebibliography}
\end{document}